%
%
\documentclass[12pt]{amsart}
\usepackage{amsmath,amsthm,amsfonts,latexsym,amssymb}
\usepackage{graphicx}
\usepackage{mathrsfs}
\input xy
\xyoption{all}

\makeatletter
\providecommand*{\Dashv}{%
  \mathrel{%
    \mathpalette\@Dashv\vDash
  }%
}
\newcommand*{\@Dashv}[2]{%
  \reflectbox{$\m@th#1#2$}%
}
\makeatother

\pagestyle{headings}
\setlength{\textwidth}{42true pc}
\setlength{\headheight}{8true pt} 
\setlength{\oddsidemargin}{0 truept}
\setlength{\evensidemargin}{0 truept}
\setlength{\textheight}{580true pt}

%
%
%
%
\theoremstyle{theorem}
\newtheorem{theorem}{Theorem}
\newtheorem{corollary}{Corollary}
\newtheorem{lemma}{Lemma}
\newtheorem{proposition}{Proposition}

\long\def\symbolfootnote[#1]#2{\begingroup%
\def\thefootnote{\fnsymbol{footnote}}\footnote[#1]{#2}\endgroup}

\theoremstyle{definition}
\newtheorem{definition}{Definition}
\newtheorem{remark}{Remark}
\newtheorem{question}{Question}
\newtheorem{fact}{Fact}
\newtheorem{observation}{Observation}

\newtheorem{conjecture}{Conjecture}

\numberwithin{equation}{section}
%
%
%
%
%

%
%

\newcommand{\so}{\mathbf{s}}
\newcommand{\ra}{\mathbf{r}}
\newcommand{\Spec}{\mathrm{Spec}}
\newcommand{\pth}{\mathrm{Path}}
\newcommand{\AT}{\mathcal{A}}
\newcommand{\N}{\mathbb{N}}
\newcommand{\Q}{\mathbb{Q}}
\newcommand{\R}{\mathbb{R}}

\usepackage{fancyhdr}
\pagestyle{fancy}
\lhead{An infinite cardinal-valued Krull dimension for rings}
\rhead{Loper, Mesyan, \& Oman}

\begin{document}

\title[An infinite cardinal-valued Krull dimension for rings]
{An infinite cardinal-valued Krull dimension for rings}

\author{K. Alan Loper}
\address[K. Alan Loper]{Department of Mathematics\\
The Ohio State University\\
Newark, OH 43055\\
USA}
\email{lopera@math.ohio-state.edu}

\author{Zachary Mesyan}
\address[Zachary Mesyan]{Department of Mathematics\\
University of Colorado\\
Colorado Springs, CO 80918\\
USA}
\email{zmesyan@uccs.edu}

\author{Greg Oman}
\address[Greg Oman]{Department of Mathematics\\
University of Colorado\\
Colorado Springs, CO 80918\\
USA}
\email{goman@uccs.edu}

\symbolfootnote[0]{\emph{2010 Mathematics Subject Classification.} Primary: 13A15; Secondary: 03E10.}
\symbolfootnote[0]{\emph{Key Words and Phrases.} cofinality, Generalized Continuum Hypothesis, Krull dimension, Leavitt path algebra, Noetherian ring, ordered set, polynomial ring, valuation ring.}

\begin{abstract} We define and study two generalizations of the Krull dimension for rings, which can assume cardinal number values of arbitrary size. The first, which we call the \emph{cardinal Krull dimension}, is the supremum of the cardinalities of chains of prime ideals in the ring. The second, which we call the \emph{strong cardinal Krull dimension}, is a slight strengthening of the first. Our main objective is to address the following question: for which cardinal pairs $(\kappa,\lambda)$ does there exist a ring of cardinality $\kappa$ and (strong) cardinal Krull dimension $\lambda$? Relying on results from the literature, we answer this question completely in the case where $\kappa\geq\lambda$. We also give several constructions, utilizing valuation rings, polynomial rings, and Leavitt path algebras, of rings having cardinality $\kappa$ and (strong) cardinal Krull dimension $\lambda>\kappa$. The exact values of $\kappa$ and $\lambda$ that occur in this situation depend on set-theoretic assumptions. \end{abstract}

\maketitle

\section{Introduction}

Before motivating this article, we fix some terminology. All rings will be assumed to be associative (though not commutative unless stated explicitly), with $1\neq 0$ unless noted otherwise, and all extensions of unital rings will be unital. 

Recall that a collection $\mathcal{C}$ of sets is a \emph{chain} if for any $A,B\in\mathcal{C}$, either $A\subseteq B$ or $B\subseteq A$. Next, if $R$ is a ring and $n$ is a non-negative integer, then a chain $P_0\subsetneq P_1\subsetneq\cdots P_n$ of prime ideals of $R$ is said to have \emph{length} $n$.\footnote{The length measures the number of \emph{containments}, not the number of elements in the chain.} Now let $\kappa$ be an infinite cardinal and suppose that $\mathcal{C}$ is a chain of $\kappa$ many prime ideals of $R$. Then we say that $\mathcal{C}$ has \emph{length} $\kappa$. If $\mathcal{C}$ is any nonempty chain of prime ideals of $R$, then we denote the length of $\mathcal{C}$ by $\ell(\mathcal{C})$. 

The \emph{Krull dimension} of a commutative ring $R$ is often defined to be $n$ if $n$ is the largest non-negative integer for which there is a chain of prime ideals of $R$ of length $n$. If no such $n$ exists, it is common for the Krull dimension of $R$ to be defined to be simply ``$\infty$" (see \cite{RG}, pp. 105--106 and \cite{LAM}, p. 65). We introduce a more stringent definition.

\begin{definition}\label{def1} Let $R$ be a ring, and let $\kappa:=\sup\{\ell(\mathcal{C})\mid\mathcal{C}~\textrm{a~nonempty~chain~of~prime~ideals~of}~R\}$. We call the cardinal $\kappa$ the \emph{cardinal Krull dimension} of $R$, and we write $\mathrm{c.dim}(R)=\kappa$. \end{definition}

\begin{remark}\label{zremark} Various other infinite-valued generalizations of the Krull dimension for commutative rings appear in the literature. A particularly well-studied one, called the \emph{classical Krull dimension}, was introduced by Krause in \cite{Krause}, and assumes ordinal numbers as values; see also \cite{KG}, Chapter 12, for a discussion. Another ordinal-theoretic generalization, called the \emph{little classical Krull dimension}, was introduced by Gordon and Robson in \cite{JR}. Their definition is similar to ours, except only well-ordered chains of prime ideals are considered, and the ``length" of a chain is defined in terms of the order type of the chain. 

We also refer the reader to \cite{KG}, pp. 221 and 239, for a historical overview of the Krull dimension. To help with expositional clarity, throughout this article, we shall always append one of two phrases (the second of which to be defined shortly) to ``Krull dimension" when there is danger of confusion.\end{remark}

\noindent We pause to take note of the following immediate consequence of the previous definition:

\begin{observation}\label{obs1} Let $R$ be a ring. Then $\mathrm{c.dim(R)}\leq2^{|R|}$. \end{observation}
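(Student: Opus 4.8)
The plan is to bound the cardinal Krull dimension by bounding the number of prime ideals, since any chain of primes is in particular a set of primes. Every prime ideal of $R$ is a subset of $R$, and the collection of all subsets of $R$ has cardinality $2^{|R|}$. Hence the set of all prime ideals of $R$ has cardinality at most $2^{|R|}$, and so does any chain of prime ideals.

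First I would recall that the length $\ell(\mathcal{C})$ of a chain $\mathcal{C}$ is, by the definitions given, controlled by the cardinality of $\mathcal{C}$: for a finite chain the length $n$ is one less than the number of elements, and for an infinite chain of $\kappa$ many primes the length is exactly $\kappa$. In either case $\ell(\mathcal{C}) \leq |\mathcal{C}|$. Since $\mathcal{C}$ is a collection of distinct prime ideals, and each prime ideal is an element of the power set $\mathcal{P}(R)$, we have $|\mathcal{C}| \leq |\mathcal{P}(R)| = 2^{|R|}$.

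Next I would take the supremum. For every nonempty chain $\mathcal{C}$ of prime ideals we have shown $\ell(\mathcal{C}) \leq 2^{|R|}$. Therefore the cardinal $\mathrm{c.dim}(R) = \sup\{\ell(\mathcal{C}) \mid \mathcal{C} \text{ a nonempty chain of primes}\}$ is a supremum of cardinals each bounded by $2^{|R|}$, whence $\mathrm{c.dim}(R) \leq 2^{|R|}$.

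I do not anticipate a genuine obstacle here, as the statement is essentially bookkeeping. The only mild subtlety worth flagging is the finite case: when all chains are finite, one should note that the lengths are natural numbers and hence bounded above by $\aleph_0 \leq 2^{|R|}$ (using that $R$ is infinite, or treating the finite-ring case separately), so that taking the supremum still yields a cardinal not exceeding $2^{|R|}$. Otherwise the argument is a direct appeal to the fact that prime ideals, being subsets of $R$, cannot number more than $2^{|R|}$.
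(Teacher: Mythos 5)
Your proof is correct and is precisely the standard counting argument the paper has in mind: it states this observation without proof as an ``immediate consequence'' of Definition \ref{def1}, namely that any chain of primes lies in $\mathcal{P}(R)$, so $\ell(\mathcal{C})\leq|\mathcal{C}|\leq 2^{|R|}$, and the supremum inherits the bound. Your handling of the finite-length and finite-ring cases is sound, so nothing is missing.
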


\noindent The concept of finite Krull dimension (that is, the supremum of the lengths of chains of prime ideals is finite) is ubiquitous both in algebraic geometry and the ideal theory of commutative rings. As a sampling, we record several well-known results (we refer the reader to the texts \cite{DE}, \cite{RG}, and \cite{RH} for details).

\begin{fact}\label{fact1} Let $R$ be a commutative ring. Then the following hold:
\begin{enumerate}
\item $R$ is Artinian if and only if $R$ is Noetherian with Krull dimension $0$. 
\item If $R$ is Noetherian of Krull dimension $m\in\mathbb{N}$ and $X_1,\ldots,X_n$ are indeterminates, then the polynomial ring $R[X_1,\ldots,X_n]$ has Krull dimension $m+n$. 
\item $R$ is a Dedekind domain (which is not a field) if and only if $R$ is a Noetherian, integrally closed domain with Krull dimension $1$. 
\item Suppose $R$ is a domain which is not a field with quotient field $K$. Then there are no rings properly between $R$ and $K$ if and only if $R$ is a valuation ring with Krull dimension $1$.
\end{enumerate}
\end{fact}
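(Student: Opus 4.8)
Because these four statements are standard results assembled from the literature, my plan is to indicate for each the structural theorem it rests on rather than to reproduce the textbook arguments in full; the uniform theme is that each assertion is governed entirely by the prime spectrum.

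For part (1), if $R$ is Artinian then for any prime $P$ the quotient $R/P$ is an Artinian domain, and stabilization of the chain $(a)\supseteq(a^2)\supseteq\cdots$ forces every nonzero $a$ to be a unit, so $R/P$ is a field; hence every prime is maximal and $R$ has Krull dimension $0$. That $R$ is also Noetherian is the commutative instance of the Hopkins--Levitzki theorem, which I would derive from nilpotency of the Jacobson radical together with the fact that each radical layer is a finite product of fields, giving $R$ a finite composition series. Conversely, a Noetherian ring of dimension $0$ has every prime maximal; since it has only finitely many minimal primes and a nilpotent nilradical, it again has finite length and is therefore Artinian.

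For part (2), I would induct on $n$ and prove $\dim R[X]=\dim R+1$ for Noetherian $R$. The lower bound is elementary: a chain $P_0\subsetneq\cdots\subsetneq P_m$ in $R$ lifts to $P_0R[X]\subsetneq\cdots\subsetneq P_mR[X]\subsetneq P_mR[X]+XR[X]$, the last inclusion being a proper containment of primes. The upper bound $\dim R[X]\le\dim R+1$ is where I expect the main difficulty to lie: for a prime $Q$ of $R[X]$ contracting to $P$ in $R$ one must show $\operatorname{ht}(Q)\le\operatorname{ht}(P)+1$. I would obtain this by passing to the fibre over $P$ --- localizing at $P$ and reducing modulo $P$ to reach a polynomial ring in one variable over a field, which has dimension $1$ --- and combining this with Krull's principal ideal theorem to control heights in the Noetherian setting.

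For part (3), I would use the local characterization of discrete valuation rings: a one-dimensional Noetherian local domain is integrally closed if and only if it is a DVR. Given a Dedekind domain, invertibility of ideals makes it Noetherian, and each localization at a maximal ideal is a DVR, so $R$ is one-dimensional and (being the intersection of its integrally closed localizations) integrally closed; conversely, if $R$ is a Noetherian integrally closed domain of dimension $1$, then each $R_{\mathfrak m}$ is a one-dimensional Noetherian integrally closed local domain, hence a DVR, so $R$ is locally a DVR and therefore Dedekind. For part (4), the backward direction follows because the overrings of a valuation ring correspond to its primes, and a rank-one valuation ring has only the primes $0$ and $\mathfrak m$, leaving $R$ and $K$ as its only overrings. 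For the forward direction the crucial observation is that if some $x\in K$ had both $x\notin R$ and $x^{-1}\notin R$, then $R[x]=R[x^{-1}]=K$ by hypothesis, and clearing denominators in the resulting expressions for $x$ and $x^{-1}$ yields monic integral relations; thus $K$ would be integral over $R$, forcing the domain $R$ to be a field and contradicting our assumption. Hence $R$ is a valuation ring, and the absence of intermediate rings again rules out any prime strictly between $0$ and $\mathfrak m$, so $\dim R=1$.
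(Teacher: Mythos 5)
Your proposal is correct and is consistent with the paper's treatment: the paper records Fact \ref{fact1} without proof, explicitly deferring to the standard texts (Eisenbud, Gilmer, Hartshorne), and your sketches are precisely the classical arguments found there --- the Akizuki/Hopkins finite-length argument for (1), the lifted chain plus the fibre-over-$P$ and Krull height theorem bounds for (2) (where, as you rightly flag, the Noetherian hypothesis is needed to get $\mathrm{ht}(PR[X])=\mathrm{ht}(P)$, since the fibre argument alone only yields $\dim R[X]\leq 2\dim R+1$), the local DVR characterization for (3), and the classical trick that $x\notin R$ together with $R[x]=K$ forces $x^{-1}$ to satisfy a monic relation over $R$ for (4). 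There is nothing to correct.
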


A natural question is whether the cardinal Krull dimension of a ring $R$ can always be realized by a chain of prime ideals. In other words, if $R$ is a ring and $\mathrm{c.dim}(R)=\kappa$, is it the case that there exists a chain of prime ideals of $R$ of length $\kappa$? The answer turns out to be ``no". To see this, recall that every chain of prime ideals of a commutative Noetherian ring is finite (This follows from the well-known fact that every such ring satisfies the descending chain condition on prime ideals. Thus a nonempty subset of a chain of prime ideals in a commutative Noetherian ring must have a least and a greatest element. Furthermore, a totally ordered set with the property that every nonempty subset has both a greatest and a least element is necessarily finite.). In contrast to this fact, Nagata showed in 1962 (\cite{MN}) that there exist commutative Noetherian rings with chains of prime ideals of arbitrarily long finite length. Thus there exist rings $R$ for which $\mathrm{c.dim}(R)=\aleph_0$, yet there is no chain of prime ideals of $R$ of length $\aleph_0$. Later in the paper we establish, for every limit cardinal $\kappa$, the existence of a ring of cardinal Krull dimension $\kappa$ but without a chain of prime ideals of length $\kappa$ (see Proposition \ref{berry1}). In light of these facts, we make the following definition:

\begin{definition}\label{def00} Let $R$ be a ring. Say that $R$ has \emph{strong cardinal Krull dimension} $\kappa$ provided $\mathrm{c.dim}(R)=\kappa$ and there exists a chain of prime ideals of $R$ of length $\kappa$. If this is the case, we write $\mathrm{sc.dim}(R)=\kappa$. \end{definition}

Our comments above demonstrate that while the cardinal Krull dimension of any ring always exists, there are rings which do not have strong cardinal Krull dimension. Nevertheless, often the two notions coincide. We record observations toward this end below. The proofs are straightforward; as such, they are omitted.

\begin{observation}\label{obs0} Let $R$ be a ring. Then the following hold:
\begin{enumerate}
\item If $R$ has finite cardinal Krull dimension, then $\mathrm{sc.dim}(R)$ exists. More generally, 
\item if $\mathrm{c.dim}(R)$ is a successor cardinal, then $\mathrm{sc.dim}(R)$ exists. 
\item If the prime ideals of $R$ form a chain with respect to set inclusion, then $\mathrm{sc.dim}(R)$ exists. 
\end{enumerate}
\end{observation}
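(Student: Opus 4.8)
The plan is to exploit the single fact underlying all three parts: the cardinal Krull dimension is defined as a supremum, so $\mathrm{sc.dim}(R)$ exists precisely when that supremum is \emph{attained} by some chain. Thus in each case I would argue that the relevant hypothesis forces the supremum to be realized. I would first record that $R$ always has at least one prime ideal: by Zorn's Lemma $R$ possesses a maximal two-sided ideal $M$, and since $R/M$ is then a simple ring (hence prime), $M$ is a prime ideal. Consequently the supremum in Definition \ref{def1} is taken over a nonempty family, and there is always a chain of length $0$.

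For parts (1) and (2) the key point is that a successor cardinal cannot be the supremum of a set of cardinals all strictly below it. Concretely, I would suppose $\mathrm{c.dim}(R)=\lambda$ where $\lambda=\mu^+$ is a successor cardinal, and suppose toward a contradiction that no chain of prime ideals has length $\lambda$. Then every chain $\mathcal{C}$ satisfies $\ell(\mathcal{C})<\lambda$; since the cardinals below $\mu^+$ are exactly those $\le\mu$, this gives $\ell(\mathcal{C})\le\mu$ for every $\mathcal{C}$, whence $\mathrm{c.dim}(R)=\sup_{\mathcal{C}}\ell(\mathcal{C})\le\mu<\lambda$, a contradiction. Hence a chain of length $\lambda$ exists and $\mathrm{sc.dim}(R)=\lambda$. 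This proves (2), and it also proves (1) when $\mathrm{c.dim}(R)$ is a positive integer (a positive integer being a successor cardinal); the remaining case $\mathrm{c.dim}(R)=0$ is covered by the existence of a prime ideal noted above. Alternatively, for (1) one can simply observe that the supremum of a set of natural numbers, if finite, is a maximum.

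For part (3), let $\mathcal{C}_0$ denote the set of \emph{all} prime ideals of $R$, which by hypothesis is itself a chain. I claim $\mathcal{C}_0$ realizes the supremum. Indeed, every chain $\mathcal{C}$ of prime ideals is a subset of $\mathcal{C}_0$, and one checks directly from the definition of length that $\mathcal{C}\subseteq\mathcal{C}_0$ implies $\ell(\mathcal{C})\le\ell(\mathcal{C}_0)$: when $\mathcal{C}_0$ is infinite this is the inequality $|\mathcal{C}|\le|\mathcal{C}_0|$ (using that length equals cardinality for infinite chains), and when $\mathcal{C}_0$ is finite it is the inequality $|\mathcal{C}|-1\le|\mathcal{C}_0|-1$. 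Therefore $\mathrm{c.dim}(R)=\sup_{\mathcal{C}}\ell(\mathcal{C})=\ell(\mathcal{C}_0)$, so the supremum is attained and $\mathrm{sc.dim}(R)=\ell(\mathcal{C}_0)$ exists.

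The arguments are short, and the only genuine subtlety — the ``main obstacle'' — is the bookkeeping around the definition of length, which treats finite and infinite chains differently (length $n$ for an $(n+1)$-element chain versus length $\kappa$ for an infinite chain of cardinality $\kappa$). One must confirm that the monotonicity $\ell(\mathcal{C})\le\ell(\mathcal{C}_0)$ in part (3) and the bound $\ell(\mathcal{C})\le\mu$ in part (2) remain valid across this finite/infinite divide; once that is verified, the set-theoretic input (that a successor cardinal is not a limit of smaller cardinals) does the rest.
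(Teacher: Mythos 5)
Your proof is correct; in fact the paper omits any proof of Observation \ref{obs0}, remarking only that the arguments are straightforward, and yours are precisely the intended ones: a successor cardinal (which, under the paper's definition, includes every positive integer) cannot be the supremum of a set of strictly smaller cardinals, the length-$0$ chain needed for $\mathrm{c.dim}(R)=0$ is furnished by a maximal (hence prime) ideal, and in part (3) the set of all primes is itself a chain whose length dominates that of every subchain across the finite/infinite length convention. Nothing further is needed.
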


Aside from Nagata's example, another impetus for the definitions and results presented in this note stems from the recent paper of Kang and Park (\cite{BK}) which proves that the power series ring (in one variable) over a nondiscrete valuation ring has uncountable chains of prime ideals. Additionally, we mention the dissertation of Bailey (\cite{AB}), in which he shows that it is consistent with ZFC that for every infinite cardinal $\kappa$, there is a valuation ring of cardinality $\kappa$ with a chain of $2^\kappa$ prime ideals. We record the related observation communicated to the first author by Jim Coykendall below. The result seems to have been known since antiquity and can be found, for example, on p. 1501 of \cite{DD2}. The proof we give is the standard one.

\begin{proposition}[folklore]\label{prop1} Let $F$ be a countable field, and let $X_1,X_2,X_3,\ldots$ be indeterminates. For every positive integer $n$, set $R_n:=F[X_1,\ldots,X_n]$, and let $R_\infty:=F[X_1,X_2,X_3,\ldots]$. Then 

\begin{enumerate}
\item $\mathrm{sc.dim}(R_n)=n$ for every integer $n>0$, yet 
\item $\mathrm{sc.dim}(R_\infty)=2^{\aleph_0}$. 
\end{enumerate}
\end{proposition}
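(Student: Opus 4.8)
The plan is to treat the two parts separately, the first being a routine application of classical dimension theory and the second requiring an explicit construction of a very long chain of primes.

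For part (1), I would invoke Fact \ref{fact1}(2): since $F$ is a field, it is Noetherian of Krull dimension $0$, so $R_n = F[X_1,\ldots,X_n]$ is Noetherian of (classical) Krull dimension $n$. Because every chain of prime ideals in a commutative Noetherian ring is finite, the supremum defining $\mathrm{c.dim}(R_n)$ is a supremum of natural numbers bounded by $n$, whence $\mathrm{c.dim}(R_n) = n$. This value is attained by the explicit chain
$$(0) \subsetneq (X_1) \subsetneq (X_1,X_2) \subsetneq \cdots \subsetneq (X_1,\ldots,X_n),$$
each term being prime because $R_n/(X_1,\ldots,X_k) \cong F[X_{k+1},\ldots,X_n]$ is a domain. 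Hence $\mathrm{sc.dim}(R_n) = n$ (cf.\ Observation \ref{obs0}(1)).

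For part (2), the upper bound is immediate: $F$ is countable and the monomials of $R_\infty$ are indexed by the countable set of finitely supported functions $\mathbb{N}\to\mathbb{N}$, so $|R_\infty| = \aleph_0$, and Observation \ref{obs1} gives $\mathrm{c.dim}(R_\infty) \le 2^{\aleph_0}$. The substance of the proof is to produce a chain of prime ideals of length $2^{\aleph_0}$; this simultaneously establishes the reverse inequality and exhibits the dimension as realized, yielding $\mathrm{sc.dim}(R_\infty) = 2^{\aleph_0}$. My plan is to reindex the (countably many) variables by the rationals, writing $R_\infty = F[X_q : q \in \mathbb{Q}]$, and to attach to each \emph{down-set} $D \subseteq \mathbb{Q}$ (a subset closed downward under $\le$) the monomial ideal $P_D := (X_q : q \in \mathbb{Q}\setminus D)$. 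Each $P_D$ is prime since $R_\infty / P_D \cong F[X_q : q \in D]$ is a domain.

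The key facts I would then verify are: (i) the down-sets of the totally ordered set $\mathbb{Q}$ are themselves totally ordered by inclusion (if $D_1 \not\subseteq D_2$, pick $x \in D_1\setminus D_2$; any $y \in D_2$ satisfies $y \le x$, else $x \in D_2$, so $y \in D_1$ and $D_2 \subseteq D_1$); (ii) the ideals $P_D$ consequently form a chain, since $D_1 \subseteq D_2$ forces $\mathbb{Q}\setminus D_2 \subseteq \mathbb{Q}\setminus D_1$ and hence $P_{D_2}\subseteq P_{D_1}$; (iii) the assignment $D \mapsto P_D$ is injective, because if $D_1 \subsetneq D_2$ and $q \in D_2\setminus D_1$ then $X_q \in P_{D_1}\setminus P_{D_2}$; and (iv) there are exactly $2^{\aleph_0}$ down-sets of $\mathbb{Q}$, the lower bound coming from the distinct down-sets $\{q : q < r\}$ for $r \in \mathbb{R}$ (distinct since $\mathbb{Q}$ is dense) and the upper bound from $2^{|\mathbb{Q}|} = 2^{\aleph_0}$. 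Together these produce a chain $\{P_D\}$ of $2^{\aleph_0}$ distinct prime ideals, completing the argument. I expect the only genuinely delicate points to be the chain property (i)--(ii) and the distinctness (iii); everything else reduces to the classical machinery and the cardinality bound already recorded in the excerpt.
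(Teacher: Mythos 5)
Your proposal is correct and takes essentially the same approach as the paper: part (1) via Fact \ref{fact1} and Observation \ref{obs0}, and part (2) by re-indexing the variables by $\mathbb{Q}$ and exploiting the density of $\mathbb{Q}$ in $\mathbb{R}$ to produce a chain of $2^{\aleph_0}$ variable-generated prime ideals, with Observation \ref{obs1} supplying the matching upper bound. Your use of arbitrary down-sets $D$ (with $P_D$ generated by the complementary variables) instead of the paper's cuts $(-\infty,r)\cap\mathbb{Q}$ for $r\in\mathbb{R}$ is only a cosmetic variation of the same construction.
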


\begin{proof} Assume that $F$, $R_n$, and $R_\infty$ are as stated above. 

\vspace{.1 in}

(1) The assertion follows immediately from (1) and (2) of Fact \ref{fact1} along with (1) of Observation \ref{obs0}. 

\vspace{.1 in}

(2) Because $|\mathbb{Z}^+|=|\mathbb{Q}|$, we may re-enumerate the variables with respect to $\mathbb{Q}$. Hence we may assume that $R_\infty=F[X_q\mid q\in\mathbb{Q}]$. Now, for every real number $r$, let $P_r:=\langle X_i\mid i\in(-\infty,r)\cap\mathbb{Q}\rangle$. It is clear that each $P_r$ is a prime ideal of $R_\infty$ and that, moreover, $P_r\subsetneq P_s$ whenever $r,s\in\mathbb{R}$ with $r<s$. Setting $\mathcal{C}:=\{P_r\mid r\in\mathbb{R}\}$, we see that 

\begin{equation}\label{eq1}
\mathcal{C}~\mathrm{is~a~chain~of~prime~ideals~of}~R_\infty~\mathrm{of~cardinality}~2^{\aleph_0}.
\end{equation} 

\noindent Applying Observation \ref{obs1} and \eqref{eq1}, we have $2^{\aleph_0}\leq\mathrm{c.dim}(R_\infty)\leq2^{|R_\infty|}=2^{\aleph_0}$. Thus equality holds throughout. It is immediate from \eqref{eq1} that $R$ has strong cardinal Krull dimension. \end{proof}

As final motivation for the results presented in this paper, we mention the interesting work of Dobbs, Heitmann, and Mullins (\cite{DD0}-\cite{DD3}) who study lengths of chains of various algebraic structures including subspaces of a vector space, overrings, and field extensions. They work mostly in ZFC. In this note, we expand the set-theoretic focus a bit; in particular, we establish some results which are independent of ZFC. Many such results translate to chains of other algebraic objects. We now state the central question addressed in this article:

\begin{question}\label{qu1} For which cardinal pairs $(\kappa,\lambda)$ does there exist a ring $R$ of cardinality $\kappa$ and  (strong) cardinal Krull dimension $\lambda$? \end{question}

The outline of the paper is as follows. The next section gives a terse treatment of some fundamental order theory and set theory utilized throughout the paper. In Section 3, we explore Question \ref{qu1} in the context of valuation rings. Section 4 is devoted to a similar analysis, but with valuation rings replaced with polynomial rings. Section 5 continues the investigation of Question \ref{qu1} relative to Leavitt path algebras. More specifically, we show that if $\kappa \geq \lambda$, then there
exists a ring with cardinality $\kappa$ and strong cardinal Krull dimension
$\lambda$ if and only if $\kappa \geq 2$ and either $\kappa \geq \omega$
or $\lambda =0$ (Theorem \ref{thm1}). We also construct, for any infinite cardinal
$\kappa$, a valuation ring (Corollary \ref{sun}), a polynomial ring (Corollary \ref{thm2}),
and a Leavitt path algebra (Theorem \ref{thm7}) having cardinality $\kappa$ and
cardinal Krull dimension $\lambda$ strictly larger than $\kappa$. Most of the
rest of the paper discusses what values of
$\lambda$ and $\kappa$ are possible in this situation, and how that
depends on set-theoretic assumptions (see Theorem \ref{yetanother}, Corollary \ref{ttb},
Corollary \ref{cccc}, and Proposition \ref{forceprop}).

\section{Preliminaries}

In this terse section, we present several definitions and results of which we shall make frequent use.

\subsection{Order Theory} 

Let $P$ be a set. A \emph{preorder} on $P$ is a binary relation $\leq$ on $P$ which is reflexive and transitive. If in addition, $\leq$ is antisymmetric, then we say that $\leq$ is a \emph{partial order} on $P$. In this case, $(P,\leq)$ is called a \emph{partially ordered set} or \emph{poset}. If in addition, for any $x,y\in P$, either $x\leq y$ or $y\leq x$, then we say that $\leq$ is a \emph{total order} or \emph{linear order} on $P$, and call $(P,\leq)$ a \emph{linearly ordered set} or a \emph{chain}. We call $\leq$ a \emph{well-order} on $P$ if, in addition to being linear, every nonempty subset of $P$ has a $\leq$-least element. Finally, suppose that $(L,\leq)$ is a linearly ordered set and that $D\subseteq L$. Then $D$ is \emph{dense} in $L$ provided that for every $x,y\in L$ with $x<y$, there exists some $d\in D$ such that $x<d<y$. 

If $\prec$ is a binary relation on $P$ which is irreflexive and transitive, then $\prec$ is often called a \emph{strict} partial order on $P$. Observe that if $\prec$ is a strict partial order on $P$, then the binary relation $\preceq$ defined on $P$ by $x\preceq y$ if and only if $x=y$ or $x\prec y$ is a partial order on $P$. Analogously, if $\preceq$ is a partial order on $P$, then the binary relation $\prec$ defined on $P$ by $x\prec y$ if and only if $x\preceq y$ and $x\neq y$ is a strict partial order on $P$. Similar remarks apply in case $\preceq$ is a linear or well-order on $P$. 

\subsection{Set Theory}\label{sub}

Recall that if $s$ is any set, then the set membership relation $\in$ induces a binary relation $\in_s$ on $s$ defined by $\in_s:=\{(a,b)\in s\times s\mid a\in b\}$. The relation $\in_s$ is called the \emph{epsilon relation} on $s$. A set $s$ is \emph{transitive} provided if $a\in b\in s$, then also $a\in s$. A transitive set $s$ which is (strictly) well-ordered by $\in_s$ is called an \emph{ordinal}.  Let $ORD$ denote the proper class of ordinal numbers. The membership relation induces a well-order on $ORD$; when there is no ambiguity, this relation will be denoted by the more commonplace $<$. 

The axiom of choice (along with the other axioms of ZFC) implies that every set is equinumerous with an ordinal. If $x$ is any set, then the least ordinal $\alpha$ equinumerous with $x$ (relative to the membership relation just described) is called the \emph{cardinality} of $x$; we say that $\alpha$ is the cardinal number of $x$, and write $|x|=\alpha$. An ordinal $\alpha$ is a \emph{cardinal number} if $\alpha=|x|$ for some set $x$. Let $\kappa$ be an infinite cardinal. The \emph{cofinality} of $\kappa$, denoted $\mathrm{cf}(\kappa)$, is the least cardinal $\lambda$ such that $\kappa$ is the union of $\lambda$ cardinals, each smaller than $\kappa$. We say that $\kappa$ is \emph{regular} if $\mathrm{cf}(\kappa)=\kappa$. If $\kappa$ is not regular, then $\kappa$ is called \emph{singular}. 

Now let $\alpha\in ORD$ be arbitrary. Then $\alpha$ is a \emph{limit ordinal} provided that for every $i<\alpha$, there is an ordinal $j$ such that $i<j<\alpha$. The ordinal $\alpha$ is a \emph{successor ordinal} if $\alpha=i+1$ for some ordinal $i$, where $i+1:=i\cup\{i\}$ (which is the least ordinal greater than $i$). It is easy to prove that every ordinal number is either a limit or a successor ordinal. Analogously, a cardinal number $\kappa$ is a \emph{successor cardinal} if there is a cardinal number $\beta$ such that $\kappa$ is the least cardinal larger than $\beta$. In this case, we write $\kappa=\beta^+$. If there is no such $\beta$, then $\kappa$ is a \emph{limit cardinal}.\footnote{It is easy to see that every infinite cardinal is a limit ordinal, but not every infinite cardinal is a limit cardinal.}

Next, let $\alpha$ and $\beta$ be cardinals and let $X$ and $Y$ be disjoint sets of cardinality $\alpha$ and $\beta$, respectively. The sum $\alpha+\beta$ of $\alpha$ and $\beta$ is defined to be $|X\cup Y|$. Moreover, the product $\alpha\cdot\beta$ is $|X\times Y|$, and the power $\alpha^\beta$ is defined by $\alpha^\beta:=|X^Y|$, where $X^Y:=\{f\mid f\colon Y\rightarrow X$ is a function$\}$. We now recall several elementary results on cardinal arithmetic which will be useful throughout this paper:

\begin{fact}\label{cardfact} Let $\alpha$ and $\beta$ be cardinals with $\alpha$ nonzero and $\beta$ infinite. Then:
\begin{enumerate}
\item $\alpha+\beta=\max(\alpha,\beta)=\alpha\cdot\beta$,
\item$\beta^\beta=2^\beta$, and
\item $\beta$ is a limit ordinal. 
\end{enumerate} 
\end{fact}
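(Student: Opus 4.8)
The plan is to reduce all three statements to the single nontrivial identity $\mu\cdot\mu=\mu$ for every infinite cardinal $\mu$ (the Hessenberg theorem), and to prove that identity by transfinite induction on $\mu$ using a canonical well-ordering of $\mu\times\mu$. Everything else is then a short squeeze of inequalities using monotonicity of cardinal arithmetic.

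First I would dispose of (3), which is the easiest and logically independent of the rest. Suppose toward a contradiction that the infinite cardinal $\beta$ were a successor ordinal, say $\beta=\gamma+1=\gamma\cup\{\gamma\}$. Then $\gamma$ is infinite, and since removing a single point from an infinite set does not change its cardinality, we obtain $|\gamma|=|\beta|$ with $\gamma<\beta$. This contradicts that $\beta$ is a cardinal, i.e. the least ordinal of its cardinality. Hence $\beta$ is a limit ordinal.

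Next I would establish the key lemma $\mu\cdot\mu=\mu$ for infinite $\mu$. I would define the canonical (G\"odel) well-ordering $\lhd$ on $\mu\times\mu$ by declaring $(a,b)\lhd(c,d)$ if $\max(a,b)<\max(c,d)$, or $\max(a,b)=\max(c,d)$ and $(a,b)$ precedes $(c,d)$ lexicographically. Proceeding by transfinite induction, I would assume $\nu\cdot\nu=\nu$ for all infinite cardinals $\nu<\mu$ and show that the order type of $(\mu\times\mu,\lhd)$ is exactly $\mu$. The point is that for any pair $(a,b)$ with $\max(a,b)=\xi<\mu$, the set of $\lhd$-predecessors of $(a,b)$ is contained in $(\xi+1)\times(\xi+1)$, which has cardinality $|\xi|\cdot|\xi|$; by the induction hypothesis (with the finite case handled directly) this equals $\max(|\xi|,\aleph_0)<\mu$. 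Thus every proper initial segment of $\lhd$ has cardinality strictly less than $\mu$, forcing the order type to be at most $\mu$; since $\mu\times\{0\}$ already injects $\mu$ into the ordering, the order type is exactly $\mu$, yielding a bijection $\mu\times\mu\to\mu$ and hence $\mu\cdot\mu=\mu$.

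Finally I would derive (1) and (2) from the lemma. Writing $\mu:=\max(\alpha,\beta)$, which is infinite since $\beta$ is, I would observe $\mu\le\alpha+\beta\le\mu+\mu=2\cdot\mu\le\mu\cdot\mu=\mu$ and $\mu\le\alpha\cdot\beta\le\mu\cdot\mu=\mu$, where the lower bounds use $\alpha\neq0$ and $\beta$ infinite so that both factors are at least $1$; this gives $\alpha+\beta=\mu=\alpha\cdot\beta$, proving (1). For (2), from $2\le\beta$ I get $2^\beta\le\beta^\beta$, while $\beta\le2^\beta$ (Cantor's theorem) gives $\beta^\beta\le(2^\beta)^\beta=2^{\beta\cdot\beta}=2^\beta$ by part (1); combining yields $\beta^\beta=2^\beta$. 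The main obstacle is the Hessenberg lemma: the bookkeeping in the induction—verifying that initial segments of the canonical ordering stay small—is the only place where genuine work is required, since all the remaining claims follow by squeezing between monotone cardinal operations.
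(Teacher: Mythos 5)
Your proof is correct and complete. Note that the paper does not actually prove this statement: it is recorded as a ``Fact,'' with the elementary set theory explicitly deferred to the standard references \cite{TJ} and \cite{KK}. Your argument is precisely the canonical one found in those texts --- item (3) via the observation that an infinite successor ordinal $\gamma+1$ is equinumerous with $\gamma<\gamma+1$, contradicting minimality, and items (1) and (2) by squeezing monotone inequalities against Hessenberg's theorem $\mu\cdot\mu=\mu$, itself proved by transfinite induction using the G\"odel well-ordering of $\mu\times\mu$ --- and you handle the two genuine pressure points correctly: the finite-$\xi$ case in the initial-segment estimate, and the fact that the hypothesis $\alpha\neq 0$ is needed only for the lower bound $\max(\alpha,\beta)\leq\alpha\cdot\beta$.
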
 

The \emph{Continuum Hypothesis} (CH) is the assertion that there is no cardinal number properly between $\aleph_0$ and $2^{\aleph_0}$. The \emph{Generalized Continuum Hypothesis} (GCH) is the claim that for every infinite cardinal $\kappa$, there is no cardinal  properly between $\kappa$ and $2^\kappa$. By work of G\"odel and Cohen, CH (respectively, GCH) cannot be proved nor refuted from the ZFC axioms, assuming the axioms are consistent. Finally, we mention the well-known texts \cite{TJ} and \cite{KK} as references for the elementary results presented above as well as for more advanced set theory.

\section{Valuation Rings}

Before presenting our main results, we set the stage by introducing some fundamentals of valuation theory. We remark that throughout, we adopt the model-theoretic/universal-algebraic convention of denoting ordered groups and, more generally, ordered sets with boldface capital letters and the ground set (universe) with non-boldface capital letters. However, we use a standard font for rings.

\subsection{Ordered Abelian Groups and Valuations}

An \emph{ordered abelian group} is a triple $\mathbf{G}:=(G,+,\leq)$ consisting of an abelian group $(G,+)$ and a translation-invariant total order $\leq$ on $G$. In other words, $\leq$ is a total order on $G$ and is such that for all $a,b,c\in G$: if $a\leq b$, then $a+c\leq b+c$. Suppose now that $\mathbf{I}:=(I,\leq)$ is a well-ordered set and that for every $i\in I$, $\mathbf{G}_i:=(G_i,+_i,\leq_i)$ is an ordered abelian group. The group $\bigoplus_{i\in I}\mathbf{G}_i$ denotes the external direct sum of the groups $\mathbf{G}_i$ with the \emph{lexicographic order} defined as follows. Suppose that $f,g\in\bigoplus_{i\in I}G_i$ are distinct, and choose the least $i\in I$ such that $f(i)\neq g(i)$. Then $f<g$ if $f(i)<_ig(i)$. 

If $\mathbf{G}$ is a totally ordered abelian group, then we may \emph{adjoin infinity} by declaring $g+\infty=\infty+g=\infty+\infty=\infty$ for any $g\in G$ (here, $\infty$ is any set which is not already a member of $G$). Moreover, we may extend $\leq$ to $G\cup\{\infty\}$ by defining $g\leq\infty$ for $g\in G$. Suppose now that $(G,+,\leq,\infty)$ is a totally ordered abelian group with infinity adjoined, and let $F$ be a field. A function $v\colon F\to G\cup\{\infty\}$ is said to be a \emph{valuation on $F$ with value group $\mathbf{G}$} if the following conditions hold:

\begin{enumerate}
\item $v(0)=\infty$,
\item $v(F\backslash\{0\})=G$,
\item $v(xy)=v(x)+v(y)$ for all $x,y\in F$, and
\item $v(x+y)\geq\min(v(x),v(y))$ for all $x,y\in F$. 
\end{enumerate}

\noindent If we set $V:=\{x\in F\mid v(x)\geq 0\}$, then one checks that $V$ is an integral domain whose lattice of ideals forms a totally ordered chain with respect to set inclusion. The commutative domain $V$ is called the \emph{valuation ring associated with $v$}. The next result is well-known:

\begin{fact}[Krull]\label{fact2} Let $\mathbf{G}$ be a totally ordered abelian group. Then there exists a field $F$ and a valuation $v$ on $F$ such that $\mathbf{G}$ is the value group of $v$. Moreover, if $\mathbf{G}$ is nontrivial, we may find $F$ having cardinality $|G|$ (see \cite{RG}, Corollary 18.5).\end{fact}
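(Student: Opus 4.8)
The plan is to realize $\mathbf{G}$ as the value group of the fraction field of a suitable group algebra. First I would record the standard fact that every ordered abelian group is torsion-free: if $g>0$ then translation-invariance gives $ng>0$ for all $n\geq 1$, so no nonzero element has finite order. In particular, if $\mathbf{G}$ is nontrivial then $G$ contains an isomorphic copy of $\mathbb{Z}$ and is therefore infinite, so $|G|\geq\aleph_0$; this observation is what makes the cardinality bookkeeping at the end clean.

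Next, fix a countable field $k$ (for instance the prime subfield of a field of the desired characteristic), and form the group algebra $k[G]$, whose elements are the finitely supported functions $G\to k$, written as formal sums $\sum_g a_g t^g$ with multiplication determined by $t^g t^h=t^{g+h}$. For a nonzero $f=\sum_g a_g t^g$, define $v_0(f)$ to be the $\leq$-least element of its support; this is well-defined precisely because the support is finite and nonempty and $\leq$ is a total order.

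The heart of the argument is the multiplicativity $v_0(fh)=v_0(f)+v_0(h)$. Writing $m=v_0(f)$ and $n=v_0(h)$, any exponent appearing in $fh$ has the form $g_1+g_2$ with $g_1\geq m$ and $g_2\geq n$, whence $g_1+g_2\geq m+n$ with equality forcing $g_1=m$ and $g_2=n$; thus the coefficient of $t^{m+n}$ is the product of leading coefficients $a_m b_n\neq 0$ and no smaller exponent occurs. This single computation simultaneously shows that $k[G]$ has no zero divisors and is therefore an integral domain. I expect this step to be the main obstacle, in the sense that it is the only place where the total ordering is genuinely used, and the inequality on exponents together with its equality case must be invoked with care.

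Finally, let $F:=\mathrm{Frac}(k[G])$ and extend $v_0$ to $v\colon F\to G\cup\{\infty\}$ by setting $v(f/h):=v_0(f)-v_0(h)$ and $v(0):=\infty$; well-definedness and axioms (1)--(4) follow from multiplicativity together with the evident inequality $v_0(f+h)\geq\min(v_0(f),v_0(h))$. Since $v(t^g)=g$ for every $g\in G$, the map $v$ surjects onto $G$, so $\mathbf{G}$ is indeed the value group of $v$. For the cardinality claim, when $\mathbf{G}$ is nontrivial the group $G$ is infinite, the set of finite subsets of $G$ has cardinality $|G|$, and each such subset admits only $\aleph_0$ many nonzero $k$-valued coefficient assignments; hence $|k[G]|=|G|\cdot\aleph_0=|G|$ by Fact \ref{cardfact}, and passing to the fraction field preserves cardinality for an infinite domain, giving $|F|=|G|$.
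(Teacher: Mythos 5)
Your proof is correct: the paper gives no argument of its own for Fact \ref{fact2}, citing instead Corollary 18.5 of Gilmer's \emph{Multiplicative ideal theory}, and the proof underlying that citation is exactly your construction --- the group algebra $k[G]$ with the least-exponent valuation $v_0$, multiplicativity via the equality case $g_1+g_2=m+n \Rightarrow (g_1,g_2)=(m,n)$ forcing the leading coefficient $a_m b_n \neq 0$ (which also gives integrality of $k[G]$), followed by extension to $\mathrm{Frac}(k[G])$ and the cardinality count $|F|=|G|\cdot\aleph_0=|G|$. Your torsion-freeness remark and the well-definedness of $v$ on fractions are handled correctly, so there is nothing to add.
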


Next, let $\mathbf{G}$ be a totally ordered abelian group. For $g\in G$, we define the absolute value of $g$ in the natural way: $|g|:=g$ if $g\geq 0$ and $|g|:=-g$ otherwise. A subgroup $\mathbf{H}$ of $\mathbf{G}$ is called \emph{isolated} if for every $h\in H$, the interval $[-h,h]:=\{x\in G\mid |x|\leq|h|\}$ is contained in $H$. The \emph{rank} $\rho(\mathbf{G})$ of $\mathbf{G}$ is simply the order type of the collection of nontrivial isolated subgroups of $\mathbf{G}$, which is totally ordered by inclusion. The following is well-known; for a proof, see \cite{MK}, Result 1.

\begin{fact}\label{fact3} Suppose that $v$ is a valuation on a field $F$ with value group $\mathbf{G}$ and associated valuation ring $V$. Then there is a one-to-one inclusion-reversing correspondence between the collection of prime ideals of $V$ and the collection of isolated subgroups of $\mathbf{G}$. \end{fact}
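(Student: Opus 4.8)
The plan is to establish the inclusion-reversing bijection directly by exhibiting the two maps and verifying they are mutually inverse. First I would set up the correspondence: given a prime ideal $P$ of $V$, I would associate the set $H_P := \{g \in G \mid g = v(u) \text{ for some unit } u \text{ of } V_P\}$, or equivalently work with the localization $V_P$ and take the value group data visible there; going the other direction, given an isolated subgroup $\mathbf{H}$ of $\mathbf{G}$, I would associate the set $P_H := \{x \in V \mid v(x) > h \text{ for all } h \in H\} \cup \{0\}$, i.e. the elements whose value dominates all of $H$. The key preliminary step is to check that $P_H$ is actually a prime ideal of $V$ and that $H_P$ is actually an isolated subgroup; for the latter the isolation condition $[-h,h] \subseteq H$ corresponds precisely to the multiplicative closure and saturation properties needed for $P_H$ to be prime.

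The main steps, in order, would be: (i) verify $P_H$ is an ideal, using that $v(x+y) \geq \min(v(x),v(y))$ and that the set of elements with value dominating $H$ is closed under the ring operations and under multiplication by arbitrary elements of $V$; (ii) verify $P_H$ is prime, which reduces to the fact that if $v(x)$ and $v(y)$ both fail to dominate $H$ (so both lie within the "shadow" of $H$), then so does $v(xy) = v(x) + v(y)$, using that $\mathbf{H}$ is closed under addition and the isolation property controls the signs; (iii) verify $H_P$ is an isolated subgroup, using the translation-invariance of $\leq$ and the group structure of $\mathbf{G}$; and (iv) check that the two assignments $P \mapsto H_P$ and $H \mapsto P_H$ reverse inclusions and compose to the identity in both directions. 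The inclusion-reversing property is essentially formal: larger isolated subgroups impose a stronger domination requirement and hence yield smaller prime ideals, and conversely.

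The main obstacle I anticipate is step (ii), establishing primality of $P_H$, together with the bookkeeping needed to confirm the two maps are genuinely inverse. Primality is where the \emph{isolation} hypothesis on $\mathbf{H}$ does real work: one must show that the complement of $P_H$ in $V \setminus \{0\}$ is multiplicatively closed, which amounts to showing that the set of values lying within the isolated subgroup (on the nonnegative side) is closed under the group addition inherited from $\mathbf{G}$, and this is exactly the content of $\mathbf{H}$ being a subgroup satisfying the interval condition $[-h,h] \subseteq H$. Verifying $P_H \mapsto H_{P_H} = H$ requires knowing that every isolated subgroup is recovered from its associated prime, which in turn uses condition (2) in the definition of a valuation, namely surjectivity $v(F \setminus \{0\}) = G$, to guarantee that every element of $H$ actually arises as a value of a unit in the appropriate localization.

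Since this is a standard and well-documented correspondence (the statement itself cites \cite{MK}, Result 1), I would not grind through the routine verifications but would instead cite the reference and indicate the two maps explicitly, noting that the inclusion-reversing bijection follows from the interplay between the multiplicative valuation axiom (3) and the isolation condition on subgroups.
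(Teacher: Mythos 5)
Your proposal is correct and takes essentially the same route as the paper, which gives no proof of Fact \ref{fact3} at all but simply cites \cite{MK}, Result 1 --- exactly the course you ultimately adopt, and the maps you exhibit ($H \mapsto P_H := \{x \in V \mid v(x) > h \text{ for all } h \in H\}$ and $P \mapsto H_P := v(\text{units of } V_P)$) are the standard ones underlying that reference. Your sketch of the key verifications (primality of $P_H$ from convexity of $H$ together with closure under addition, and recovery of $H$ from $P_H$ via the surjectivity axiom $v(F\setminus\{0\})=G$) is sound.
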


We introduce a bit more notation and then conclude this section with a result from \cite{MK} to be invoked shortly. In what follows, if $\mathbf{L}:=(L,\leq)$ is a linearly ordered set, then set $\mathbf{L}':=(L'=L,\leq^{\text{op}})$, where $\leq^{\text{op}}$ is defined on $L$ by $x\leq^{\text{op}}y$ if and only if $y\leq x$. Next, suppose that $\mathbf{I}$ is a linearly ordered set and that for every $i\in I$, $\mathbf{S}_i:=(S_i,\leq_i)$ is also a linearly ordered set. Now define $\star_{i\in I}\mathbf{S}_i:=\{(i,s)\mid i\in I, s\in S_i\}$, endowed with the order $\leq^{\star}$ given by $(i,s)\leq^{\star}(j,t)$ if and only if $i<j$ or $i=j$ and $s\leq_it$. It is easy to see that $\leq^{\star}$ is a linear order on $\{(i,s)\mid i\in I, s\in S_i\}$; the structure $\star_{i\in I}\mathbf{S}_i$ is called the \emph{concatenation of the $\mathbf{S}_i$'s} (relative to $\mathbf{I}$).

\begin{theorem}[\cite{MK}, Theorem 1]\label{prop2} Let $\mathbf{I}$ be a well-ordered set, and suppose that for every $i\in I$, $\mathbf{G}_i$ is a totally ordered abelian group. Then $\rho(\bigoplus_{i\in I}\mathbf{G}_i)=\star_{i\in I'}\rho(\mathbf{G}_i)$. \label{prop2}  \end{theorem}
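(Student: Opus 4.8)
The plan is to give an explicit description of the nonzero isolated subgroups of $G:=\bigoplus_{i\in I}G_i$ and to exhibit an order isomorphism from $\star_{i\in I'}\rho(\mathbf{G}_i)$ onto the chain of these subgroups ordered by inclusion. For nonzero $f\in G$, write $\lambda(f)$ for the least element of the finite support $\mathrm{supp}(f):=\{i\in I\mid f(i)\neq 0\}$; this least element exists because $\mathbf{I}$ is well-ordered. The single structural observation driving everything is the \emph{magnitude principle} for the lexicographic order: for nonzero $f,g\in G$, if $\lambda(f)<\lambda(g)$ then $|g|<|f|$. In words, a smaller leading index forces a strictly larger absolute value; this is immediate from the definition of $<$ on $\bigoplus_{i\in I}\mathbf{G}_i$ and already explains the order reversal $I'$ appearing in the statement.

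For $i\in I$ and a nontrivial isolated subgroup $K$ of $\mathbf{G}_i$, set $H(i,K):=\{f\in G\mid \mathrm{supp}(f)\subseteq\{l\in I\mid l\geq i\}\text{ and }f(i)\in K\}$. A direct check using the magnitude principle shows that each $H(i,K)$ is a nonzero isolated subgroup of $G$. The heart of the proof is the converse: every nonzero isolated subgroup $H$ of $G$ equals $H(i_0,K)$ for a unique pair $(i_0,K)$. I would take $i_0:=\min\{\lambda(h)\mid h\in H\setminus\{0\}\}$ (again using that $\mathbf{I}$ is well-ordered) and $K:=\{f(i_0)\mid f\in H\}\leq G_{i_0}$. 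Minimality of $i_0$ gives $H\subseteq\{f\mid\mathrm{supp}(f)\subseteq\{l\geq i_0\}\}$, while the magnitude principle forces every element with support contained in $\{l>i_0\}$ into $H$; the convexity of $K$ in $\mathbf{G}_{i_0}$ and the identity $H=H(i_0,K)$ then follow by a short decomposition argument (any $f\in H(i_0,K)$ can be written as $h+(f-h)$ with $h\in H$ chosen so that $h(i_0)=f(i_0)$, whence $f-h$ has support in $\{l>i_0\}$ and so already lies in $H$). This establishes a bijection $\Phi\colon(i,K)\mapsto H(i,K)$ between the ground set $\{(i,K)\mid i\in I,\ K\text{ a nontrivial isolated subgroup of }G_i\}$ of $\star_{i\in I'}\rho(\mathbf{G}_i)$ and the nonzero isolated subgroups of $G$.

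It then remains to verify that $\Phi$ is order-preserving for $\leq^{\star}$ (taken with respect to the reversed order $\leq^{\mathrm{op}}$ on $I$) and $\subseteq$. For $i=j$ one has $H(i,K)\subseteq H(i,K')$ iff $K\subseteq K'$, matching the second clause in the definition of $\leq^{\star}$. For $i\neq j$, if $j<i$ in $I$ (equivalently $i<j$ in $I'$) then every $f\in H(i,K)$ has support in $\{l\geq i\}\subseteq\{l>j\}$, so $f\in H(j,K')$ and hence $H(i,K)\subseteq H(j,K')$; conversely, taking $f$ supported on $\{i\}$ with $0\neq f(i)\in K$ shows $H(i,K)\not\subseteq H(j,K')$ when $i<j$ in $I$. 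Since the isolated subgroups of any totally ordered abelian group form a chain under inclusion (as noted in the definition of $\rho$), a bijection between two such chains that preserves order is automatically an isomorphism, so $\Phi$ is an order isomorphism and $\rho(\bigoplus_{i\in I}\mathbf{G}_i)=\star_{i\in I'}\rho(\mathbf{G}_i)$.

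The main obstacle I anticipate is the classification lemma, specifically proving that an arbitrary nonzero isolated subgroup is exactly $H(i_0,K)$. Isolating the correct invariant $i_0$ relies essentially on the well-ordering of $\mathbf{I}$, and verifying both that $K$ is convex in $\mathbf{G}_{i_0}$ and that $H$ contains the full tail $\{f\mid\mathrm{supp}(f)\subseteq\{l>i_0\}\}$ requires careful bookkeeping with the magnitude principle. Once this description is in hand, the order computation, including the reversal to $I'$, is routine.
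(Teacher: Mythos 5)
The paper itself offers no proof of this statement: it is imported verbatim from Kumar's paper (\cite{MK}, Theorem 1), so there is no internal argument to compare yours against. Judged on its own merits, your proof is correct, and it is in essence the standard route (and the one taken in the cited reference): classify the nontrivial isolated subgroups of the lexicographic sum as the groups $H(i,K)$ and check that $(i,K)\mapsto H(i,K)$ implements the order reversal on $I$. The key points are all sound. The magnitude principle is exactly the right engine; the well-ordering of $\mathbf{I}$ enters precisely where you say it does, in defining $i_0:=\min\{\lambda(h)\mid h\in H\setminus\{0\}\}$; the tail containment $\{f\mid\mathrm{supp}(f)\subseteq\{l>i_0\}\}\subseteq H$ follows from the magnitude principle applied against any $h\in H$ with $\lambda(h)=i_0$; the decomposition $f=h+(f-h)$ then correctly yields $H=H(i_0,K)$; and your verification that the map is order-preserving and order-reflecting (hence, between chains, an order isomorphism) is right, including the direction of the reversal, since $j<i$ in $I$ gives $\{l\geq i\}\subseteq\{l>j\}$ and thus $H(i,K)\subseteq H(j,K')$.

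Two pieces of bookkeeping that you gesture at should be written out, though neither is a gap in the strategy. First, the convexity of $K=\{f(i_0)\mid f\in H\}$ in $\mathbf{G}_{i_0}$ does not follow from the decomposition argument itself: for $a\in G_{i_0}$ with $|a|<|k|$ and $k=f(i_0)$, $f\in H$, one compares $|f|$ with the element supported at $\{i_0\}$ taking value $a$ (first difference at $i_0$ gives strict inequality, so isolatedness of $H$ applies), while $|a|=|k|$ forces $a=\pm k\in K$ because $K$ is a subgroup. Second, in checking that $H(i,K)$ is isolated, the magnitude principle only disposes of the case $\lambda(x)>\lambda(h)$ or $\lambda(x)<i$; when $\lambda(x)=\lambda(h)=i$ you must compare leading coefficients to get $|x(i)|\leq|h(i)|$ in $G_i$ and then invoke isolatedness of $K$ in $\mathbf{G}_i$. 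Finally, a harmless edge case worth a sentence: indices $i$ with $G_i$ trivial contribute nothing to either side (such $i$ can never arise as $i_0$, and $\rho(\mathbf{G}_i)$ is empty), and the full group $G$ itself occurs as $H(i_0,G_{i_0})$ for the least $i_0$ with $G_{i_0}$ nontrivial, consistent with $\rho$ counting \emph{all} nontrivial isolated subgroups.
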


\subsection{The Downward-Restricted Version of Question \ref{qu1}}

The purpose of this subsection is to state and prove our first theorem, which resolves Question \ref{qu1} in the case $\kappa\geq\lambda$.

\begin{theorem}\label{thm1} Let $\kappa$ and $\lambda$ be cardinals with $\kappa\geq\lambda$. Then there exists a ring $R$ of cardinality $\kappa$ with cardinal Krull dimension $\lambda$ if and only if the following hold:

\begin{enumerate}
\item$\kappa\geq2$, and
\item if $\kappa<\omega$, then $\lambda=0$.
\end{enumerate}

\noindent Moreover, if $($1$)$ and $($2$)$ above hold, we may find such a ring $R$ with strong cardinal Krull dimension.

\end{theorem}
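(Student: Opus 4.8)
The plan is to prove necessity and sufficiency separately, building each sufficiency example so that it realizes its cardinal Krull dimension by an explicit chain, which settles the ``moreover'' clause at the same time. For necessity, note first that $1\neq 0$ forces $|R|\geq 2$, giving~(1). For~(2), suppose $\kappa<\omega$, so $R$ is finite and hence Artinian. In an Artinian ring every prime ideal is maximal: if $P$ is prime then $R/P$ is a finite prime ring, whose Jacobson radical is a nilpotent ideal and therefore zero by primeness, so $R/P$ is semisimple and prime, hence simple, and $P$ is maximal. Thus no chain of primes has length $\geq 1$, so $\mathrm{c.dim}(R)=0$ and $\lambda=0$.

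For sufficiency assume (1) and (2), and split on $\lambda$. If $\lambda=0$, take $\mathbb{Z}/\kappa\mathbb{Z}$ when $\kappa<\omega$ and any field of cardinality $\kappa$ when $\kappa\geq\omega$; each has a prime ideal but no proper inclusion of primes, so $\mathrm{sc.dim}=0$ (Observation~\ref{obs0}). If $1\leq\lambda<\omega$, then (2) forces $\kappa\geq\omega$; choose a field $K$ with $|K|=\kappa$ and set $R=K[X_1,\ldots,X_\lambda]$, which has cardinality $\kappa$ and, by part~(2) of Fact~\ref{fact1}, Krull dimension $\lambda$. Being Noetherian of finite dimension, it has $\mathrm{sc.dim}(R)=\lambda$ by Observation~\ref{obs0}.

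The remaining case $\omega\leq\lambda\leq\kappa$ is the heart of the matter, and I would handle it with a valuation ring. Take $\mathbf{G}=\bigoplus_{i\in\lambda}\mathbb{Z}$ under the lexicographic order, with index set a well-ordered set of order type $\lambda$. Since $\rho(\mathbb{Z})$ is a one-point order, Theorem~\ref{prop2} shows that $\rho(\mathbf{G})$ is an order of cardinality $\lambda$, so $\mathbf{G}$ has exactly $\lambda$ nontrivial isolated subgroups. To control the cardinality I would \emph{not} invoke Fact~\ref{fact2} directly, since it only yields a field of cardinality $|G|=\lambda$; instead I fix a field $k$ with $|k|=\kappa$ and form the group ring $k[G]$, which is a domain because $\mathbf{G}$ is ordered. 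The lowest-term assignment $\sum_g a_g x^g\mapsto \min\{g : a_g\neq 0\}$ extends to a valuation $v$ on $F:=\operatorname{Frac}(k[G])$ with value group $\mathbf{G}$ and residue field $k$; its valuation ring $V$ satisfies $k\subseteq V\subseteq F$ and $|F|=\max(|k|,|G|)=\kappa$, whence $|V|=\kappa$. By Fact~\ref{fact3} the primes of $V$ are in inclusion-reversing bijection with the isolated subgroups of $\mathbf{G}$, and since all ideals of a valuation ring form a chain, these primes form a single chain of cardinality $\lambda$. Hence $\mathrm{c.dim}(V)=\lambda$, the chain realizes it, and $\mathrm{sc.dim}(V)=\lambda$ by part~(3) of Observation~\ref{obs0}.

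The one genuinely delicate point is this cardinality match when $\kappa>\lambda$, indeed when $\kappa>\max(\lambda,2^{\aleph_0})$: one cannot keep $|G|=\kappa$ while limiting $\mathbf{G}$ to $\lambda$ isolated subgroups, because rank-one (archimedean) lexicographic summands cannot exceed the continuum, and adding further summands only creates more primes. Decoupling $|V|$ from $|G|$ by enlarging the residue field through the group ring is the crucial device, and the routine-but-essential verification is that the lowest-term map is multiplicative --- the bottom term of a product does not cancel precisely because $\mathbf{G}$ is ordered --- so that $v$ is a valuation with value group exactly $\mathbf{G}$.
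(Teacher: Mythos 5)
Your proof is correct, and in the crucial case $\omega\leq\lambda\leq\kappa$ it takes a genuinely different route from the paper's. The paper keeps Krull's theorem (Fact \ref{fact2}) as a black box: it sets $\mathbf{G}:=\bigoplus_\lambda\mathbb{Z}$ and $\mathbf{H}:=\bigoplus_\kappa\mathbb{Z}$, orders $\mathbf{G}\oplus\mathbf{H}$ lexicographically so that the value group itself has cardinality $\kappa$, invokes Fact \ref{fact2} to get a field $F$ with $|F|=\kappa$ and a valuation $v$ with value group $\mathbf{G}\oplus\mathbf{H}$, and then \emph{coarsens} $v$ by composing with the projection $\pi_1$, obtaining a valuation on the same field with value group $\mathbf{G}$; Fact \ref{fact3} together with \eqref{eq3} then gives a valuation ring of cardinality $\kappa$ whose primes form a chain of length $\lambda$. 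You instead fix the value group at $\mathbf{G}=\bigoplus_\lambda\mathbb{Z}$ and inflate the field by hand, via the lowest-term valuation on $\operatorname{Frac}(k[G])$ over a base field $k$ of cardinality $\kappa$ --- in effect reproving Fact \ref{fact2} with control over the cardinality of the field, which is precisely what the ``Moreover'' clause of Fact \ref{fact2} does not supply. Both devices come down to making the residue field large; the paper's is shorter modulo the quoted facts, while yours is more self-contained and isolates the key verification (multiplicativity of the lowest-term map, which holds because $\mathbf{G}$ is ordered). Two smaller divergences: you dispatch $1\leq\lambda<\omega$ separately via $K[X_1,\ldots,X_\lambda]$ and Fact \ref{fact1}(2), whereas the paper's valuation construction covers all $\lambda>0$ uniformly (for finite $\lambda$ the group $\bigoplus_\lambda\mathbb{Z}$ works just as well); and for necessity you prove directly that finite prime rings are simple, where the paper cites the literature.

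One caution about your closing aside, though it is purely motivational and does not affect the proof: the claimed impossibility of a totally ordered abelian group with $|G|=\kappa$ but only $\lambda$ isolated subgroups when $\kappa>\max(\lambda,2^{\aleph_0})$ is valid only within the class of lexicographic \emph{direct sums} of archimedean groups, as your parenthetical about rank-one summands suggests. In general, Hahn-type products of $\lambda$ copies of $\mathbb{R}$ have only about $\lambda$ isolated subgroups yet can have cardinality as large as $2^\lambda$, so the genuine obstruction sets in only at $\kappa>2^\lambda$.
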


\begin{proof} Recall that for us, all rings are unital with $1\neq 0$. Moreover, every finite ring has Krull dimension $0$ as every prime ideal of a finite ring is maximal.\footnote{Of course, every finite ring is Artinian. It is well-known that every prime ideal of a commutative Artinian ring is maximal. If $R$ is not commutative, the result still holds; see \cite{LAM}, Exercise 10.4.} The necessity of (1) and (2) above is now clear. Conversely, suppose that $\kappa\geq\lambda$ are cardinals satisfying (1) and (2). Observe that if $\kappa<\omega$, then we may take $R:=\mathbb{Z}/\langle\kappa\rangle$. Now assume $\kappa\geq\omega$. If $\lambda=0$, then choose $R$ to be any field of cardinality $\kappa$. Suppose that $\lambda>0$. Note that the ordered abelian group $(\mathbb{Z},+,\leq)$ has no proper nontrivial isolated subgroups. Applying Theorem \ref{prop2}, it is easy to see that

\begin{equation}\label{eq3}
 \rho(\bigoplus_{\lambda}\mathbb{Z})\cong(\lambda,\in_\lambda^{\text{op}}). 
\end{equation}

\noindent Set $\mathbf{G}:=\bigoplus_\lambda\mathbb{Z}$ and $\mathbf{H}:=\bigoplus_{\kappa}\mathbb{Z}$, both ordered lexicographically. Basic set theory implies that $|G\times H|=\kappa$. If one endows the external direct sum $\mathbf{G}\oplus\mathbf{H}$ with the lexicographic order, then by Fact \ref{fact2}, there is a field $F$ of cardinality $\kappa$ and a valuation $v$ on $F$ with value group $\mathbf{G}\oplus\mathbf{H}$. Let $\pi_1\colon G\times H\rightarrow G$ be the projection function onto the first coordinate. It is straightforward to show that the map $w\colon F\rightarrow G\cup\{\infty\}$ defined by 

\begin{equation*}
w(x):=
\begin{cases}
(\pi_1\circ v)(x) & \text{if } x\neq 0, \ \textrm{and}\\
\infty & \text{otherwise }
\end{cases}
\end{equation*}

\noindent is a valuation on $F$ with value group $\mathbf{G}$. If $V$ is the corresponding valuation ring, then it follows from Fact \ref{fact3} and \eqref{eq3} above that $V$ has cardinality $\kappa$ and strong cardinal Krull dimension $\lambda$. \end{proof}

\subsection{The Upward-Restricted Version of Question \ref{qu1}}

We now change gears and consider the ``upward-restricted" version of Question \ref{qu1}. To begin, we recall the principal results from Bailey's thesis (\cite{AB}). 

\begin{theorem}[\cite{AB}, Theorems 5.7 and 6.3, respectively]\label{lem1} The following hold:

\begin{enumerate}
\item There are arbitrarily large cardinals $\kappa$ for which it can be shown in ZFC that there is a valuation ring $V$ of cardinality $\kappa$ with a chain of prime ideals of length $2^\kappa$.
\item Assuming GCH, for every infinite cardinal $\kappa$, there is a valuation ring $V$ of cardinality $\kappa$ with a chain of prime ideals of length $2^\kappa$.
\end{enumerate}
\end{theorem}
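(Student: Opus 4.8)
The plan is to realize the desired ring as the valuation ring attached to a carefully chosen totally ordered abelian group, exploiting the dictionary supplied by Facts \ref{fact2} and \ref{fact3}: a chain of prime ideals of length $2^\kappa$ in $V$ corresponds (inclusion-reversingly) to a chain of $2^\kappa$ nontrivial isolated subgroups of the value group $\mathbf{G}$. Since the isolated subgroups of any $\mathbf{G}$ are linearly ordered by inclusion, it suffices to produce, for the relevant $\kappa$, a totally ordered abelian group $\mathbf{G}$ with $|G|=\kappa$ possessing at least $2^\kappa$ isolated subgroups; Fact \ref{fact2} then yields a field $F$ of cardinality $\kappa$ carrying a valuation with value group $\mathbf{G}$, and the associated valuation ring $V$ satisfies $|V|=\kappa$ and, by Fact \ref{fact3} together with Observation \ref{obs1}, has a chain of prime ideals of length exactly $2^\kappa$.

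To manufacture such a $\mathbf{G}$, I would abandon the well-ordered index sets used in the proof of Theorem \ref{thm1} (for which Theorem \ref{prop2} forces the rank to have only $|I|$ elements) in favor of a densely branching index set. Fix a linearly ordered set $\mathbf{L}=(L,\le)$ with $|L|=\kappa$ and form the lexicographically ordered group $\mathbf{G}:=\bigoplus_{l\in L}\mathbb{Z}$ of finitely supported functions; finiteness of support guarantees a least support coordinate, so the lexicographic order is total and $|G|=\kappa$. For each initial segment $D\subseteq L$ put $H_D:=\{f\in G\mid \mathrm{supp}(f)\subseteq L\setminus D\}$. A short check shows that each $H_D$ is an isolated subgroup and that $D\mapsto H_D$ is injective, so $\mathbf{G}$ has at least as many isolated subgroups as $\mathbf{L}$ has cuts. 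The whole problem is thereby reduced to the order-theoretic task of finding $\mathbf{L}$ of cardinality $\kappa$ with $2^\kappa$ cuts.

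For this I would take $\mathbf{L}$ to be the tree $2^{<\kappa}$ of binary sequences of length $<\kappa$, ordered lexicographically. Each branch $b\in 2^\kappa$ determines the cut $\{s\in 2^{<\kappa}\mid s<_{\mathrm{lex}}b\}$, distinct branches give distinct cuts, and there are $2^\kappa$ of them; since there are trivially at most $2^{|L|}=2^\kappa$ cuts, there are exactly $2^\kappa$. The one arithmetical constraint is $|2^{<\kappa}|=2^{<\kappa}=\kappa$, and this is precisely where the two clauses diverge. For (1), every strong limit cardinal $\kappa$ satisfies $2^{<\kappa}=\sup_{\nu<\kappa}2^\nu=\kappa$, and ZFC proves that strong limit cardinals (e.g. $\beth_\delta$ for limit $\delta$) occur arbitrarily high; applying the construction to these $\kappa$ gives (1). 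For (2), under GCH one has $2^{<\kappa}=\kappa$ for every infinite $\kappa$ (for successors $\kappa=\mu^+$ because $2^\mu=\mu^+=\kappa$, and for limits because $\sup_{\nu<\kappa}\nu^+=\kappa$), so the same construction applies uniformly to all $\kappa$.

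The main obstacle is the order-theoretic heart of the argument: securing a linear order of size $\kappa$ with the maximal possible number $2^\kappa$ of cuts, which is exactly what fails in ZFC for a general $\kappa$ and forces the split between the ZFC statement (1) and the GCH statement (2). The algebraic steps — passing from cuts to isolated subgroups, and from isolated subgroups to prime ideals via Facts \ref{fact2} and \ref{fact3} — are routine once the index order is in hand; the only points there needing genuine care are the verification that each $H_D$ is isolated and the cardinality bookkeeping establishing $|V|=\kappa$.
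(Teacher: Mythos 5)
Your proposal is correct and is essentially the paper's own argument: Proposition \ref{prop4'} there builds exactly the same group --- finitely supported $\mathbb{Z}$-sums over the lexicographically ordered binary tree $2^{<\lambda}$, where $\lambda$ is the least cardinal with $2^\lambda>\kappa$ (which equals $\kappa$ precisely in your strong-limit and GCH cases), with isolated subgroups cut out by segments of the tree order and separated by the $2^\lambda$ branches --- and then passes to a valuation ring via Facts \ref{fact2} and \ref{fact3}, just as you do. Your only deviations are cosmetic: you use the lex order with supports in final segments where the paper uses the reverse lex order with supports in initial segments (mirror images of one another), and you verify $2^{<\kappa}=\kappa$ directly for strong limits and under GCH instead of routing through the paper's general formulation (which needs an extra padding step $\mathbf{G}\oplus\mathbf{H}$ when $2^{<\lambda}<\kappa$, vacuous in your cases).
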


We combine Bailey's results with Theorem \ref{thm1} and Observation \ref{obs1} to immediately obtain

\begin{proposition}\label{prop3} The following is consistent with ZFC $($namely, it is a theorem of ZFC+GCH$)$: let $\kappa$ and $\lambda$ be cardinal numbers. Then there exists a ring $R$ of cardinality $\kappa$ and cardinal Krull dimension $\lambda$ if and only if
\begin{enumerate}
\item $\kappa\geq2$,
\item if $\kappa<\omega$, then $\lambda=0$, and
\item $\lambda\leq2^\kappa$. 
\end{enumerate}
If $($1$)$--$($3$)$ hold, we can choose such a ring $R$ with strong cardinal Krull dimension. Additionally, we can find such a valuation ring $R$ if and only if either $\kappa$ is an infinite cardinal or a power of a prime.\footnote{It is well-known that every finite local commutative ring has cardinality a power of a prime (see \cite{BM}). As valuation rings are local, the condition follows.} 
\end{proposition}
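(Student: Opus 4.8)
The plan is to assemble Proposition \ref{prop3} from the three ingredients already in hand, so the work is mostly bookkeeping across the two directions of the biconditional. First I would establish sufficiency of (1)--(3). Given cardinals $\kappa,\lambda$ satisfying the three conditions, I split into cases according to whether $\lambda\leq\kappa$ or $\lambda>\kappa$. If $\lambda\leq\kappa$, then Theorem \ref{thm1} directly produces a ring $R$ of cardinality $\kappa$ with strong cardinal Krull dimension $\lambda$ (conditions (1) and (2) are exactly the hypotheses of that theorem). If instead $\lambda>\kappa$, then $\kappa$ must be infinite (since a finite $\kappa$ forces $\lambda=0\leq\kappa$), so under GCH we have $2^\kappa=\kappa^+$, and condition (3) gives $\kappa<\lambda\leq\kappa^+$, forcing $\lambda=2^\kappa$. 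Here I invoke part (2) of Theorem \ref{lem1}: under GCH there is a valuation ring $V$ of cardinality $\kappa$ with a chain of prime ideals of length $2^\kappa=\lambda$. By Observation \ref{obs1}, $\mathrm{c.dim}(V)\leq 2^{|V|}=2^\kappa=\lambda$, while the exhibited chain gives $\mathrm{c.dim}(V)\geq\lambda$; hence $\mathrm{c.dim}(V)=\lambda$ and the chain realizes it, so $\mathrm{sc.dim}(V)=\lambda$.

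Next I would verify necessity of (1)--(3). Conditions (1) and (2) are necessary by exactly the argument in the proof of Theorem \ref{thm1}: a ring is required to have $1\neq 0$ so $\kappa\geq 2$, and every finite ring has Krull dimension $0$ since its prime ideals are maximal. Condition (3) is immediate from Observation \ref{obs1}, which gives $\mathrm{c.dim}(R)\leq 2^{|R|}=2^\kappa$ unconditionally, with no appeal to GCH needed.

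For the final sentence about realizing $R$ as a \emph{valuation} ring, I would argue the equivalence ``$\kappa$ infinite or a prime power.'' For sufficiency, if $\kappa$ is infinite the rings constructed above (in both the $\lambda\leq\kappa$ and $\lambda=2^\kappa$ cases) are already valuation rings: Theorem \ref{thm1}'s construction produces a valuation ring $V$, and Theorem \ref{lem1} supplies a valuation ring directly. If $\kappa$ is a finite prime power $p^k$, then condition (2) forces $\lambda=0$, and I can take $R$ to be the finite field $\mathbb{F}_{p^k}$, which is trivially a valuation ring (a field is a valuation ring with value group the trivial group, having Krull dimension $0$). For necessity, if $\kappa$ is finite then $R$ must be a finite valuation ring; valuation rings are local and commutative, so $R$ is a finite local commutative ring, which by the cited fact (\cite{BM}) has cardinality a prime power.

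The main obstacle is not any single hard computation but rather the careful case analysis in the sufficiency direction, specifically pinning down that $\kappa<\lambda\leq\kappa^+$ together with GCH forces $\lambda=2^\kappa$ exactly, so that Theorem \ref{lem1}(2) applies and nothing in the range $(\kappa,2^\kappa)$ is left unaddressed. I would also take care to state clearly which claims are outright theorems of ZFC (necessity of (1)--(3), and the valuation-ring characterization) versus which require GCH (the sufficiency of (3) when $\lambda>\kappa$), since the proposition is phrased as a consistency result precisely because this one implication leans on GCH.
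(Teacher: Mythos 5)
Your proposal is correct and matches the paper's approach exactly: the paper derives Proposition \ref{prop3} as an immediate combination of Theorem \ref{thm1} (the case $\lambda\leq\kappa$), Theorem \ref{lem1}(2) (GCH pinning down $\lambda=2^\kappa$ when $\lambda>\kappa$), and Observation \ref{obs1} (necessity of (3)), with the valuation-ring characterization handled by the footnote's prime-power argument just as you do. Your write-up merely makes explicit the bookkeeping the paper leaves implicit, including the observation that GCH collapses the interval $(\kappa,2^\kappa]$ to the single value $\kappa^+=2^\kappa$.
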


Our objective is now to study the limitations of Proposition \ref{prop3} in ZFC. First, we prove a proposition which will afford us a short proof of Bailey's two theorems presented above along with other results concerning what can be proved
in ZFC regarding the possible strong cardinal Krull dimension of a valuation ring.

\begin{proposition}\label{prop4'} Let $\kappa$ be an infinite cardinal and let $\lambda$ be the least cardinal such that $2^\lambda>\kappa$. Then there exists a totally ordered abelian group $\mathbf{G}$ such that $|G|=\kappa$ and $\mathbf{G}$ has at least $2^\lambda$ isolated subgroups. \end{proposition}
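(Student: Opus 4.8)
The plan is to realize $\mathbf{G}$ as a lexicographically ordered direct sum $\bigoplus_{i\in I}\mathbb{Z}$ over a carefully chosen linearly ordered index set $\mathbf{I}=(I,\le)$, and to reduce the counting of isolated subgroups to the purely order-theoretic task of counting \emph{final segments} (upward-closed subsets) of $\mathbf{I}$. First I would note that, because elements of a direct sum have finite support, for any distinct $f,g\in\bigoplus_{i\in I}\mathbb{Z}$ the set $\{i\in I:f(i)\ne g(i)\}$ is finite and hence has a $\le$-least element; thus the lexicographic order is a genuine total order even when $\mathbf{I}$ is \emph{not} well-ordered. (This is exactly why I cannot invoke Theorem \ref{prop2}, whose hypothesis requires $\mathbf{I}$ to be well-ordered, and why passing to a non-well-ordered index is what buys us extra isolated subgroups.) Writing $\mathrm{supp}(f)$ for the support of $f$, I attach to each final segment $S\subseteq I$ the set $H_S:=\{f:\mathrm{supp}(f)\subseteq S\}$.

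The key lemma is that each $H_S$ is an isolated subgroup and that $S\mapsto H_S$ is injective. Injectivity is immediate: if $i\in S\setminus S'$, the standard generator $e_i$ lies in $H_S$ but not $H_{S'}$. For the isolated property I would take $h\in H_S$ and $x$ with $|x|\le|h|$, set $i_0=\min\mathrm{supp}(h)$, $j_0=\min\mathrm{supp}(x)$, and use that the least-index nonzero coordinate governs the lexicographic order: if $j_0<i_0$, then $|x|$ and $|h|$ first differ at $j_0$, where $|x|(j_0)>0=|h|(j_0)$, forcing $|x|>|h|$, a contradiction. Hence $j_0\ge i_0\in S$, and since $S$ is a final segment, $\mathrm{supp}(x)\subseteq S$, so $x\in H_S$. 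Consequently $\bigoplus_{i\in I}\mathbb{Z}$ has at least as many isolated subgroups as $\mathbf{I}$ has final segments, and it remains only to produce a suitable $\mathbf{I}$.

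For the index set I would use a binary tree. Let $T:=2^{<\lambda}$ be the set of all $\{0,1\}$-sequences of length $<\lambda$, ordered lexicographically (with the convention, on $T\cup 2^{\lambda}$, that a proper prefix precedes its extensions). Since $\lambda$ is the least cardinal with $2^\lambda>\kappa$ and $2^\kappa>\kappa$ gives $\lambda\le\kappa$, every $\alpha<\lambda$ has $|\alpha|<\lambda$ and hence $2^{|\alpha|}\le\kappa$; so by Fact \ref{cardfact}, $|T|=\sum_{\alpha<\lambda}2^{|\alpha|}\le\lambda\cdot\kappa=\kappa$. Because $\lambda$ is a limit ordinal (Fact \ref{cardfact}), every node extends, so the maximal branches of $T$ are exactly the $2^\lambda$ sequences $b\in 2^{\lambda}$, and each $b$ cuts out the final segment $F_b:=\{s\in T:s>_{\mathrm{lex}}b\}$. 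The crucial point is that distinct branches give distinct cuts: if $b\ne b'$ first differ at coordinate $\gamma<\lambda$ with $b(\gamma)=0$ and $b'(\gamma)=1$, then the node $t:=(b\restriction\gamma)\frown\langle 1\rangle\in T$ satisfies $b<_{\mathrm{lex}}t<_{\mathrm{lex}}b'$, so $t\in F_b\setminus F_{b'}$. Thus $T$ has at least $2^\lambda$ final segments. Finally, to make the cardinality exactly $\kappa$, I would let $\mathbf{I}$ be the concatenation of $\mathbf{T}$ with a chain $K$ of $\kappa$ additional points placed above all of $T$; then $|I|=\kappa$, and $F\mapsto F\cup K$ injects the final segments of $T$ into those of $\mathbf{I}$, preserving the bound. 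Setting $\mathbf{G}:=\bigoplus_{i\in I}\mathbb{Z}$ yields $|G|=|I|\cdot\aleph_0=\kappa$ with at least $2^\lambda$ isolated subgroups.

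The main obstacle I anticipate is precisely the branch-to-cut step: one must pin down the lexicographic conventions for comparing a node of length $<\lambda$ with a full-length branch — especially the treatment of prefixes — so that the separating node $t$ provably lands strictly between $b$ and $b'$. Once that is handled, the remaining pieces (the cardinal arithmetic for $|T|$ and the verification that each $H_S$ is isolated) are routine.
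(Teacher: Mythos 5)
Your proposal is correct and is essentially the paper's own proof: both construct the group of finitely supported $\mathbb{Z}$-valued functions on the lexicographically ordered tree $2^{<\lambda}$ (with $|2^{<\lambda}|\leq\kappa$ by minimality of $\lambda$), realize isolated subgroups as the elements supported in a segment of the index order, distinguish the $2^\lambda$ branches by separating nodes (i.e., density of $2^{<\lambda}$ in $\{0,1\}^\lambda$), and then pad to cardinality exactly $\kappa$. The only differences are cosmetic mirror images: the paper orders the group by the \emph{maximal} support element (reverse lexicographic) and uses initial segments $\mathrm{seg}(x)$ indexed by $x\in\{0,1\}^\lambda$, padding via the lexicographic sum $\mathbf{G}\oplus\mathbf{H}$, whereas you use the least-support convention with final segments and pad by concatenating a chain of $\kappa$ points onto the index set.
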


\begin{proof} We let $\kappa$ be an infinite cardinal and let $\lambda$ be the least cardinal such that $2^\lambda>\kappa$. Since  $\kappa$ is infinite, it is clear that 

\begin{equation}\label{eqsweet2}
\lambda\geq\aleph_0.
\end{equation}

\noindent Since $2^\kappa>\kappa$, we conclude that

\begin{equation}\label{eqthesweet}
\lambda\leq\kappa.
\end{equation}

Next, set  

\begin{equation}\label{eqbuddy}
2^{<\lambda}:=\{f\mid f\colon i\rightarrow\{0,1\} \ \textrm{is a function for some ordinal} \ i<\lambda\}.
\end{equation}

\noindent Observe that since $\lambda$ is a cardinal, for every ordinal $i<\lambda$, we have $|i|<\lambda$. By leastness of $\lambda$, it follows that $|\{f\mid f\colon i\rightarrow\{0,1\}\}|\leq\kappa$. It is immediate from this observation and \eqref{eqthesweet} that 

\begin{equation}\label{eqyet}
2^{<\lambda}\leq\kappa\cdot\kappa=\kappa.
\end{equation}

We now define the \emph{lexicographic order} $<$ on $2^{<\lambda+1}$ as follows: for $f,g\in2^{<\lambda+1}$, set $f<g$ if $f\subsetneq g$ or $f(i)<g(i)$, where $i$ is the least element of dom$(f)~\cap~$dom$(g)$. It is easy to see that $<$ is a total order on $2^{<\lambda+1}$. For an element $h\in2^{<\lambda+1}$, let $\mathrm{seg}(h):=\{x\in 2^{<\lambda+1}\mid x<h\}$. 

Next, let $\mathbf{G}:=\mathbb{Z}^{(2^{<\lambda})}$ denote the group (under coordinate-wise addition) of all functions $f\colon2^{<\lambda}\to\mathbb{Z}$ for which the \emph{support} $s(f):=f^{-1}(\mathbb{Z}\backslash\{0\})$ is finite. Elementary set theory gives

\begin{equation}\label{math}
|G|=\max(|\mathbb{Z}|,2^{<\lambda})=2^{<\lambda}\leq\kappa. 
\end{equation}

\noindent The \emph{reverse lexicographic order} on $G$ is defined as follows: for a nonzero $f\in G$, set $m(f):=\mathrm{max}(s(f))$. Now let $P:=\{f\in G\mid f(m(f))>0\}$. One checks easily that $P$ is closed under addition and that $(P,\{0\},-P)$ partitions $G$. Thus the order $\prec$ defined on $G$ by $f\prec g$ if and only if $g-f\in P$ induces a translation-invariant total order on the group $\mathbf{G}$.

For $x\in2^{<\lambda+1}$, we let $\mathbf{G}_x:=\{f\in G\mid s(f)\subseteq\mathrm{seg}(x)\}$. It is trivial to check that $\mathbf{G}_x$ is a subgroup of $\mathbf{G}$. We claim that 

\begin{equation}\label{drew}
\mathbf{G}_x \ \textrm{is an isolated subgroup of} \ \mathbf{G}. 
\end{equation}

\noindent To see this, suppose that $f\in G, g\in G_x$ and $0\prec f\prec g$. Suppose by way of contradiction that $f\notin G_x$. Then $s(f)\nsubseteq\mathrm{seg}(x)$. Hence there is $\varphi\in s(f)$ such that $x\leq\varphi$. But we now have $m(g)<x\leq m(f)$; thus

\begin{equation}\label{drew2}
m(g)<m(f). 
\end{equation}

\noindent Recall that $0\prec g$. Hence $g(m(g))>0$. We conclude that $(g-f)(m(g-f))=(g-f)(m(f))=g(m(f))-f(m(f))=-f(m(f))$. But recall that $0\prec f$, and thus $f(m(f))>0$. Finally, we see that $(g-f)(m(g-f))<0$. But this contradicts the fact that $0\prec g-f$, and \eqref{drew} is confirmed.

Following the notation presented in Section \ref{sub}, we let $\{0,1\}^\lambda\subseteq2^{<\lambda+1}$ denote the collection of all functions $f\colon\lambda\rightarrow\{0,1\}$. We claim that if $x,z$ are distinct members of $\{0,1\}^\lambda$, then $G_x\neq G_z$. Toward this end, it is easy to verify that $2^{<\lambda}$ is dense in $\{0,1\}^\lambda$. Without loss of generality, we may suppose that $x<z$. Then there exists $y\in2^{<\lambda}$ such that $x<y<z$. Consider the function $f\in G$ defined by 

\begin{equation*}
f(\zeta):=
\begin{cases}
1& \text{if } \zeta=y, \ \textrm{and}\\
0 & \text{otherwise }.
\end{cases}
\end{equation*}

\noindent Then it is clear that $f\in G_z\backslash G_x$. Therefore, the map $x\mapsto G_x$ is an injection from $\{0,1\}^\lambda$ into the collection of isolated subgroups of $\mathbf{G}$. Thus $\mathbf{G}$ has at least $2^\lambda>\kappa$-many isolated subgroups. Recall from \eqref{math} that $|G|\leq\kappa$. If $|G|=\kappa$, we have what we need. Otherwise, simply take any ordered group $\mathbf{H}$ with $H$ of cardinality $\kappa$ and order the external direct sum $\mathbf{G}\oplus\mathbf{H}$ lexicographically. One checks at once that for $x\in\{0,1\}^\lambda$, $\mathbf{G}_x\oplus\mathbf{H}$ is an isolated subgroup of $\mathbf{G}\oplus\mathbf{H}$, and the proof is complete.  \end{proof}

Our next corollary follows immediately from Fact \ref{fact2}, Fact \ref{fact3}, and the previous proposition.

\begin{corollary}\label{sun} For every infinite cardinal $\kappa$, there exists a valuation ring of cardinality $\kappa$ with strong cardinal Krull dimension strictly larger than $\kappa$. \end{corollary}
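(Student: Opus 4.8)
The plan is to derive Corollary \ref{sun} directly from Proposition \ref{prop4'} together with the structure theory recalled earlier. Fix an infinite cardinal $\kappa$ and let $\lambda$ be the least cardinal with $2^\lambda>\kappa$, exactly as in the hypothesis of Proposition \ref{prop4'}. That proposition hands us a totally ordered abelian group $\mathbf{G}$ with $|G|=\kappa$ and at least $2^\lambda$ isolated subgroups. My first step is to invoke Fact \ref{fact2} (Krull's realization theorem): since $\mathbf{G}$ is nontrivial, there is a field $F$ of cardinality $|G|=\kappa$ and a valuation $v$ on $F$ whose value group is $\mathbf{G}$. Let $V$ be the associated valuation ring.

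Next I would pin down the cardinality and the prime spectrum of $V$. Since $V\subseteq F$ and $1\neq 0$, we have $|V|\leq|F|=\kappa$, and $V$ is infinite (it contains the integers, or more simply because its fraction field has cardinality $\kappa$ and $V$ generates it), so $|V|=\kappa$. For the Krull dimension, I apply Fact \ref{fact3}, which gives an inclusion-reversing bijection between the prime ideals of $V$ and the isolated subgroups of $\mathbf{G}$. Because $\mathbf{G}$ has at least $2^\lambda$ isolated subgroups, $V$ has at least $2^\lambda$ prime ideals; and since the ideals of a valuation ring form a chain under inclusion, so do its primes. Hence these $2^\lambda$ primes form a single chain, so $\mathrm{c.dim}(V)\geq 2^\lambda$. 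By the defining choice of $\lambda$ we have $2^\lambda>\kappa$, which already shows the dimension strictly exceeds $\kappa=|V|$.

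Finally I would record that this chain is actually attained, giving the \emph{strong} conclusion. The primes of $V$ are totally ordered by inclusion (Observation \ref{obs0}(3) applies, since the prime ideals of a valuation ring form a chain), so $\mathrm{sc.dim}(V)$ exists and equals $\mathrm{c.dim}(V)$. Observation \ref{obs1} caps things off: $\mathrm{c.dim}(V)\leq 2^{|V|}=2^\kappa$. Combining, $\kappa<2^\lambda\leq\mathrm{sc.dim}(V)=\mathrm{c.dim}(V)\leq 2^\kappa$, so $V$ is a valuation ring of cardinality $\kappa$ whose strong cardinal Krull dimension is strictly larger than $\kappa$, as required.

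This corollary is essentially a bookkeeping assembly of machinery already proved, so there is no genuine obstacle; all the real work lives in Proposition \ref{prop4'}. The only points demanding any care are the two cardinality bounds—confirming $|V|=\kappa$ rather than something smaller, and confirming that the chain of primes is realized as an honest chain rather than merely a family of the right size—but both are immediate from the fact that a valuation ring's ideals are linearly ordered and from Krull's realization theorem fixing $|F|=\kappa$.
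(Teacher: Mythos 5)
Your proof is correct and takes essentially the same route as the paper, which derives Corollary \ref{sun} immediately from Proposition \ref{prop4'} together with Fact \ref{fact2} and Fact \ref{fact3} --- exactly the assembly you carry out. The details you make explicit (that $|V|=\kappa$ because $F$ is the fraction field of $V$, and that $\mathrm{sc.dim}(V)$ exists since the primes of a valuation ring form a chain, via Observation \ref{obs0}(3)) are precisely what the paper leaves implicit.
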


We are almost ready to establish Bailey's theorems as corollaries. First, we recall that an infinite cardinal $\kappa$ is a \emph{strong limit cardinal} provided that for every cardinal $\alpha<\kappa$, also $2^\alpha<\kappa$. We weaken this condition slightly and call an infinite cardinal $\kappa$ a \emph{pseudo strong limit cardinal} (PSL cardinal) if for every cardinal $\alpha<\kappa$, $2^\alpha\leq\kappa$. It is easy to construct arbitrarily large strong limits (hence arbitrarily large PSL cardinals). Begin with an infinite cardinal $\kappa_0$. By recursion, define $\kappa_{n+1}:=2^{\kappa_n}$. Now set $\kappa:=\bigcup_{n\geq 0}\kappa_n$. Then one checks at once that $\kappa$ is a strong limit. As promised, we give a short proof of Bailey's results.

\vspace{.1 in}

\emph{Proof of Theorem \ref{lem1}.} Let $\kappa$ be an infinite cardinal. By Corollary \ref{sun}, there exists a valuation ring $V$ of cardinality $\kappa$ and strong cardinal Krull dimension at least $\kappa^+$. So if we assume GCH, it follows that for every infinite cardinal $\kappa$, there is a valuation ring of cardinality $\kappa$ and strong cardinal Krull dimension $2^\kappa$. Now suppose that $\kappa$ is a PSL cardinal. Then it follows immediately from the definition of ``PSL cardinal" that the least cardinal $\lambda$ for which $2^\lambda>\kappa$ is $\kappa$ itself. By Fact \ref{fact2}, Fact \ref{fact3}, and Proposition \ref{prop4'}, we see that there is a valuation ring of cardinality $\kappa$ and strong cardinal Krull dimension $2^\kappa$. \hfill $\square$

\vspace{.1 in}

We now present an immediate corollary of our work above.

\begin{corollary}\label{condensed} Let $\kappa$ be an infinite cardinal. Then the following hold:
\begin{enumerate}
\item $($\cite{PK}, Problem 18.4$)$ There exists a set $X$ of cardinality $\kappa$ and a chain of $\kappa^+$ subsets of $X$.
\item Assume GCH. Then there is a set $X$ of cardinality $\kappa$ and a chain of $2^\kappa$ subsets of $X$. 
\item Suppose $\kappa$ is a PSL cardinal. Then there is a set $X$ of cardinality $\kappa$ and a chain of $2^\kappa$ subsets of $X$.
\end{enumerate}
\end{corollary}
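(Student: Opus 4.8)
The plan is to observe that the genuine content of this corollary is already packaged in Proposition \ref{prop4'}, and that the only thing left to do is to reinterpret isolated subgroups as a chain of subsets and then run a short cardinal-arithmetic computation. The key structural fact is the one recorded just before Fact \ref{fact3}: the nontrivial isolated subgroups of a totally ordered abelian group $\mathbf{G}$ are totally ordered by set inclusion. Hence, as subsets of the underlying set $G$, they already form a chain in the sense of this paper. So each of the three parts reduces to exhibiting a totally ordered abelian group $\mathbf{G}$ with $|G|=\kappa$ possessing sufficiently many isolated subgroups, and then taking $X:=G$.

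First I would fix the infinite cardinal $\kappa$ and let $\lambda$ be the least cardinal with $2^\lambda>\kappa$, exactly as in Proposition \ref{prop4'}. That proposition supplies a totally ordered abelian group $\mathbf{G}$ with $|G|=\kappa$ and at least $2^\lambda$ nontrivial isolated subgroups. Setting $X:=G$, the family of these isolated subgroups is a chain under $\subseteq$ consisting of subsets of $X$, with $|X|=\kappa$. For part (1), since $2^\lambda>\kappa$ we automatically have $2^\lambda\geq\kappa^+$, so this chain has size at least $\kappa^+$; discarding members if necessary leaves a chain of exactly $\kappa^+$ subsets of $X$. This recovers the Komjáth problem directly from Proposition \ref{prop4'} alone, with no recourse to the valuation-ring machinery of Facts \ref{fact2} and \ref{fact3}.

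For parts (2) and (3) the only additional ingredient is the computation that $\lambda=\kappa$. Under GCH one has $2^\alpha=\alpha^+$ for every infinite $\alpha$, so $2^\alpha>\kappa$ forces $\alpha\geq\kappa$, whence $\lambda=\kappa$. For a PSL cardinal $\kappa$ the definition gives $2^\alpha\leq\kappa$ for every cardinal $\alpha<\kappa$, so no such $\alpha$ satisfies $2^\alpha>\kappa$, and again $\lambda=\kappa$ (as already noted in the proof of Theorem \ref{lem1}). In either case Proposition \ref{prop4'} yields a chain of at least $2^\lambda=2^\kappa$ isolated subgroups, and hence a chain of $2^\kappa$ subsets of $X$, as required.

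There is no real obstacle here: the corollary is indeed immediate once one notices that isolated subgroups sit inside $G$ and are nested. The only step demanding any attention — and it is still routine — is the identification $\lambda=\kappa$ under GCH and under the PSL hypothesis. I would present the argument via Proposition \ref{prop4'} directly rather than through Corollary \ref{sun} or the valuation rings of Theorem \ref{lem1}, since the desired chain of subsets already lives inside the ordered abelian group and the ring-theoretic translation would only add inessential overhead.
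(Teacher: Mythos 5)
Your proof is correct, and it takes a mildly but genuinely different route from the paper's. The paper offers no written proof, presenting the corollary as ``immediate from our work above,'' where the intended path runs through the valuation rings: Corollary \ref{sun} (for part (1)) and the GCH/PSL cases in the proof of Theorem \ref{lem1} (for parts (2) and (3)) produce a valuation ring $V$ of cardinality $\kappa$ with a chain of $\kappa^+$, respectively $2^\kappa$, prime ideals, and one then takes $X:=V$ with the prime ideals as the chain of subsets. You instead cut out the ring-theoretic layer entirely: you go back to Proposition \ref{prop4'}, note that the isolated subgroups produced there are nested subsets of the underlying set $G$ (indeed the paper records, in defining the rank $\rho(\mathbf{G})$, that the nontrivial isolated subgroups are totally ordered by inclusion, and in the construction itself $x<z$ visibly forces $G_x\subseteq G_z$), and take $X:=G$. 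Your identification $\lambda=\kappa$ under GCH and under the PSL hypothesis is exactly the computation the paper performs in its proof of Theorem \ref{lem1}, so that step matches. What your route buys is economy and logical hygiene: it avoids Krull's realization theorem (Fact \ref{fact2}) and the prime-ideal/isolated-subgroup correspondence (Fact \ref{fact3}), which are irrelevant to a purely set-theoretic statement, and in particular it recovers the Komj\'ath--Totik problem in part (1) without any valuation theory. What the paper's route buys is only expository flow --- the corollary is placed after the ring results, so citing them costs nothing there --- but as a standalone argument yours is the cleaner derivation, and both rest on the same engine, Proposition \ref{prop4'}.
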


Next, we prove that a conjecture of Bailey is undecidable in ZFC.

\begin{conjecture}[\cite{AB}, p. 33]\label{con1} The Generalized Continuum Hypothesis is true if and only if for every infinite cardinal number $\kappa$, there exists a commutative ring $R$ of cardinality $\kappa$ and cardinal Krull dimension strictly larger than $\kappa$. \end{conjecture}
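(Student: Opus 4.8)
The plan is to reduce Bailey's conjectured biconditional to the Generalized Continuum Hypothesis itself, and then invoke the classical independence of GCH. Write $P$ for the right-hand side of the conjecture, namely the statement ``for every infinite cardinal $\kappa$ there exists a commutative ring of cardinality $\kappa$ and cardinal Krull dimension strictly larger than $\kappa$.'' The crucial first observation is that $P$ is already a theorem of ZFC: Corollary \ref{sun} produces, for each infinite cardinal $\kappa$, a valuation ring of cardinality $\kappa$ whose strong cardinal Krull dimension exceeds $\kappa$. Valuation rings are commutative, and by Definition \ref{def00} the existence of a strong cardinal Krull dimension exceeding $\kappa$ forces $\mathrm{c.dim}$ to exceed $\kappa$ as well, so this ring witnesses $P$ with no set-theoretic hypotheses.

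Once $P$ is seen to be provable outright, the conjecture ``GCH if and only if $P$'' collapses. Since ZFC proves $P$, within ZFC the biconditional is logically equivalent to ``GCH if and only if (true),'' that is, to GCH itself; in other words, ZFC proves that Bailey's conjecture holds exactly when GCH holds. Thus the entire question of the conjecture's provability is transferred to that of GCH.

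It then remains only to recall the work of G\"odel and Cohen. In any model of ZFC${}+{}$GCH (for instance G\"odel's constructible universe), both GCH and $P$ hold, so the biconditional holds; in any model of ZFC${}+{}\neg$CH (for instance a Cohen extension adjoining $\aleph_2$ generic reals, which already violates the $\kappa=\aleph_0$ instance of GCH), GCH fails while $P$ continues to hold by Corollary \ref{sun}, so the biconditional ``false $\Leftrightarrow$ true'' fails. Both kinds of models exist assuming ZFC is consistent, so neither the conjecture nor its negation is a theorem of ZFC, and the conjecture is therefore undecidable.

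The point worth emphasizing is that essentially no obstacle remains at this stage: all of the genuine mathematical difficulty has been absorbed into Corollary \ref{sun}, whose proof (via Proposition \ref{prop4'}) constructs, in ZFC, totally ordered abelian groups with more than $\kappa$ isolated subgroups. What the present argument contributes is only the observation that this ZFC construction refutes the ``$P\Rightarrow{}$GCH'' half of Bailey's proposed equivalence, thereby pinning the conjecture's truth value precisely to that of GCH. I would take care only to phrase the final step in terms of relative consistency rather than absolute independence, so that the statement remains correct under the standing assumption that ZFC is consistent.
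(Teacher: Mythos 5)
Your proposal is correct and follows essentially the same route as the paper: both arguments observe that the right-hand side of the biconditional is an outright ZFC theorem via Corollary \ref{sun}, so the conjecture reduces to GCH itself, which is then settled in each direction by ZFC${}+{}$GCH and ZFC${}+{}\neg$GCH respectively. Your added care about relative consistency and the explicit mention of G\"odel's and Cohen's models is a slight elaboration of the same two-line argument the paper gives.
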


\begin{proof} We have proved in ZFC that for every infinite cardinal $\kappa$, there is a valuation ring of cardinality $\kappa$ and strong cardinal Krull dimension strictly larger than $\kappa$. So ZFC + GCH proves the conjecture. On the other hand, ZFC+ $\neg$GCH disproves the conjecture. \end{proof}

Given our results up to this point, it is natural to consider the following question:

\begin{question}\label{qu2} Can one prove in ZFC that for every infinite cardinal $\kappa$, there exists a ring of cardinality $\kappa$ with cardinal Krull dimension $2^\kappa$?\end{question}

\noindent The answer is `no'. We now work toward proving that this is indeed the case, and begin with two lemmas.

\begin{lemma}\label{1000} Let $S$ be a set, and suppose that $\mathcal{C}$ is a chain of subsets of $S$. Define a binary relation $\prec$ on $S$ by letting $x\prec y$ whenever there exists $A\in\mathcal{C}$ which contains $x$ but not $y$. 

\begin{enumerate}
\item The relation $\prec$ is a $($strict$)$ partial order on $S$. 
\item There exists a subset $S'$ of $S$ such that

$($a$)$ the map $\mathcal{C}\to\mathcal{P}(S')$ defined by $A\mapsto A\cap S'$ is injective, and

$($b$)$ the restriction of $\prec$ to $S'$ is a $($strict$)$ total order on $S'$.
\end{enumerate}
\end{lemma}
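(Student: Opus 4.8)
The plan is to treat the two parts separately: part (1) is a direct verification, while part (2) reduces to choosing a suitable transversal for an auxiliary equivalence relation.

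For part (1), I would check the two defining properties of a strict partial order (irreflexivity and transitivity, in the sense of the preliminaries). Irreflexivity is immediate, since $x\prec x$ would require some $A\in\mathcal{C}$ containing $x$ but not $x$. For transitivity, suppose $x\prec y$ and $y\prec z$, witnessed by $A,B\in\mathcal{C}$ with $x\in A$, $y\notin A$, $y\in B$, $z\notin B$. Since $\mathcal{C}$ is a chain, $A$ and $B$ are comparable; the case $B\subseteq A$ is excluded, as it would force $y\in B\subseteq A$, contradicting $y\notin A$. Hence $A\subseteq B$, so $x\in A\subseteq B$ and $z\notin B$ exhibit $x\prec z$ via $B$. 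The same chain argument incidentally yields that $\prec$ is asymmetric (we can never have both $x\prec y$ and $y\prec x$), which I will record because it clarifies part (2).

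For part (2), the key observation is to pin down exactly when $\prec$ fails to be total. Call $x,y\in S$ \emph{indistinguishable}, written $x\sim y$, if $x\in A\iff y\in A$ for every $A\in\mathcal{C}$; this is plainly an equivalence relation on $S$. By the very definition of $\prec$, two elements $x,y$ are $\prec$-incomparable (neither $x\prec y$ nor $y\prec x$) precisely when no member of $\mathcal{C}$ separates them, that is, precisely when $x\sim y$. Thus $\prec$ induces a strict total order on the set of $\sim$-classes, and the sole obstruction to totality on $S$ itself is the presence of nontrivial classes. This suggests taking $S'$ to be a transversal of $\sim$, that is, a set containing exactly one element from each $\sim$-class (invoking the axiom of choice). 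Property (b) is then immediate: distinct elements of $S'$ lie in distinct $\sim$-classes, hence are distinguishable, hence $\prec$-comparable, so the restriction of $\prec$ to $S'$ is total (and remains a strict partial order by part (1)). For property (a), suppose $A,B\in\mathcal{C}$ are distinct; since $\mathcal{C}$ is a chain I may assume $A\subsetneq B$ and choose $s\in B\setminus A$. Letting $s'$ be the representative in $S'$ of the $\sim$-class of $s$, the relation $s\sim s'$ forces $s'\in B$ and $s'\notin A$, so $s'\in(B\cap S')\setminus(A\cap S')$ and therefore $A\cap S'\neq B\cap S'$; hence $A\mapsto A\cap S'$ is injective.

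The crux of the argument is recognizing that a single choice of $S'$ must serve two competing purposes at once: it must be thin enough that $\prec$ becomes total on it, yet rich enough to continue separating all members of $\mathcal{C}$. The indistinguishability relation reconciles these demands, since picking one representative per class is exactly what makes $\prec$ total, while the fact that $\sim$ preserves membership in every $A\in\mathcal{C}$ guarantees that no separating element is lost in passing to the transversal.
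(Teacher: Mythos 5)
Your proposal is correct, and it reaches the paper's conclusion by a slightly different route. Part (1) is essentially identical to the paper's argument (same chain-comparability case analysis for transitivity; your witnessing set for $x\prec z$ is $B$ where the paper uses $A$, which is immaterial). For part (2), the paper proceeds via Zorn's Lemma: it calls a set $X\subseteq S$ \emph{$\mathcal{C}$-separated} if any two distinct elements of $X$ are separated by some member of $\mathcal{C}$, takes $S'$ maximal with this property, and proves injectivity of $A\mapsto A\cap S'$ by contradiction --- if $A_1\cap S'=A_2\cap S'$ with $A_1\subsetneq A_2$, an element $x^*\in A_2\setminus A_1$ is shown to extend $S'$ to a strictly larger separated set, violating maximality. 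You instead make the underlying combinatorics explicit: you introduce the indistinguishability relation $\sim$, observe that $\prec$-incomparability is exactly $\sim$-equivalence, and take $S'$ to be a transversal of $\sim$. These constructions produce the same object --- a subset of $S$ is maximal $\mathcal{C}$-separated precisely when it meets each $\sim$-class exactly once --- so the difference is one of presentation rather than substance. What your version buys is a direct, non-contradiction proof of injectivity (given $A\subsetneq B$, the representative of any $s\in B\setminus A$ explicitly separates $A\cap S'$ from $B\cap S'$) and a transparent diagnosis of exactly where totality of $\prec$ fails; what the paper's version buys is that one never needs to verify that $\sim$ is an equivalence relation, since maximality does all the work. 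Both arguments use the axiom of choice, yours directly and the paper's through Zorn's Lemma. Your parenthetical observation that $\prec$ is asymmetric is fine but redundant, since asymmetry already follows from irreflexivity together with transitivity.
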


\begin{proof} Let $S$, $\mathcal{C}$, and $\prec$ be as defined above.

\vspace{.1 in}

(1) It is patent that $\prec$ is irreflexive on $S$. Suppose now that $x,y,z\in S$ and $x\prec y\prec z$. Then there is $A_1\in\mathcal{C}$ which contains $x$ but not $y$ and $A_2\in\mathcal{C}$ which contains $y$ but not $z$. Because $\mathcal{C}$ is a chain, either $A_1\subseteq A_2$ or $A_2\subseteq A_1$. If $A_2\subseteq A_1$, then because $y\notin A_1$, also $y\notin A_2$, a contradiction. Hence $A_1\subseteq A_2$, and we deduce that $A_1$ contains $x$ but not $z$. By definition, $x\prec z$, and we have shown that $\prec$ is a partial order on $S$. 

\vspace{.1 in}

(2) Say that a set $X\subseteq S$ is \emph{$\mathcal{C}$-separated} if for every distinct $x,y\in X$, there exists $A\in\mathcal{C}$ such that either $A$ contains $x$ but not $y$ or $A$ contains $y$ but not $x$. By Zorn's Lemma, we may find a set $S'\subseteq S$ which is maximal with respect to being $\mathcal{C}$-separated. We claim that 

\begin{equation}\label{bee}
\textrm{the map} \ \mathcal{C}\to\mathcal{P}(S') \ \textrm{defined by} \ A\mapsto A\cap S' \ \textrm{is injective}. 
\end{equation}

\noindent For suppose by way of contradiction that $A_1, A_2$ are distinct members of $\mathcal{C}$ and $A_1\cap S'=A_2\cap S'$, yet $A_1\neq A_2$. Since $\mathcal{C}$ is a chain, we may suppose that $A_1\subsetneq A_2$. Choose $x^*\in A_2\backslash A_1$. Since $A_1\cap S'=A_2\cap S'$, we deduce that $x^*\notin S'$. Next, we claim that $S'\cup\{x^*\}$ is $\mathcal{C}$-separated. To see this, pick $x\in S'$ arbitrarily. It suffices to show that there is $A\in\mathcal{C}$ which contains $x$ but not $x^*$ or vice-versa. If $A_1$ contains $x$, then $A_1$ contains $x$ but not $x^*$. Otherwise, $x\notin A_1$. Because $A_1\cap S'=A_2\cap S'$ and $x\in S'$, we see that $x\notin A_2$. But then $A_2$ contains $x^*$ and not $x$. We have shown that $S'\cup\{x^*\}$ is $\mathcal{C}$-separated, and this contradicts the maximality of $S'$. We have now verified (2)(a). It is clear by the definition of $\prec$ and $S'$ that the restriction of $\prec$ to $S'$ is a total order, completing the proof of (2). \end{proof}

For a set $S$ and a chain $\mathcal{C}\subseteq\mathcal{P}(S)$, let us agree to call the partial order $\prec$ defined in Lemma \ref{1000} the \emph{$\mathcal{C}$-order} on $S$. Our next lemma is certainly known. Since we could not locate a reference, we include the proof.

\begin{lemma}\label{1001} Let $\kappa$ and $\lambda$ be cardinals such that $\kappa$ is infinite and $\lambda\geq\kappa$. Then there exists a linearly ordered set $\mathbf{B}:=(B,\leq)$ and a dense subset $\mathfrak{D}$ of $B$ such that $|B|=\lambda$ and $|\mathfrak{D}|=\kappa$ if and only if there exists a set $T$ of cardinality $\kappa$ and a chain $\mathcal{C}\subseteq\mathcal{P}(T)$ of cardinality $\lambda$. \end{lemma}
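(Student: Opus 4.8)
The plan is to prove the two implications separately; the forward direction (from a dense order to a chain of sets) is the easy one. Given a linearly ordered set $\mathbf{B}=(B,\leq)$ of cardinality $\lambda$ with a dense subset $\mathfrak{D}$ of cardinality $\kappa$, I would set $T:=\mathfrak{D}$ and, for each $b\in B$, form the lower cut $A_b:=\{d\in\mathfrak{D}\mid d<b\}$. The collection $\mathcal{C}:=\{A_b\mid b\in B\}$ is visibly a chain in $\mathcal{P}(T)$, and density of $\mathfrak{D}$ forces $b\mapsto A_b$ to be injective: if $b<b'$, pick $d\in\mathfrak{D}$ with $b<d<b'$, so $d\in A_{b'}\setminus A_b$. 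Hence $|\mathcal{C}|=\lambda$ and $|T|=\kappa$, as required.

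For the reverse direction I would first apply Lemma \ref{1000} to the given chain $\mathcal{C}\subseteq\mathcal{P}(T)$ to extract $S'\subseteq T$ such that $A\mapsto A\cap S'$ is injective on $\mathcal{C}$ and the $\mathcal{C}$-order $\prec$ restricts to a total order on $S'$. Writing $\mathcal{C}':=\{A\cap S'\mid A\in\mathcal{C}\}$, this is a chain of cardinality $\lambda$ in $\mathcal{P}(S')$ with $|S'|\leq\kappa$. A short check shows that every member of $\mathcal{C}$, hence every member of $\mathcal{C}'$, is a $\prec$-initial segment of $S'$: if $y$ lies in some $B\in\mathcal{C}$ and $x\prec y$, then the witnessing set $A\in\mathcal{C}$ (with $x\in A$, $y\notin A$) must satisfy $A\subseteq B$ since $\mathcal{C}$ is a chain and $y\notin A$, whence $x\in B$. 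Thus I have realized the chain as $\lambda$ many initial segments of the linear order $(S',\prec)$.

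Now I would pass to a partial completion. For each $x\in S'$ put $I_x^-:=\{y\mid y\prec x\}$ and $I_x^+:=\{y\mid y\preceq x\}$, and set $B_1:=\mathcal{C}'\cup\{I_x^-,I_x^+\mid x\in S'\}$, ordered by inclusion; since there are at most $2|S'|\leq\kappa\leq\lambda$ principal cuts, $|B_1|=\lambda$. The key computation is that the principal cuts are dense enough: given $J_1\subsetneq J_2$ in $B_1$ and any $x\in J_2\setminus J_1$, one checks (using that these sets are initial segments) that $J_1\subseteq I_x^-\subsetneq I_x^+\subseteq J_2$, so unless $J_1=I_x^-$ and $J_2=I_x^+$ there is a principal cut strictly between $J_1$ and $J_2$. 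Consequently the only consecutive pairs (jumps) of $B_1$ have the form $(I_x^-,I_x^+)$ with $J_2\setminus J_1=\{x\}$ a singleton, so there are at most $|S'|\leq\kappa$ jumps in all.

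The main obstacle is precisely these jumps: a linear order admitting a dense subset must itself be dense-in-itself, yet $(S',\prec)$, and hence $B_1$, may have consecutive elements. I would resolve this by filling each jump $(I_x^-,I_x^+)$ with a fresh copy $Q_x$ of $\mathbb{Q}$ (densely ordered, without endpoints) inserted strictly between $I_x^-$ and $I_x^+$, and let $\mathbf{B}$ be the resulting linear order. Because there are at most $\kappa$ jumps, this adds only $\kappa\cdot\aleph_0=\kappa\leq\lambda$ new points, so $|B|=\lambda$; and taking $\mathfrak{D}:=\{I_x^-,I_x^+\mid x\in S'\}\cup\bigcup_x Q_x$, of cardinality at most $\kappa$ and padded up to exactly $\kappa$ using points of $B$ if necessary, yields the required dense subset. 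Density is verified by cases: non-jump gaps in $B_1$ are caught by a principal cut, former jumps are now caught by the inserted $Q_x$, and gaps meeting some $Q_x$ are caught either within $Q_x$ or by the bounding cut $I_x^{\pm}$. Here the hypotheses that $\kappa$ is infinite and $\lambda\geq\kappa$ are exactly what keep the jump-filling from inflating $\mathfrak{D}$ beyond $\kappa$ while keeping $|B_1|=\lambda$; note in particular that bounding the number of jumps by $|S'|\leq\kappa$ (rather than by $|\mathcal{C}'|=\lambda$) is the reason Lemma \ref{1000} is invoked.
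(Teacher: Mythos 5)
Your proof is correct, and while it shares the paper's overall skeleton, the densification step takes a genuinely different route. The forward direction is the same as the paper's (lower cuts of points of $B$ intersected with $\mathfrak{D}$, with density giving injectivity). For the converse, both you and the paper invoke Lemma \ref{1000} to pass to a set $S'$ of size at most $\kappa$ on which the induced order $\prec$ is total, and both observe that the members of the restricted chain are $\prec$-initial segments, so that the problem becomes one of densifying a chain of downward-closed sets. From there the paper avoids any analysis of jumps: it replaces $(S',\prec)$ wholesale by the lexicographic product $S'\times\mathbb{Q}$, notes that each $X\times\mathbb{Q}$ and each principal segment $\mathrm{seg}(z)$ is a downward-closed set with no $\vartriangleleft$-greatest element (a ``cut''), takes $B$ to be these two families ordered by inclusion, and deduces density of the at most $\kappa$ many principal segments uniformly from the no-greatest-element property. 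You instead stay inside $S'$, adjoin both principal cuts $I_x^-$ and $I_x^+$, prove that the only consecutive pairs in the resulting chain are of the form $(I_x^-, I_x^+)$ with $I_x^+\setminus I_x^-=\{x\}$, and then surgically insert copies of $\mathbb{Q}$ into these at most $|S'|\leq\kappa$ jumps. The paper's global product buys a shorter, essentially case-free density verification; your local jump-filling buys a sharper structural picture, isolating exactly where density can fail and why only $\kappa$ many repairs are needed (so that $|\mathfrak{D}|=\kappa$ survives), at the cost of the case analysis in the final density check. Your closing remark that the jumps must be counted against $|S'|\leq\kappa$ rather than $|\mathcal{C}'|=\lambda$ is exactly the point, and it is the same role that the $S'\times\mathbb{Q}$ trick plays in the paper; both arguments are complete.
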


\begin{proof} Let $\kappa$ and $\lambda$ be as stated. Suppose first that there exists a linearly ordered set $\mathbf{S}:=(S,\leq)$ and a dense subset $\mathfrak{D}$ of $S$ such that $|S|=\lambda$ and $|\mathfrak{D}|=\kappa$. Now simply set $T:=\mathfrak{D}$. For $s\in S$, let $X_s:=\{x\in\mathfrak{D}\mid x\leq s\}$. Because $\mathfrak{D}$ is dense in $S$, we see that $X_{s_1}\neq X_{s_2}$ for $s_1\neq s_2$ in $S$. Setting $\mathcal{C}:=\{X_s\mid s\in S\}$ completes the proof of the forward implication. 

For the converse, suppose that $T$ is a set of cardinality $\kappa$ and that $\mathcal{C'}\subseteq\mathcal{P}(T)$ is a chain of cardinality $\lambda$. By Lemma \ref{1000}(2), there is a set $S\subseteq T$ and a chain $\mathcal{C}\subseteq\mathcal{P}(S)$ (namely, $\mathcal{C}:=\{X\cap S\mid X\in\mathcal{C'}\}$) such that $|\mathcal{C}|=\lambda$ and the $\mathcal{C}'$-order on $S$ is total. It is clear that this implies that the $\mathcal{C}$-order $\prec$ is total on $S$ as well. We now show that every $A\in\mathcal{C}$ is \emph{closed downward} relative to $\prec$; that is, we prove that 

\begin{equation}\label{eq9}
\textrm{if} \ a\in A, s\in S, \ \textrm{and} \ s\prec a, \ \textrm{then} \ s\in A.
\end{equation} 

\noindent This is easy to see: if $s\notin A$, then by definition of $\prec$, we have $a\prec s$, which is impossible since $s\prec a$ and $\prec$ is a total order on $S$. It is immediate from the linearity of $\prec$ on $S$ that

\begin{equation}\label{eq10}
\textrm{for any} \ s\in S, \ \mathrm{seg}(s):=\{x\in S\mid x\prec s\} \ \textrm{is closed downward relative to} \ \prec,\footnote{Note that this is true for \emph{any} linearly ordered set.}
\end{equation}

\noindent and

\begin{equation}\label{eq11}
\textrm{the subsets of} \ S \ \textrm{closed downward relative to} \ \prec \ \textrm{form a chain with respect to  inclusion.}
\end{equation}

We now extend $\mathbf{S}:=(S,\prec)$ to a dense linear order as follows: let $\mathbf{Q}:=(\mathbb{Q},<)$, where $<$ is the usual order on $\mathbb{Q}$. Next, order $S\times\mathbb{Q}$ lexicographically (relative to $\prec$ and $<$, respectively). For brevity, we denote the lexicographic order on $S\times\mathbb{Q}$ by $\vartriangleleft$. It is easy to see that 

\begin{equation}\label{fme}
\textrm{for every} \ X\in\mathcal{C}, X\times\mathbb{Q} \ \textrm{is closed downward relative to} \ \vartriangleleft \ \textrm{and has no} \ \vartriangleleft\textrm{-}\textrm{greatest element}. 
\end{equation}

\noindent As in the literature, call a set $Y\subseteq S\times\mathbb{Q}$ which has no greatest element and is closed downward with respect to $\vartriangleleft$ a \emph{cut}. Let $\mathfrak{D}':=\{\mathrm{seg}(z)\mid z\in S\times\mathbb{Q}\}$, and set $B:=\{X\times\mathbb{Q}\mid X\in\mathcal{C}\}\cup\mathfrak{D}'$. It is clear that $B$ has cardinality $\lambda$. One checks immediately that every member of $\mathfrak{D}'$ has no $\vartriangleleft$-greatest element, hence by \eqref{eq10}, every member of $\mathfrak{D}'$ is a cut. Moreover, $|\mathfrak{D}'|\leq\kappa$. We claim that $\mathfrak{D}'$ is dense in $B$ relative to $\subseteq$. To wit, suppose that $C_1$ and $C_2$ are distinct members of $B$. Without loss of generality, we may assume (by \eqref{eq11} and \eqref{fme}) that $C_1\subset C_2$ (that is, $C_1$ is a proper subset of $C_2$). Let $(a_0,q_0)\in C_2\backslash C_1$. As $C_2$ has no greatest element, there exist $(a_1,q_1), (a_2,q_2)\in C_2$ such that $(a_0,q_0)\vartriangleleft(a_1,q_1)\vartriangleleft(a_2,q_2)$. Then one verifies that $C_1\subset~\mathrm{seg}((a_1,q_1))\subset C_2$. Extend $\mathfrak{D}'$ to a subset $\mathfrak{D}$ of $B$ of cardinality $\kappa$ (if necessary). Then $\mathfrak{D}$ is clearly also dense in $B$. Finally, setting $\mathbf{B}:=(B,\subseteq)$, the proof is complete.  \end{proof}

Next, we recall a well-studied definition inspired by model theory.

\begin{definition}\label{defff} Let $\kappa$ be an infinite cardinal. Define $\mathrm{ded}(\kappa):=\mathrm{sup}\{\lambda\mid$ there is a linearly ordered set of cardinality $\lambda$ with a dense subset of cardinality $\kappa\}$. \end{definition}

Lemma \ref{1001} allows us to reformulate the definition of the $\mathrm{ded}$ function in a way more suitable to our purposes.

\begin{proposition}\label{1003} For any infinite cardinal $\kappa$, $\mathrm{ded}(\kappa)=\mathrm{sup}\{\lambda\mid$ there is a chain of subsets of $($a set of cardinality$)$ $\kappa$ of cardinality $\lambda\}$. \end{proposition}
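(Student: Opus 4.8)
The plan is to prove two inequalities, showing that the two suprema in Proposition~\ref{1003} agree. Write $d_1(\kappa) := \mathrm{ded}(\kappa)$ as defined in Definition~\ref{defff} (the supremum over linearly ordered sets of cardinality $\lambda$ with a dense subset of cardinality $\kappa$), and write $d_2(\kappa)$ for the supremum over cardinals $\lambda$ for which there is a chain of subsets of a set of cardinality $\kappa$ having cardinality $\lambda$. The strategy is to leverage Lemma~\ref{1001}, which gives a biconditional linking exactly these two combinatorial objects, but only under the hypothesis $\lambda \geq \kappa$. So the bookkeeping that makes the argument work is handling the small-$\lambda$ regime separately, and then invoking Lemma~\ref{1001} in the large-$\lambda$ regime.

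\emph{First I would dispose of the trivial lower bound.} Both $d_1(\kappa)$ and $d_2(\kappa)$ are at least $\kappa$: for $d_2(\kappa)$, a set of cardinality $\kappa$ has at least $\kappa$ many subsets forming a chain (e.g.\ take an enumeration $\{t_\xi : \xi < \kappa\}$ and the initial segments), and similarly any linearly ordered set of cardinality $\kappa$ is dense in itself, giving $d_1(\kappa) \geq \kappa$. \emph{Then the main step is to show the suprema coincide on the tail $\lambda \geq \kappa$.} Fix a cardinal $\lambda \geq \kappa$. By Lemma~\ref{1001}, there exists a linearly ordered set of cardinality $\lambda$ with a dense subset of cardinality $\kappa$ \emph{if and only if} there exists a set $T$ of cardinality $\kappa$ and a chain $\mathcal{C} \subseteq \mathcal{P}(T)$ of cardinality $\lambda$. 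Thus the set of cardinals $\lambda \geq \kappa$ witnessing membership in the first supremum is exactly the set of cardinals $\lambda \geq \kappa$ witnessing membership in the second.

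\emph{The only subtlety — and the step I expect to need the most care — is confirming that the witnesses below $\kappa$ do not affect either supremum.} Since both $d_1(\kappa) \geq \kappa$ and $d_2(\kappa) \geq \kappa$ by the trivial lower bound, any witness $\lambda < \kappa$ (for either side) is dominated by $\kappa$ and hence contributes nothing to the supremum that $\kappa$ itself does not already contribute. Formally, for each side the supremum equals $\sup(\{\kappa\} \cup \{\lambda \geq \kappa : \lambda \text{ is a witness}\})$, because witnesses strictly below $\kappa$ are absorbed. Since the two witness-sets for $\lambda \geq \kappa$ coincide by Lemma~\ref{1001}, and both suprema already include $\kappa$, we conclude $d_1(\kappa) = d_2(\kappa)$, which is precisely the assertion of Proposition~\ref{1003}. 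No hard estimate is required beyond this matching of witness-sets; the entire content is the careful reduction to the range where Lemma~\ref{1001} applies.
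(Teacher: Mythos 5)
Your proposal is correct and follows essentially the same route as the paper: the paper's entire proof is ``immediate from Corollary~\ref{condensed}(1) and Lemma~\ref{1001}'', where Corollary~\ref{condensed}(1) plays exactly the role of your trivial lower bound---it supplies a chain of $\kappa^+$ subsets of a set of size $\kappa$, ensuring both suprema are governed by witnesses $\lambda\geq\kappa$, the regime in which Lemma~\ref{1001} matches the two witness sets exactly. Your version substitutes an elementary bound (the chain of initial segments of a well-ordering, giving the witness $\lambda=\kappa$) for that citation, which is if anything more self-contained, and your explicit absorption of the below-$\kappa$ witnesses is precisely the bookkeeping the paper leaves implicit.

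One slip worth correcting: under the paper's definition of density (strict betweenness, $x<d<y$), it is \emph{not} true that ``any linearly ordered set of cardinality $\kappa$ is dense in itself''---a well-order of type $\kappa$ has no element strictly between its first two points. This does not damage the argument, for two reasons. First, on the $\mathrm{ded}$ side no lower bound is actually needed: a dense subset is in particular a subset, so every witness $\lambda$ in Definition~\ref{defff} automatically satisfies $\lambda\geq\kappa$. Second, if you do want $\kappa$ itself as a witness on that side, it follows by applying Lemma~\ref{1001} to your initial-segment chain, or by taking a genuinely dense order of cardinality $\kappa$ such as $\kappa\times\mathbb{Q}$ with the lexicographic order. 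With that remark repaired or deleted, the proof is complete.
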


\begin{proof} Immediate from Corollary \ref{condensed}(1) and Lemma \ref{1001}. \end{proof}

We now recall two results on the ded function from the literature which we shall employ shortly; we refer the reader to \cite{AC} and \cite{JK} for a fairly comprehensive sampling of what is known on this function. 

\begin{lemma}[\cite{AC}, \cite{WM}]\label{lastlem} Let $\kappa$ be an infinite cardinal.
\begin{enumerate}
\item $\kappa<\mathrm{ded}(\kappa)\leq2^\kappa$,\footnote{Observe that this is an immediate consequence of Corollary \ref{condensed}(1) and Proposition \ref{1003}.} and 
\item $($Mitchell$)$ it is consistent with ZFC that if $\mathrm{cf}(\kappa)>\aleph_0$, then $\mathrm{ded}(\kappa)<2^\kappa$.
\end{enumerate}
\end{lemma}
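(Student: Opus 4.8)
The plan is to handle the two parts separately, since they are of completely different character: part (1) is an immediate formal consequence of the machinery already developed, exactly as flagged in the footnote, whereas part (2) is a genuine independence result that I would import from the literature rather than reprove.

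For part (1) I would first dispose of the upper bound. Fix a set $X$ with $|X|=\kappa$; any chain of subsets of $X$ is by definition a subfamily of the power set $\mathcal{P}(X)$, which has cardinality $2^\kappa$, so every such chain has cardinality at most $2^\kappa$. Plugging this observation into the reformulation of $\mathrm{ded}(\kappa)$ furnished by Proposition \ref{1003} yields $\mathrm{ded}(\kappa)\le 2^\kappa$ at once. For the strict lower bound I would invoke Corollary \ref{condensed}(1), which produces a set of cardinality $\kappa$ carrying a chain of $\kappa^+$ subsets. By Proposition \ref{1003} the supremum defining $\mathrm{ded}(\kappa)$ is then at least $\kappa^+$, and since $\kappa<\kappa^+$ we obtain $\kappa<\kappa^+\le\mathrm{ded}(\kappa)$. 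Combining the two inequalities gives $\kappa<\mathrm{ded}(\kappa)\le 2^\kappa$, which is the assertion of part (1).

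For part (2) the situation is entirely different: this is Mitchell's theorem on the possible values of the $\mathrm{ded}$ function, and I would not attempt to reprove it. It is established by a forcing argument producing a model of ZFC in which $\mathrm{ded}(\kappa)<2^\kappa$ holds for every $\kappa$ of uncountable cofinality, and I would simply cite \cite{WM} (together with the survey \cite{AC}) for it, remarking that it stands in sharp contrast to the countably-cofinal case. Thus the only content one actually writes out is part (1), which is routine once Proposition \ref{1003} and Corollary \ref{condensed}(1) are in hand. The genuinely hard mathematics lives entirely in part (2); accordingly, the real obstacle is not in any argument I would reproduce here but in the forcing technology underlying Mitchell's consistency result, which lies well outside the scope of this paper and is therefore quoted as a black box.
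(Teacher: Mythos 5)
Your proposal is correct and matches the paper exactly: the paper offers no written proof beyond the footnote, which attributes part (1) precisely to the combination of Corollary \ref{condensed}(1) (giving $\mathrm{ded}(\kappa)\geq\kappa^+>\kappa$ via Proposition \ref{1003}) together with the trivial bound $\mathrm{ded}(\kappa)\leq 2^\kappa$, and it likewise treats part (2) as Mitchell's consistency result, quoted from \cite{WM} (and \cite{AC}) as a black box. Nothing in your write-up diverges from this, and your part (1) argument is complete and sound.
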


Recall from Observation \ref{obs1} that if $R$ is any ring, then $\mathrm{c.dim}(R)\leq2^{|R|}$. Applying Proposition \ref{1003} above yields a better bound.

\begin{proposition}\label{better bound} Let $R$ be an infinite ring of cardinality $\kappa$. Then $\mathrm{c.dim}(R)\leq\mathrm{ded}(\kappa)$. \end{proposition}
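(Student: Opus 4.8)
The plan is to bound the length of each individual chain of prime ideals by $\mathrm{ded}(\kappa)$ and then pass to the supremum. By Definition \ref{def1}, $\mathrm{c.dim}(R)$ is precisely the supremum of $\ell(\mathcal{C})$ as $\mathcal{C}$ ranges over the nonempty chains of prime ideals of $R$. Since a supremum of a family of cardinals each bounded above by a fixed cardinal $\mu$ is itself at most $\mu$, it suffices to show that $\ell(\mathcal{C})\leq\mathrm{ded}(\kappa)$ for every such chain $\mathcal{C}$. The crucial observation is that every prime ideal of $R$ is in particular a subset of $R$, so any chain of prime ideals of $R$ is simultaneously a chain of subsets of the set $R$, which has cardinality $\kappa$.

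First I would dispose of the infinite chains. Suppose $\mathcal{C}$ is an infinite chain of prime ideals of $R$. By the conventions fixed in the introduction, the length of an infinite chain is its cardinality, so $\ell(\mathcal{C})=|\mathcal{C}|$. As just noted, $\mathcal{C}$ is a chain of subsets of a set of cardinality $\kappa$, so $|\mathcal{C}|$ is one of the cardinals $\lambda$ appearing in the set over which the supremum in Proposition \ref{1003} is taken. Hence $\ell(\mathcal{C})=|\mathcal{C}|\leq\mathrm{ded}(\kappa)$ directly from the reformulation in Proposition \ref{1003}.

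Next I would handle the finite chains, which is where the distinction between ``length'' and ``cardinality'' must be respected. If $\mathcal{C}$ is a finite chain, then $\ell(\mathcal{C})$ is a non-negative integer, and in particular $\ell(\mathcal{C})<\aleph_0\leq\kappa$. By Lemma \ref{lastlem}(1) we have $\kappa<\mathrm{ded}(\kappa)$, so $\ell(\mathcal{C})<\kappa<\mathrm{ded}(\kappa)$ as well. Combining the two cases, $\ell(\mathcal{C})\leq\mathrm{ded}(\kappa)$ holds uniformly over all chains $\mathcal{C}$, and taking the supremum gives $\mathrm{c.dim}(R)\leq\mathrm{ded}(\kappa)$, as desired.

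I do not anticipate a genuine obstacle here: the result is essentially a direct translation of the $\mathrm{ded}$ bound through the fact that prime ideals are subsets. The only point requiring mild care is the bookkeeping around the definition of length, namely that for finite chains the length counts containments rather than elements while for infinite chains it equals the cardinality; but in both regimes the bound by $\mathrm{ded}(\kappa)$ is immediate, the finite case being subsumed by $\kappa<\mathrm{ded}(\kappa)$.
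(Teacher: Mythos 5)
Your proof is correct and is essentially the paper's own argument: the paper states the proposition without a separate proof, treating it as immediate from Proposition \ref{1003} via exactly your key observation that a chain of prime ideals of $R$ is a chain of subsets of a set of cardinality $\kappa$. Your extra bookkeeping for finite chains (using $\kappa<\mathrm{ded}(\kappa)$ from Lemma \ref{lastlem}(1)) is sound and merely makes explicit what the paper leaves implicit.
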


Finally, we close this section by proving that Question \ref{qu2} has a negative answer.

\begin{theorem}\label{yetanother} It is consistent with ZFC that if $\kappa$ is a cardinal of uncountable cofinality, then there exists a valuation ring $V$ of cardinality $\kappa$ such that $\kappa<\mathrm{sc.dim}(V)<2^\kappa$. \end{theorem}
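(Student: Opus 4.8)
The plan is to observe that all the required ingredients are already in place, and that the upper bound $\mathrm{sc.dim}(V)<2^\kappa$ will come essentially for free in Mitchell's model. First I would pass to a model of ZFC witnessing Lemma \ref{lastlem}(2), so that $\mathrm{ded}(\kappa)<2^\kappa$ holds for every cardinal $\kappa$ with $\mathrm{cf}(\kappa)>\aleph_0$. Everything below takes place in this fixed model, and the point is that Mitchell's consistency statement is uniform over all $\kappa$ of uncountable cofinality, so a single model simultaneously supplies the strict inequality $\mathrm{ded}(\kappa)<2^\kappa$ for every such $\kappa$.

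Now fix a cardinal $\kappa$ of uncountable cofinality. By Corollary \ref{sun}, there is a valuation ring $V$ of cardinality $\kappa$ whose strong cardinal Krull dimension is strictly larger than $\kappa$; in particular $\kappa<\mathrm{sc.dim}(V)$, which is the lower bound we want. Since the prime ideals of a valuation ring form a chain under inclusion, Observation \ref{obs0}(3) guarantees that $\mathrm{sc.dim}(V)$ exists and equals $\mathrm{c.dim}(V)$, so we are free to pass back and forth between the two notions.

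For the upper bound I would invoke Proposition \ref{better bound}: because $V$ is infinite of cardinality $\kappa$, we have $\mathrm{c.dim}(V)\leq\mathrm{ded}(\kappa)$. Combining this with the choice of model gives $\mathrm{sc.dim}(V)=\mathrm{c.dim}(V)\leq\mathrm{ded}(\kappa)<2^\kappa$, where the final strict inequality is exactly Mitchell's property applied to $\kappa$ (using $\mathrm{cf}(\kappa)>\aleph_0$). Putting the two bounds together yields $\kappa<\mathrm{sc.dim}(V)<2^\kappa$, as desired.

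The proof is therefore short, the only real content being the assembly of results already proved. I expect the one point demanding slight care is the interplay between the lower bound from Corollary \ref{sun}, which only asserts $\mathrm{sc.dim}(V)\geq\kappa^+$, and the upper bound from the ded function: one must be certain that the chain of prime ideals produced in Corollary \ref{sun}, though genuinely longer than $\kappa$, cannot be stretched all the way to length $2^\kappa$. This is precisely what Proposition \ref{better bound} prevents, since it caps $\mathrm{c.dim}(V)$ by $\mathrm{ded}(\kappa)$ rather than by the cruder bound $2^{|V|}=2^\kappa$ of Observation \ref{obs1}; without the sharper estimate the two constraints would not close the gap.
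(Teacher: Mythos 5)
Your proposal is correct and follows essentially the same route as the paper's own proof: invoke Mitchell's consistency result (Lemma \ref{lastlem}(2)) for the model, Corollary \ref{sun} for the lower bound $\kappa<\mathrm{sc.dim}(V)$, and Proposition \ref{better bound} for the upper bound $\mathrm{c.dim}(V)\leq\mathrm{ded}(\kappa)<2^\kappa$. The only additions beyond the paper's three-line argument are harmless elaborations (the uniformity of Mitchell's model over all $\kappa$ of uncountable cofinality, and the appeal to Observation \ref{obs0}(3), which is redundant since Corollary \ref{sun} already asserts strong cardinal Krull dimension).
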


\begin{proof} By Lemma \ref{lastlem}, it is consistent that $\kappa<\mathrm{ded}(\kappa)<2^\kappa$. Corollary \ref{sun} furnishes us with a valuation ring $V$ of cardinality $\kappa$ with strong cardinal Krull dimension strictly larger than $\kappa$. An application of Proposition \ref{better bound} concludes the proof. \end{proof}

\begin{corollary}\label{ttb} If $\kappa$ is a cardinal of uncountable cofinality, then it is undecidable in ZFC whether there exists a ring $R$ of cardinality $\kappa$ and cardinal Krull dimension $2^\kappa$. \end{corollary}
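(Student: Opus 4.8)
The plan is to show Corollary \ref{ttb} follows by combining two consistency results that sandwich the desired statement from both sides, so that neither the existence nor the nonexistence of a ring of cardinality $\kappa$ and cardinal Krull dimension $2^\kappa$ is provable in ZFC. Fix a cardinal $\kappa$ of uncountable cofinality. The key observation is that for an infinite ring $R$ of cardinality $\kappa$, Proposition \ref{better bound} gives the sharp upper bound $\mathrm{c.dim}(R)\leq\mathrm{ded}(\kappa)$, so the value $2^\kappa$ can only be attained when $\mathrm{ded}(\kappa)=2^\kappa$.

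First I would handle the side on which such a ring provably exists. Under GCH we have $2^\kappa=\kappa^+$, and Corollary \ref{sun} (or equivalently Proposition \ref{prop3}) produces a valuation ring of cardinality $\kappa$ with strong cardinal Krull dimension strictly larger than $\kappa$, hence of dimension at least $\kappa^+=2^\kappa$; by Observation \ref{obs1} its dimension is exactly $2^\kappa$. Thus ZFC+GCH proves that a ring of cardinality $\kappa$ and cardinal Krull dimension $2^\kappa$ exists. Since GCH is consistent with ZFC by the work of G\"odel, this establishes that the existence statement is consistent.

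For the other side I would invoke Mitchell's result, Lemma \ref{lastlem}(2): it is consistent with ZFC that $\mathrm{cf}(\kappa)>\aleph_0$ implies $\mathrm{ded}(\kappa)<2^\kappa$. Working in such a model and using that $\kappa$ has uncountable cofinality by hypothesis, every infinite ring $R$ of cardinality $\kappa$ satisfies $\mathrm{c.dim}(R)\leq\mathrm{ded}(\kappa)<2^\kappa$ by Proposition \ref{better bound}; a ring of cardinality $\kappa$ that is finite has cardinal Krull dimension $0\neq 2^\kappa$, so no ring of cardinality $\kappa$ can have cardinal Krull dimension $2^\kappa$. Hence the nonexistence statement is also consistent with ZFC.

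Combining the two paragraphs, the statement ``there exists a ring of cardinality $\kappa$ and cardinal Krull dimension $2^\kappa$'' holds in one model of ZFC and fails in another, so it is independent of ZFC, which is exactly the assertion of Corollary \ref{ttb}. I do not anticipate a genuine obstacle here, since both halves are essentially bookkeeping on top of earlier results; the one point requiring mild care is ensuring that the hypothesis $\mathrm{cf}(\kappa)>\aleph_0$ is precisely what lets us apply Mitchell's consistency result, and that the trivial case of finite rings is excluded so that Proposition \ref{better bound} applies.
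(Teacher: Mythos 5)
Your proposal is correct and follows essentially the same route as the paper: the nonexistence side uses Mitchell's consistency result (Lemma \ref{lastlem}(2)) together with the bound $\mathrm{c.dim}(R)\leq\mathrm{ded}(\kappa)$ of Proposition \ref{better bound}, and the existence side uses Bailey's GCH result, which the paper cites as Theorem \ref{lem1}(2) and which you simply re-derive inline from Corollary \ref{sun}, GCH, and Observation \ref{obs1} --- exactly the paper's own proof of that theorem. (Your caveat about finite rings is vacuous, since a cardinal of uncountable cofinality is infinite, but this does no harm.)
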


\begin{proof} By Lemma \ref{lastlem}, it is consistent that $\mathrm{ded}(\kappa)<2^\kappa$. In that case, if $R$ is any ring of cardinality $\kappa$, then $\mathrm{c.dim}(R)\leq\mathrm{ded}(\kappa)<2^\kappa$. Conversely, Theorem \ref{lem1}(2) yields the consistency of the existence of a valuation ring $V$ of cardinality $\kappa$ and strong cardinal Krull dimension $2^\kappa$. \end{proof}

\section{Polynomial Rings}

Employing some of the machinery extablished in the previous section, we now consider Question \ref{qu1} in the context of polynomial rings. We begin by establishing lower and upper bounds on the cardinal Krull dimension of polynomial rings and algebras over a division ring, respectively.

\begin{theorem}\label{anothernew} Suppose that $\kappa$ is an infinite cardinal.
\begin{enumerate}
\item If $R$ is any ring, then $\mathrm{ded}(\kappa)\leq\mathrm{c.dim}(R[X_i\mid i<\kappa])$. 
\item If $R$ is an algebra of dimension $\kappa$ $($as a left vector space$)$ over a division ring $D$, then $\mathrm{c.dim}(R)\leq\mathrm{ded}(\kappa)$. 
\end{enumerate}
\end{theorem}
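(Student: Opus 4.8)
The plan is to derive both inequalities from Proposition \ref{1003}, which recasts $\mathrm{ded}(\kappa)$ as the supremum of the cardinalities of chains of subsets of a set of cardinality $\kappa$. For the lower bound (1) it then suffices, given any chain $\mathcal{C}$ of subsets of the index set $I=\kappa$, to manufacture a chain of prime ideals of $R[X_i\mid i<\kappa]$ of cardinality $|\mathcal{C}|$; and for the upper bound (2) it suffices to inject every chain of prime ideals of $R$, in an inclusion-preserving way, into the subsets of a fixed $\kappa$-element set.

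For (1), first fix a prime ideal $\mathfrak{p}$ of $R$: such an ideal exists because any maximal two-sided ideal of $R$ is prime (its quotient is simple, hence prime), and maximal ideals exist by Zorn's Lemma since $1\neq 0$. The indeterminates $X_i$ are central, so for each $A\subseteq I$ the two-sided ideal $P_A:=\langle \mathfrak{p}\cup\{X_i\mid i\in A\}\rangle$ satisfies $R[X_i\mid i\in I]/P_A\cong (R/\mathfrak{p})[X_i\mid i\in I\setminus A]$. As a polynomial ring over a prime ring is again prime, each $P_A$ is a prime ideal, and visibly $A\subsetneq A'$ forces $P_A\subsetneq P_{A'}$. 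Thus $A\mapsto P_A$ sends a chain $\mathcal{C}$ of subsets of $I$ to a chain of prime ideals of the same cardinality, so $\mathrm{c.dim}(R[X_i\mid i<\kappa])\geq|\mathcal{C}|$. Taking the supremum over all such $\mathcal{C}$ and invoking Proposition \ref{1003} gives $\mathrm{c.dim}(R[X_i\mid i<\kappa])\geq\mathrm{ded}(\kappa)$.

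For (2), the key observation is that a prime ideal of the $D$-algebra $R$, being a two-sided (in particular left) ideal, is a $D$-subspace of $R$ viewed as a left vector space of dimension $\kappa$; hence any chain of prime ideals is a chain of subspaces. Fix a basis $\{b_\xi\mid\xi<\kappa\}$ indexed by the ordinal $\kappa$, and for nonzero $v=\sum_\xi c_\xi b_\xi$ (finite support, $c_\xi\in D$ acting on the left) let $\mathrm{lead}(v)$ be the largest $\xi$ with $c_\xi\neq 0$. For a subspace $W$ put $L(W):=\{\mathrm{lead}(v)\mid v\in W\setminus\{0\}\}\subseteq\kappa$. I claim $W_1\subsetneq W_2$ implies $L(W_1)\subsetneq L(W_2)$: if instead $L(W_1)=L(W_2)$, choose $v\in W_2\setminus W_1$ of least leading index $\xi$; since $\xi\in L(W_2)=L(W_1)$ there is $w\in W_1$ with $\mathrm{lead}(w)=\xi$, and a suitable left scalar multiple $v-(c_\xi d_\xi^{-1})w\in W_2\setminus W_1$ has strictly smaller leading index, contradicting minimality. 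As $L$ is inclusion-preserving and, by the claim, injective on chains, a chain $\mathcal{W}$ of prime ideals maps to a chain of subsets of $\kappa$ of cardinality $|\mathcal{W}|$; Proposition \ref{1003} then yields $|\mathcal{W}|\leq\mathrm{ded}(\kappa)$, and taking the supremum over all $\mathcal{W}$ gives $\mathrm{c.dim}(R)\leq\mathrm{ded}(\kappa)$.

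The routine points—that central indeterminates give the displayed quotient, that polynomial rings over prime rings are prime, and that left ideals of a $D$-algebra are $D$-subspaces—I would dispatch briefly. The genuine obstacle is the strict monotonicity of the leading-index map $L$ in (2). Its proof rests on the well-ordering of $\kappa$ (ensuring leading indices exist and that strictly descending sequences of them terminate) together with a Gaussian-elimination-style cancellation that must be carried out with \emph{left} scalars, since $D$ need not be central; arranging this reduction so that it stays inside the $D$-subspace structure of the ideals is the step demanding the most care.
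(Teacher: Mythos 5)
Your proof is correct, and while part (1) matches the paper, part (2) takes a genuinely different route. For (1) the paper fixes a maximal ideal $M$ of $R$, notes $R/M$ is simple hence prime, cites Lam's Proposition 10.18 to get that $(R/M)[X_i\mid i<\kappa]$ is prime, observes that each $\langle S\rangle$ for $S$ in a chain $\mathcal{V}$ of subsets of the variables is prime (the quotient being again a polynomial ring over $R/M$), and pulls the chain back along $R[X_i\mid i<\kappa]/M[X_i\mid i<\kappa]\cong(R/M)[X_i\mid i<\kappa]$; your ideals $P_A=\langle\mathfrak{p}\cup\{X_i\mid i\in A\}\rangle$ are exactly these pulled-back primes built directly upstairs, so the difference is only packaging. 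For (2) the paper argues by contradiction: given a chain $\mathcal{C}$ of primes of length $\lambda>\mathrm{ded}(\kappa)$, it invokes Lemma \ref{1000}(2) to extract (via Zorn's Lemma) a $\mathcal{C}$-separated set $S\subseteq R$ on which the $\mathcal{C}$-order is total, forces $|S|>\kappa$ using Proposition \ref{1003}, and then shows $S$ is $D$-linearly independent --- a dependence $0=d_0s_0+\cdots+d_ns_n$ with $s_0\prec\cdots\prec s_n$ would place $s_n$ in the ideal $P_{n-1}$ separating $s_{n-1}$ from $s_n$, using that $D$ is a division ring --- contradicting $\dim_D R=\kappa$. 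You instead fix a basis indexed by $\kappa$ and build the strictly monotone leading-index map $W\mapsto L(W)$, obtaining a direct order-embedding of any chain of $D$-subspaces (primes being left ideals, hence subspaces) into $\mathcal{P}(\kappa)$, then apply Proposition \ref{1003} with no contradiction argument. Note that both proofs in fact only use the subspace structure of the ideals, so both establish the same underlying fact about chains of subspaces; your echelon argument is the more direct and constructive of the two and dispenses with Lemma \ref{1000}, whereas the paper's route reuses that lemma, which it needs elsewhere anyway (e.g., in Lemma \ref{1001}). The delicate step you flagged is handled correctly: the left multiple $c_\xi d_\xi^{-1}$ kills the $\xi$-coefficient since $(c_\xi d_\xi^{-1})d_\xi=c_\xi$, the difference $v-(c_\xi d_\xi^{-1})w$ stays in $W_2\setminus W_1$ because $W_1$ is a $D$-subspace containing $(c_\xi d_\xi^{-1})w$, and the well-ordering of $\kappa$ legitimizes the minimal-leading-index choice.
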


\begin{proof} Let $\kappa$ be an infinite cardinal. 

\vspace{.1 in}

(1) Let $R$ be a ring and $\mathcal{V}$ be a chain of subsets of $\{X_i\mid i<\kappa\}$. Next, let $M$ be a maximal ideal of $R$. Then $R/M$ is a simple ring, hence prime. By Proposition 10.18 of \cite{LAM}, $(R/M)[X_i\mid i<\kappa]$ is also prime. But then for every $S\in\mathcal{V}$, $\langle S\rangle$ is a prime ideal of $(R/M)[X_i\mid i<\kappa]$ since $(R/M)[X_i\mid i<\kappa]/\langle S\rangle$ is isomorphic to a polynomial ring over $R/M$. It follows that there is a chain of prime ideals in $(R/M)[X_i\mid i<\kappa]$ of length $|\mathcal{V}|$. Recall from The Fundamental Theorem on Ring Homomorphisms that $(R/M)[X_i\mid i<\kappa]\cong R[X_i\mid i<\kappa]/M[X_i\mid i<\kappa]$. But then there is a chain of prime ideals of $R[X_i\mid i<\kappa]$ of length $|\mathcal{V}|$, each containing $M[X_i\mid i<\kappa]$. This establishes claim (1).

\vspace{.1 in}

\noindent (2) Suppose now that $R$ is an algebra of dimension $\kappa$ over a division ring $D$. Proceeding by contradiction, suppose that $\mathrm{ded}(\kappa)<\mathrm{c.dim}(R)$. Then there exists a cardinal $\lambda>\mathrm{ded}(\kappa)$ and a chain $\mathcal{C}$ of prime ideals of $R$ of length $\lambda$. By Lemma \ref{1000}(2), there is a set $S\subseteq R$ and a chain $\mathcal{C}'$ of subsets of $S$ such that the $\mathcal{C}$-order $\prec$ restricted to $S$ is total and such that $|\mathcal{C}'|=\lambda$.  

Our next claim is that 

\begin{equation}\label{bee3}
|S|>\kappa.
\end{equation}

\noindent To see this, suppose that $|S|\leq\kappa$. Recall above that there is a chain $\mathcal{C}'$ of subsets of $S$ of cardinality $\lambda>\mathrm{ded}(\kappa)$. Then certainly there exists a chain of subsets of $\kappa$ of cardinality $\lambda>\mathrm{ded}(\kappa)$, contradicting Proposition \ref{1003}.  

We now prove that 

\begin{equation}\label{bee4}
S \ \textrm{is linearly independent over} \ D. 
\end{equation}

\noindent Suppose not. Because $\prec$ is a total order on $S$, there exist distinct elements $s_0\prec s_1\cdots\prec s_n$ of $S$ and nonzero $d_0,\ldots,d_n\in D$ such that 

\begin{equation}\label{bee5}
0=d_0s_0+\cdots+d_{n}s_{n}. 
\end{equation}

\noindent By definition of $\prec$, for each $i$, $0\leq i< n$, we may choose $P_i\in\mathcal{C}$ such that $s_i\in P_i$ but $s_{i+1}\notin P_i$. Observe that $P_0\subsetneq P_1\cdots\subsetneq P_{n-1}$. Equation \eqref{bee5} and the fact that $D$ is a division ring yield that $s_n\in P_{n-1}$, a contradiction.

We have shown that there exists a linearly independent subset $S$ of $R$ of size greater than the dimension of $R$ as a $D$-algebra. This is absurd, and the proof is complete.  \end{proof}

Next, we extend Proposition \ref{prop1} from the Introduction.

\begin{corollary}\label{thm2} Let $\kappa$ be an infinite cardinal and let $R$ be a ring. 
\begin{enumerate}
\item If $|R|\leq\kappa$, then $\mathrm{c.dim}(R[X_i\mid i<\kappa])=\mathrm{ded}(\kappa)$. 
\item If $R$ is a division ring, then $\mathrm{c.dim}(R[X_i\mid i<\kappa])=\mathrm{ded}(\kappa)$. 
\item If $|R|\leq\kappa$ and $\kappa$ is PSL, then $\mathrm{sc.dim}(R[X_i\mid i<\kappa])=2^\kappa.$ 
\item If $|R|\leq\kappa$ and GCH holds, then $\mathrm{sc.dim}(R[X_i\mid i<\kappa])=2^\kappa$.
\end{enumerate}
\end{corollary}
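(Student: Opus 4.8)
The plan is to obtain each part by combining the two bounds of Theorem \ref{anothernew} with the cardinal-arithmetic facts already in hand, treating the four items in order and reducing the later ones to the earlier. The conceptual distinction to keep in mind is that parts (1) and (2) only compute $\mathrm{c.dim}$, because $\mathrm{ded}(\kappa)$ is a supremum that need not be attained, whereas parts (3) and (4) assert the \emph{strong} dimension, so there one must produce an actual chain of prime ideals realizing the value.

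For part (1), the lower bound $\mathrm{ded}(\kappa)\leq\mathrm{c.dim}(R[X_i\mid i<\kappa])$ is exactly Theorem \ref{anothernew}(1), valid for any ring $R$. For the reverse inequality I would first record the routine count $|R[X_i\mid i<\kappa]|=\kappa$: the monomials form a set of size $\kappa$ (finitely supported functions $\kappa\to\mathbb{N}$), a polynomial is a finite $R$-combination of monomials, and so by Fact \ref{cardfact} the total is $\kappa\cdot|R|=\kappa$ when $|R|\leq\kappa$. Since this ring is infinite of cardinality $\kappa$, Proposition \ref{better bound} gives $\mathrm{c.dim}(R[X_i\mid i<\kappa])\leq\mathrm{ded}(\kappa)$, and the two bounds coincide.

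For part (2) the subtlety is that a division ring $R$ may have cardinality far exceeding $\kappa$, so Proposition \ref{better bound} is unavailable; the trick is to measure size by dimension rather than by cardinality. The lower bound is again Theorem \ref{anothernew}(1). For the upper bound, observe that $R[X_i\mid i<\kappa]$ is an algebra over the division ring $R$ whose dimension, as a left $R$-vector space, equals the number of monomials, namely $\kappa$. Theorem \ref{anothernew}(2) then yields $\mathrm{c.dim}(R[X_i\mid i<\kappa])\leq\mathrm{ded}(\kappa)$, completing (2).

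For part (3) I would first note that PSL-ness forces $\mathrm{ded}(\kappa)=2^\kappa$: by Corollary \ref{condensed}(3) there is a chain of $2^\kappa$ subsets of a set of cardinality $\kappa$, so Proposition \ref{1003} gives $\mathrm{ded}(\kappa)\geq2^\kappa$, while Lemma \ref{lastlem}(1) gives the reverse. With part (1) this already shows $\mathrm{c.dim}(R[X_i\mid i<\kappa])=2^\kappa$. The one genuinely new ingredient---and the main obstacle---is upgrading $\mathrm{c.dim}$ to $\mathrm{sc.dim}$, that is, exhibiting an \emph{actual} chain of prime ideals of length $2^\kappa$ rather than merely a supremum. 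Here I would feed the explicit chain $\mathcal{C}$ of $2^\kappa$ subsets from Corollary \ref{condensed}(3), viewed as a chain $\mathcal{V}$ of subsets of the variable set $\{X_i\mid i<\kappa\}$, directly into the construction in the proof of Theorem \ref{anothernew}(1): that construction produces from any chain $\mathcal{V}$ of subsets of the variables a genuine chain of prime ideals of length exactly $|\mathcal{V}|$. Taking $|\mathcal{V}|=2^\kappa$ realizes the dimension, so $\mathrm{sc.dim}(R[X_i\mid i<\kappa])=2^\kappa$. Finally, part (4) reduces to part (3): under GCH one has $2^\alpha=\alpha^+\leq\kappa$ for every infinite cardinal $\alpha<\kappa$ (and $2^\alpha<\kappa$ trivially for finite $\alpha$), which is precisely the statement that $\kappa$ is PSL, so part (3) applies.
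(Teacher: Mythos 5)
Your proposal is correct and follows essentially the same route as the paper: parts (1) and (2) combine Theorem \ref{anothernew} with Proposition \ref{better bound} (respectively with the dimension count of the polynomial ring as a left $R$-vector space), part (3) feeds the explicit chain of $2^\kappa$ subsets from Corollary \ref{condensed}(3) into the construction in the proof of Theorem \ref{anothernew}(1) to realize an actual chain of prime ideals, and part (4) reduces to (3) via the observation that GCH makes every infinite cardinal PSL. The only difference is that you spell out details the paper leaves implicit (the cardinality of $R[X_i\mid i<\kappa]$, the $\kappa$-dimensionality over a division ring, and why $\mathrm{ded}(\kappa)=2^\kappa$ for PSL $\kappa$), which is harmless.
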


\begin{proof} Assume $\kappa$ is an infinite cardinal and $R$ is a ring. 

\vspace{.1 in}

(1) If $|R|\leq\kappa$, then the ring $R[X_i\mid i<\kappa]$ has cardinality $\kappa$. Proposition \ref{better bound} and Theorem \ref{anothernew}(1) yield the desired result. 

\vspace{.1 in}

(2) Immediate from Theorem \ref{anothernew}.

\vspace{.1 in}

(3) Assume that $\kappa$ is PSL. By Corollary \ref{condensed}(3), there is a chain $\mathcal{C}$ of $2^\kappa$ subsets of $\kappa$. By the proof of Theorem \ref{anothernew}(1), we obtain a chain of $2^\kappa$ prime ideals of $R[X_i\mid i<\kappa]$. 

\vspace{.1 in}

(4) Under GCH, every infinite cardinal is PSL. \end{proof}

We now present an analog of Theorem \ref{yetanother} for polynomial rings. 

\begin{corollary}\label{cccc} It is consistent with ZFC that if $\kappa$ is a cardinal of uncountable cofinality and $R$ is a ring of size at most $\kappa$, then $\kappa<\mathrm{c.dim}(R[X_i\mid i<\kappa])<2^\kappa$. \end{corollary}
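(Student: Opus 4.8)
The plan is to combine Corollary \ref{thm2}(1) with the consistency result from Mitchell stated in Lemma \ref{lastlem}(2). The target statement asserts the consistency of a conjunction over all cardinals $\kappa$ of uncountable cofinality, so I would work inside a model of ZFC witnessing Lemma \ref{lastlem}(2)---that is, a model in which $\mathrm{ded}(\kappa)<2^\kappa$ holds for every $\kappa$ with $\mathrm{cf}(\kappa)>\aleph_0$. (Mitchell's result, as quoted, gives the consistency of this implication; the phrasing ``it is consistent with ZFC that if $\mathrm{cf}(\kappa)>\aleph_0$, then $\mathrm{ded}(\kappa)<2^\kappa$'' is exactly a single model in which the implication holds for all such $\kappa$.)

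Working in that model, let $\kappa$ be an arbitrary cardinal of uncountable cofinality, and let $R$ be any ring with $|R|\leq\kappa$. First I would invoke Corollary \ref{thm2}(1), which asserts $\mathrm{c.dim}(R[X_i\mid i<\kappa])=\mathrm{ded}(\kappa)$ whenever $|R|\leq\kappa$. Next I would apply Lemma \ref{lastlem}(1), giving the ZFC-provable lower bound $\kappa<\mathrm{ded}(\kappa)$, to obtain $\kappa<\mathrm{c.dim}(R[X_i\mid i<\kappa])$. Finally, in the chosen model, since $\mathrm{cf}(\kappa)>\aleph_0$, Mitchell's result supplies $\mathrm{ded}(\kappa)<2^\kappa$, whence $\mathrm{c.dim}(R[X_i\mid i<\kappa])=\mathrm{ded}(\kappa)<2^\kappa$. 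Chaining these inequalities yields $\kappa<\mathrm{c.dim}(R[X_i\mid i<\kappa])<2^\kappa$, as desired.

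The only subtlety worth flagging is the logical form: the desired statement has the shape ``it is consistent that \emph{for all} $\kappa$ of uncountable cofinality, \emph{for all} rings $R$ of size at most $\kappa$, \ldots,'' and the argument must take place uniformly inside one model rather than producing a separate model for each $\kappa$. Since Lemma \ref{lastlem}(2) is itself stated as the consistency of a universally quantified implication over all $\kappa$ of uncountable cofinality, a single model suffices, and both ingredients---Corollary \ref{thm2}(1) and the lower bound $\kappa<\mathrm{ded}(\kappa)$---are outright ZFC theorems that hold in every model. Thus the whole deduction is a straightforward composition, and I do not anticipate any genuine obstacle beyond correctly reading off the quantifier structure. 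The proof is essentially a three-line citation of earlier results.
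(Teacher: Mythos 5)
Your proof is correct and follows exactly the paper's route: the paper's proof of this corollary is simply ``Immediate from Corollary \ref{thm2}(1) and Lemma \ref{lastlem},'' and you have spelled out precisely that deduction, including the quantifier-structure point about working in a single model witnessing Mitchell's result.
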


\begin{proof} Immediate from Corollary \ref{thm2}(1) and Lemma \ref{lastlem}. \end{proof}

The next two facts from the literature will enable us to prove another corollary along with a proposition.

\begin{fact}\label{fact4}[\cite{RG}, p. 106] Let $R$ and $S$ be commutative rings with $S$ integral over $R$. Then either $R$ and $S$ both have infinite cardinal Krull dimension or $\mathrm{sc.dim}(R)=\mathrm{sc.dim}(S)$. \end{fact}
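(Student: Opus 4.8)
The plan is to deduce the statement from the classical behaviour of prime chains under an integral extension $R\subseteq S$ of commutative rings, governed by the three standard properties found in \cite{RG}: \emph{Lying Over} (every prime of $R$ has the form $Q\cap R$ for some prime $Q$ of $S$), \emph{Going Up} (any finite chain $P_0\subsetneq\cdots\subsetneq P_n$ of primes of $R$, together with a prime $Q_0$ of $S$ lying over $P_0$, extends to a chain $Q_0\subseteq\cdots\subseteq Q_n$ with $Q_i\cap R=P_i$), and \emph{Incomparability} (distinct comparable primes of $S$ have distinct contractions to $R$). These three pillars carry the entire argument.

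First I would transfer chains downward from $S$ to $R$ by contraction. If $\{Q_i\}$ is any strictly increasing chain of primes of $S$, then contraction preserves the inclusions, and Incomparability prevents any of them from collapsing; hence $\{Q_i\cap R\}$ is a strictly increasing chain of primes of $R$ of the same length. Thus every chain of primes of $S$ descends to a chain in $R$ of equal length, and this step is valid for chains of arbitrary length. Next I would lift finite chains upward from $R$ to $S$: given $P_0\subsetneq\cdots\subsetneq P_n$ in $R$, Lying Over supplies a prime over $P_0$ and Going Up completes it to $Q_0\subseteq\cdots\subseteq Q_n$ lying over the $P_i$, with each inclusion automatically strict because the contractions $P_i$ are. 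Together these show that, for each finite $n$, $R$ admits a chain of primes of length $n$ if and only if $S$ does.

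To finish, I would observe that a commutative ring has infinite cardinal Krull dimension exactly when it has prime chains of every finite length $n$ (an infinite chain contains finite subchains of all lengths, and unbounded finite lengths push the supremum to at least $\aleph_0$). By the previous paragraph this property passes between $R$ and $S$, so either both rings have infinite cardinal Krull dimension or neither does. In the latter case $\mathrm{c.dim}(R)$ and $\mathrm{c.dim}(S)$ are finite, and the length-by-length equivalence forces them to equal the same integer; Observation \ref{obs0}(1) then guarantees that $\mathrm{sc.dim}(R)$ and $\mathrm{sc.dim}(S)$ both exist and equal this common value, yielding $\mathrm{sc.dim}(R)=\mathrm{sc.dim}(S)$.

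The main obstacle is one of restraint rather than difficulty: the natural temptation is to prove the sharper equality $\mathrm{c.dim}(R)=\mathrm{c.dim}(S)$ directly, which in the infinite-dimensional case would require lifting an entire transfinite chain of primes from $R$ to $S$. Since Going Up as stated only extends a chain one step at a time, such a lift would demand a transfinite induction with unions taken at limit stages, relying on the facts that the union of a chain of primes is prime and that contraction commutes with these unions. Routing the argument through finite chains sidesteps this entirely, which is precisely why the conclusion is framed as a dichotomy rather than as an unconditional equality of cardinal Krull dimensions.
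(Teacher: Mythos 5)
Your proof is correct, and in fact the paper offers no proof of this statement at all: it is quoted as a known fact from Gilmer \cite{RG}, and your argument is essentially the classical one underlying that citation, namely Lying Over, Going Up, and Incomparability for integral extensions of commutative rings. The two places where you had to do genuine work beyond quoting the classical finite-dimensional theorem are handled properly: first, the translation into the paper's cardinal framework via the observation that $\mathrm{c.dim}$ is infinite if and only if there are prime chains of every finite length (covering both the case of an infinite chain, which has finite subchains of all sizes, and the case of unbounded finite chain lengths); second, the passage from equality of finite $\mathrm{c.dim}$'s to equality of $\mathrm{sc.dim}$'s via Observation \ref{obs0}(1), which is exactly what the statement's phrasing requires. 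Your closing remark is also well judged: contraction plus Incomparability does give $\mathrm{c.dim}(S)\leq\mathrm{c.dim}(R)$ for chains of arbitrary cardinality, but the reverse lift of an infinite chain is problematic because a chain of primes need not be well-ordered by inclusion, so single-step Going Up (even with unions at limit stages) does not obviously apply along it --- which is precisely why the fact is stated as a dichotomy rather than as an unconditional equality of cardinal Krull dimensions.
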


\begin{fact}\label{fact5}[\cite{GO}, Corollary 4] Let $R$ be an infinite ring satisfying ACC on (two-sided) ideals. Then there exists a prime ideal $P$ of $R$ such that $|R/P|=|R|$. \end{fact}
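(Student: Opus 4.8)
The plan is to exploit the Noetherian structure to reduce the size of $R$ to the sizes of prime quotients, by producing a filtration of $R$ whose successive quotients are bimodules that are \emph{finitely generated as bimodules} — which is exactly what ACC on two-sided ideals supplies. First I would record the standard consequence of ACC on two-sided ideals that there exist (not necessarily distinct) prime ideals $P_1,\ldots,P_n$ of $R$ with $P_1P_2\cdots P_n=0$. This is a ``maximal counterexample'' argument: if the collection $\mathcal{S}$ of two-sided ideals containing no finite product of primes were nonempty, ACC would furnish a maximal member $M$; now $M\neq R$ (as $R$ trivially contains a product of primes), and $M$ is not itself prime (a prime ideal is the one-term product consisting of itself), so there are ideals $A,B\supsetneq M$ with $AB\subseteq M$; maximality forces $A$ and $B$ each to contain a product of primes, whence so does $M$, a contradiction. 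Applying this to the zero ideal gives the claim. Writing $N$ for the prime radical $\bigcap\{Q\mid Q\text{ a prime ideal of }R\}$, each $P_i\supseteq N$, so $N^n\subseteq P_1\cdots P_n=0$; thus $N$ is nilpotent with $N^n=0$.

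Next I would pass to the semiprime quotient $R/N$. Every prime $Q$ of $R$ contains some $P_i$ (since $P_1\cdots P_n=0\subseteq Q$ and $Q$ is prime), so $R$ has only finitely many minimal primes $Q_1,\ldots,Q_s$ (a subfamily of the $P_i$), and $\bigcap_j Q_j=N$. The diagonal map $r+N\mapsto(r+Q_j)_{j}$ then embeds $R/N$ into $\bigoplus_{j=1}^s R/Q_j$, so $|R/N|\leq\max_j|R/Q_j|$ by Fact \ref{cardfact}; since each $R/Q_j$ is a quotient of $R/N$ we get equality. Hence it suffices to prove $|R/N|=|R|$: then some minimal prime $Q_{j_0}$ satisfies $|R/Q_{j_0}|=|R|$, and $P:=Q_{j_0}$ is the desired prime.

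The heart of the matter is the bound $|R|=|R/N|$, which I would extract from the filtration $R\supseteq N\supseteq N^2\supseteq\cdots\supseteq N^n=0$. Each factor $N^j/N^{j+1}$ is annihilated on both sides by $N$ (because $N\cdot N^j=N^j\cdot N=N^{j+1}$), hence is an $(R/N)$-bimodule, and — the crucial point — it is finitely generated as a bimodule, since ACC on two-sided ideals makes $N^j$ a finitely generated two-sided ideal $N^j=\sum_{t=1}^p Rx_tR$. For a bimodule $M=\sum_{t=1}^p SxS$ over a ring $S$ one has $|SxS|\leq\max(|S|,\aleph_0)$, as every element is a finite sum $\sum a_kxb_k$ with $a_k,b_k\in S$; hence $|N^j/N^{j+1}|\leq\max(|R/N|,\aleph_0)$. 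Because $R$ is infinite and the filtration is finite, some factor is infinite; but a finitely generated bimodule over a \emph{finite} ring is finite (a finite ring has finite additive exponent, forcing bounded exponent on the bimodule, so a finitely generated such bimodule is finite), so $R/N$ must be infinite. The estimate then reads $|N^j/N^{j+1}|\leq|R/N|$ for every $j$, and $|R|=\max_j|N^j/N^{j+1}|\leq|R/N|\leq|R|$ gives $|R|=|R/N|$.

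I expect the main obstacle to be precisely this last cardinality estimate, and it is where the two-sided hypothesis is used essentially. The tempting shortcut of filtering by the products $P_1\cdots P_i$ directly produces factors $(P_1\cdots P_{i-1})/(P_1\cdots P_i)$ that are only \emph{one-sided} $R/P_i$-modules, and these need not be finitely generated under ACC on two-sided ideals alone, so their cardinalities cannot be controlled. Routing through the nilpotent radical is what replaces one-sided modules by \emph{two-sided} bimodules, for which finite generation (and hence the cardinal bound) is guaranteed.
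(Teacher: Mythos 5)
Your argument is correct, but note that there is nothing in the paper to compare it against: Fact \ref{fact5} is quoted from \cite{GO} (Corollary 4) as a black box, with no proof included. Judged on its own terms, your proof is a complete, essentially self-contained derivation under the paper's standing conventions (associative, unital, $1\neq 0$). The chain of reductions is sound: ACC on two-sided ideals yields primes with $P_1\cdots P_n=0$ via the maximal-counterexample argument, whence the prime radical $N$ is nilpotent; every prime contains some $P_i$, so the minimal primes $Q_1,\dots,Q_s$ form a finite subfamily of the $P_i$ with $\bigcap_j Q_j=N$ and $R/N$ embeds in $\bigoplus_j R/Q_j$; and the finite filtration $R\supseteq N\supseteq\cdots\supseteq N^n=0$, with each $N^j$ finitely generated as a two-sided ideal by ACC, gives $|N^j/N^{j+1}|\leq\max(|R/N|,\aleph_0)$, while the bounded-exponent observation rules out $R/N$ being finite. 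Your closing diagnosis---that the two-sided hypothesis is spent precisely on making the filtration factors \emph{finitely generated bimodules}, which a one-sided filtration by the products $P_1\cdots P_i$ would not supply---is exactly the right one. Two small matters of hygiene: (i) the inequality $|R/N|\leq\max_j|R/Q_j|$ appeals to Fact \ref{cardfact} and is only valid once the maximum is known to be infinite, so you should record the embedding bound as $|R/N|\leq\prod_j|R/Q_j|$ and pass to the maximum only after establishing that $R/N$ is infinite; this is an ordering issue in the exposition, not a gap, since all the pieces are in hand. (ii) You use unitality twice without flagging it: first, to guarantee that $R$ possesses a prime ideal at all (a maximal ideal, furnished by ACC or Zorn since $1\neq 0$, is prime because $R^2=R$), so that $R\notin\mathcal{S}$ in the maximal-counterexample argument; second, in writing a finitely generated ideal as $\sum_{t=1}^p Rx_tR$ and a finitely generated bimodule as $\sum_{t=1}^p Sx_tS$---non-unitally these sums must include $\mathbb{Z}$-spans of the generators, and then the claim that a finitely generated bimodule over a finite ring is finite fails (e.g., $\mathbb{Z}$ with zero actions). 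Both uses are legitimate here, since the paper applies Fact \ref{fact5} only in sections where rings are unital with $1\neq 0$, but \cite{GO} states the result for general associative rings, so your proof is marginally less general than the cited one.
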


\begin{corollary} \label{cor1} The following hold:
\begin{enumerate} 
\item Let $S$ be a commutative domain or a commutative Noetherian ring of PSL cardinality $\kappa>\aleph_0$. Then there exists an extension of rings $R\subseteq S\subseteq T$ such that $|R|=|T|=\kappa$ and $\mathrm{sc.dim}(R)=\mathrm{sc.dim}(T)=2^\kappa$. 
\item Assume GCH. Then for every cardinal $\alpha>\aleph_0$: if $S$ is a commutative domain or a commutative Noetherian ring of cardinality $\alpha$, then there exist $R$ and $T$ as in $($1$)$.
\end{enumerate}
\end{corollary}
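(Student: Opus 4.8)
The plan is to build $R$ and $T$ separately, taking $T$ to be a polynomial extension of $S$ and $R$ to be a subring of $S$ that \emph{surjects onto} a polynomial ring. For $T$ I would simply set $T := S[X_i \mid i < \kappa]$. Since $|S| = \kappa$ and $\kappa$ is PSL, Corollary \ref{thm2}(3) gives $\mathrm{sc.dim}(T) = 2^\kappa$ at once, while $|T| = \kappa$ and $S \subseteq T$; this half needs no case distinction and works verbatim for both hypotheses on $S$.

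For $R$ the guiding principle is that a surjection $\phi \colon R \twoheadrightarrow A$ of commutative rings induces an inclusion-preserving bijection between $\Spec(A)$ and the set of primes of $R$ containing $\ker\phi$, so any chain of primes of $A$ of length $\lambda$ lifts to a chain of primes of $R$ of length $\lambda$. Hence it suffices to exhibit a subring $R \subseteq S$ with $|R| = \kappa$ that surjects onto a ring of strong cardinal Krull dimension $2^\kappa$, and a polynomial ring in $\kappa$ variables (Corollary \ref{thm2}(3)) is the natural target. To produce such an $R$, I would first pass to a domain quotient of $S$: in the domain case take $P := (0)$, and in the Noetherian case invoke Fact \ref{fact5} to obtain a prime $P$ with $|S/P| = |S| = \kappa$. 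Either way $D := S/P$ is a commutative domain of uncountable cardinality $\kappa$. The next step is the observation that such a $D$ contains $\kappa$ elements algebraically independent over its prime subring: a maximal algebraically independent subset $\{\bar t_i\}$ of $D$ over the prime field $k$ is a transcendence basis of $\mathrm{Frac}(D)$, and since $|\mathrm{Frac}(D)| = \kappa > \aleph_0$ while $|k| \le \aleph_0$, this basis must have cardinality $\kappa$.

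Now $\Pi[\bar t_i \mid i < \kappa]$ (with $\Pi$ the prime subring of $D$) is a polynomial ring, so by Corollary \ref{thm2}(3) it has strong cardinal Krull dimension $2^\kappa$. I would lift each $\bar t_i$ to some $t_i \in S$ and set $R := \Pi_S[t_i \mid i < \kappa]$, where $\Pi_S$ is the prime subring of $S$. The quotient map $S \to S/P$ then carries $R$ onto $\Pi[\bar t_i \mid i < \kappa]$, since it sends $\Pi_S$ onto $\Pi$ and $t_i$ to $\bar t_i$. By the surjection principle, $R$ has a chain of primes of length $2^\kappa$; as $|R| = \kappa$ (it is generated by $\kappa$ elements over a countable ring, yet surjects onto a ring of size $\kappa$), Observation \ref{obs1} gives $\mathrm{c.dim}(R) \le 2^{|R|} = 2^\kappa$, so $\mathrm{sc.dim}(R) = 2^\kappa$ and $R \subseteq S$, as required.

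The step I expect to be the main obstacle is the passage from $S/P$ back to $S$. One cannot in general lift the polynomial ring $\Pi[\bar t_i]$ to an \emph{isomorphic} subring of $S$: if $S$ has characteristic $0$ but $S/P$ has characteristic $p$, then $S$ contains no copy of $\mathbb{F}_p$, and the char-$p$ polynomial ring does not embed in $S$ at all. The surjection principle is precisely what circumvents this difficulty, since we never need $R$ to be isomorphic to the polynomial subring of $S/P$, only that $R$ map \emph{onto} it — and that holds automatically from the choice of the $t_i$, independently of any characteristic matching. Finally, part (2) is immediate from part (1): under GCH every infinite cardinal is PSL (as noted in the proof of Corollary \ref{thm2}(4)), so for any $\alpha > \aleph_0$ part (1) applies with $\kappa = \alpha$.
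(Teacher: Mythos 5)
Your proposal is correct and takes essentially the same route as the paper's proof: set $T := S[X_i \mid i < \kappa]$ and invoke Corollary \ref{thm2}(3), and for $R$ reduce to a domain quotient (trivially in the domain case, via Fact \ref{fact5} in the Noetherian case), extract $\kappa$ algebraically independent elements over the countable prime subring, and lift the resulting chain of $2^\kappa$ primes back along the quotient map. The only cosmetic difference is that you take $R$ to be generated by lifts $t_i$ of the $\bar t_i$ over the prime subring of $S$, whereas the paper takes $R$ to be the full preimage of the polynomial subring of $S/P$ (so that $P \subseteq R$ and $R/P$ \emph{is} that subring); both versions rest on exactly the chain-lifting correspondence you state, and your explicit remark about the characteristic-mismatch obstacle is precisely why the paper, too, lifts chains rather than embedding an isomorphic polynomial ring in $S$.
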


\begin{proof} (1) Let $\kappa>\aleph_0$ be PSL. Suppose first that $S$ is a commutative domain of  cardinality $\kappa$, and let $P(S)$ be the prime subring of $S$. Note that $P(S)$ is countable. Basic set theory yields a subset $\mathcal{X}$ of $S$ of cardinality $\kappa$ which is algebraically independent over $P(S)$. Hence to within isomorphism, $P(S)[X_i\mid i<\kappa]$ is a subring of $S$. Making this identification, we have $P(S)[X_i\mid i<\kappa]\subseteq S\subseteq S[X_i\mid i<\kappa]$. We now invoke (3) of Corollary \ref{thm2} to conclude the existence of $R$ and $T$ with the desired properties. Next, suppose that $S$ is Noetherian of cardinality $\kappa$. By Fact \ref{fact5}, there is a prime ideal $P$ of $S$ such that $|S/P|=|S|$. By our work above, there is a subring of $S/P$ of cardinality $\kappa$ and strong cardinal Krull dimension $2^\kappa$. This subring is of the form $R/P$ for some subring $R$ of $S$ containing $P$. But then $R$ has a chain of $2^\kappa$ prime ideals, each containing $P$. It follows that $\mathrm{sc.dim}(R)=2^\kappa$. As before, we set $T:=S[X_i\mid i<\kappa]$. 

(2) Immediate from (1) since under GCH, every infinite cardinal is PSL.\end{proof}

\begin{remark} A natural question is whether we can drop the assumption that $S$ is either a domain or Noetherian in Corollary \ref{cor1}. The answer is no. To see this, let $\kappa$ be an arbitrary infinite cardinal, and set $R:=\mathbb{Z}[X_i\mid i<\kappa]/\langle X_iX_j\mid i,j<\kappa\rangle$. Observe that the variables embed injectively into $R$ in the canonical way, and hence $R$ has cardinality $\kappa$. Moreover, every element $r\in R$ can be expressed in the form $r=m+x$, where $m$ is an integer and $x^2=0$ (modulo $\langle X_iX_j\mid i,j<\kappa\rangle$). It follows that $R$ is an integral extension of $\mathbb{Z}$ (we identify $\mathbb{Z}$ with its image in $R$), and hence every subring of $R$ has strong Krull dimension $1$, by Fact \ref{fact4}. \end{remark}

It is easy to see via an elementary set-theoretic argument that if $F$ is an infinite field and $K$ is an algebraic field extension of $F$, then $F$ and $K$ have the same cardinality. Employing a technique used in the proof of Corollary \ref{cor1}, we generalize this result.

\begin{proposition}\label{prop5} Let $R$ be an infinite commutative ring, and suppose that $S$ is a commutative Noetherian integral extension of $R$. Then $|R|=|S|$. \end{proposition}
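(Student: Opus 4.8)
The plan is to prove $|R|=|S|$ by establishing the two inequalities separately. Since $R\subseteq S$, the bound $|R|\leq|S|$ is immediate, so all the content lies in showing $|S|\leq|R|$. The tempting direct argument — every $s\in S$ is a root of a monic polynomial over $R$, and there are only $\max(|R|,\aleph_0)=|R|$ such polynomials — nearly succeeds, but it breaks down because $S$ need not be a domain: in a ring with zero divisors a degree-$n$ monic polynomial can have more than $n$ roots, so the root-counting bound collapses. The Noetherian hypothesis is precisely what lets me route around this difficulty, and this is the main obstacle the proof must overcome.

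First I would invoke Fact \ref{fact5}. Since $|S|\geq|R|\geq\aleph_0$, the ring $S$ is infinite, and being commutative Noetherian it satisfies ACC on ideals; hence there is a prime ideal $P$ of $S$ with $|S/P|=|S|$. Set $\mathfrak{p}:=P\cap R$. Then $R/\mathfrak{p}$ embeds into the domain $S/P$, and $S/P$ is integral over $R/\mathfrak{p}$: lifting an element of $S/P$ to some $s\in S$, an integral equation $s^n+r_{n-1}s^{n-1}+\cdots+r_0=0$ reduces modulo $P$ to a monic equation over $R/\mathfrak{p}$, the leading coefficient $1$ surviving the reduction. Thus I have reduced to an integral extension $\overline{R}:=R/\mathfrak{p}\hookrightarrow\overline{S}:=S/P$ of integral domains, with $|\overline{S}|=|S|$ and $|\overline{R}|\leq|R|$.

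On the domain $\overline{S}$ the root-counting argument is now valid. Each element of $\overline{S}$ is a root of some monic polynomial over $\overline{R}$, and such a polynomial of degree $n$ has at most $n$ roots in the domain $\overline{S}$; assigning to each $s\in\overline{S}$ a witnessing monic polynomial together with the index of $s$ among its finitely many roots gives an injection of $\overline{S}$ into a set of cardinality at most $\max(|\overline{R}|,\aleph_0)$ (the number of monic polynomials over $\overline{R}$, each contributing only finitely many root-slots). Consequently $|S|=|\overline{S}|\leq\max(|\overline{R}|,\aleph_0)\leq\max(|R|,\aleph_0)=|R|$, the final equality holding because $R$ is infinite. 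Note that this inequality chain uniformly absorbs the potentially awkward case in which $\overline{R}$ is finite: there $\max(|\overline{R}|,\aleph_0)=\aleph_0$ forces $|S|=\aleph_0$, and $\aleph_0\leq|R|\leq|S|=\aleph_0$ again gives equality. Combining $|S|\leq|R|$ with the trivial $|R|\leq|S|$ completes the proof.
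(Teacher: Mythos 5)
Your proof is correct, and it shares its skeleton with the paper's: both apply Fact \ref{fact5} to produce a prime ideal $P$ of $S$ with $|S/P|=|S|$, and both then pass to the integral extension of domains $R/(R\cap P)\subseteq S/P$. Where the two diverge is in how this reduction is finished. The paper argues by contradiction: it passes further to the quotient fields $F\subseteq K$, invokes the elementary fact (recorded just before the proposition) that an algebraic extension of an \emph{infinite} field has the same cardinality as the base field, concludes that $F$ would have to be finite, and then derives a contradiction from $K$ being an uncountable algebraic extension of a finite field. You instead stay at the level of domains and run the root-counting argument directly: a monic polynomial over $\overline{R}$ of degree $n$ has at most $n$ roots in the domain $\overline{S}$, so $|\overline{S}|\leq\max(|\overline{R}|,\aleph_0)$, giving $|S|\leq|R|$ outright. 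Your version is more self-contained --- no detour through quotient fields, no appeal to an external fact about algebraic field extensions, and the $\max(\cdot,\aleph_0)$ bound absorbs the finite-residue-ring case uniformly, so no case split or contradiction is needed. The paper's version, in exchange, gets to lean on a standard field-theoretic fact it has already stated, keeping the in-proof argument short; but that fact is in substance proved by the same root count, so your proof essentially inlines it. You also correctly diagnose up front why the naive count over $S$ itself fails (zero divisors destroy the ``at most $n$ roots'' bound), which is precisely the obstruction that the Noetherian hypothesis, via Fact \ref{fact5}, removes in both arguments.
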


\begin{proof} Let $R$ and $S$ be as postulated. Suppose by way of contradiction that $|S|>|R|$. Let $P$ be a prime ideal of $S$ such that $|S|=|S/P|$ (furnished by Fact \ref{fact5}). Then $S/P$ is an integral extension of $R/(R\cap P)$ (\cite{RG}, Lemma 10.1). Observe that $S/P$ and $R/(R\cap P)$ are domains and that $|S/P|=|S|>|R|\geq|R/(R\cap P)|$. Let $K$ and $F$ be the quotient fields of $S/P$ and $R/(R\cap P)$, respectively. Then $K$ is an algebraic field extension of $F$, yet $|K|>|F|$ (note that the cardinality of any commutative domain is the same as the cardinality of its quotient field). From the remarks preceding the statement of Proposition \ref{prop5}, we deduce that $F$ is finite. Because $R$ is infinite and $|S|>|R|$, $S$ is uncountable. But then $K$ is an uncountable algebraic field extension of the finite field $F$, and we have reached a contradiction. \end{proof}

We close this section by showing that it is consistent with ZFC that, in a sense, Conjecture \ref{con1} can fail quite badly. As a full proof of the set-theoretic assertions would take us too far afield, we give an outline of the argument and refer the reader to the popular texts \cite{TJ} and \cite{KK} for references on forcing. We also remark that neither the next proposition nor the technique of forcing will be utilized in the remainder of this note.

\begin{proposition}\label{forceprop} There is a model of ZFC where CH fails, yet for every infinite cardinal $\kappa$, there is a commutative ring $R$ of cardinality $\kappa$ and strong cardinal Krull dimension $2^\kappa$. \end{proposition}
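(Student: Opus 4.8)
The plan is to build a single model of ZFC via forcing in which CH fails but in which, simultaneously, enough instances of $2^\lambda > \kappa$ hold at low levels that every infinite $\kappa$ still admits a commutative ring of cardinality $\kappa$ and strong cardinal Krull dimension $2^\kappa$. The key observation is that Proposition \ref{prop4'} (together with Fact \ref{fact2} and Fact \ref{fact3}) already produces, for any infinite $\kappa$, a valuation ring of cardinality $\kappa$ and strong cardinal Krull dimension at least $2^\lambda$, where $\lambda$ is the least cardinal with $2^\lambda > \kappa$. Thus the only thing preventing $\mathrm{sc.dim} = 2^\kappa$ on the nose is the gap between $2^\lambda$ and $2^\kappa$. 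If the model is arranged so that for every infinite $\kappa$ the least $\lambda$ with $2^\lambda > \kappa$ in fact satisfies $2^\lambda = 2^\kappa$, then Proposition \ref{prop4'} delivers strong cardinal Krull dimension exactly $2^\kappa$, while $\neg\mathrm{CH}$ is forced separately.

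First I would recall that the continuum function $\kappa \mapsto 2^\kappa$ is constrained in ZFC essentially only by monotonicity ($\kappa \le \mu \Rightarrow 2^\kappa \le 2^\mu$) and K\"onig's theorem ($\mathrm{cf}(2^\kappa) > \kappa$). Easton's theorem shows that on regular cardinals the continuum function may be prescribed almost arbitrarily subject to these two constraints. The cleanest route is to force a model where the continuum function is, say, constant on a long block of cardinals, or more simply where $2^\kappa$ depends only coarsely on $\kappa$. A concrete target: force $2^{\aleph_0} = \aleph_2$ (so CH fails) while arranging that for each infinite $\kappa$ the pseudo strong limit behavior fails in a controlled way — specifically so that the least $\lambda$ with $2^\lambda > \kappa$ carries $2^\lambda = 2^\kappa$. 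One verifies this holds automatically at strong limit and PSL cardinals, where $\lambda = \kappa$; the work is to guarantee it at the remaining cardinals by choosing an Easton function that keeps $2^\lambda = 2^\kappa$ whenever $\kappa$ sits in the interval $[\lambda, 2^\lambda)$.

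Next I would assemble the ring: given such a model and an infinite $\kappa$, let $\lambda$ be least with $2^\lambda > \kappa$; apply Proposition \ref{prop4'} to obtain a totally ordered abelian group $\mathbf{G}$ with $|G| = \kappa$ and at least $2^\lambda$ isolated subgroups; invoke Fact \ref{fact2} to realize $\mathbf{G}$ as the value group of a valuation $v$ on a field $F$ of cardinality $\kappa$; and use Fact \ref{fact3} to transfer the chain of isolated subgroups to a chain of prime ideals of the associated valuation ring $V$. Since the number of isolated subgroups is at least $2^\lambda = 2^\kappa$ and, by Observation \ref{obs1}, $\mathrm{c.dim}(V) \le 2^{|V|} = 2^\kappa$, we conclude $\mathrm{sc.dim}(V) = 2^\kappa$. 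The valuation ring $V$ is commutative of cardinality $\kappa$, as required, and $\neg\mathrm{CH}$ is guaranteed by the choice $2^{\aleph_0} = \aleph_2$.

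The main obstacle will be the set-theoretic bookkeeping: one must exhibit a single Easton (class) forcing whose generic extension satisfies \emph{both} $\neg\mathrm{CH}$ \emph{and} the uniformity condition ``$2^\lambda = 2^\kappa$ whenever $\lambda$ is least with $2^\lambda > \kappa$'' simultaneously at \emph{every} infinite $\kappa$, not merely at a single cardinal. This requires checking that the prescribed continuum function is monotone and respects K\"onig's theorem at all regular cardinals, and that the behavior at singular cardinals (where Easton's theorem is silent and SCH-type constraints intrude) does not spoil the condition; handling the singular case is the delicate point, and the safest approach is to force over a ground model of GCH so that singulars inherit the cleanest possible behavior via Silver-type arguments. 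Because the detailed forcing verification is standard but lengthy, I would state the required continuum-function behavior as a lemma, cite \cite{TJ} and \cite{KK} for the existence of an Easton extension realizing it, and then give the short ring-theoretic deduction above in full.
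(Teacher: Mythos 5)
Your proposal is correct, but it takes a different route from the paper in both halves of the argument. Set-theoretically, the paper does not invoke Easton forcing at all: it simply adds $\aleph_2$ Cohen reals to a model of GCH, obtaining $2^{\aleph_0}=2^{\aleph_1}=\aleph_2$ and $2^\beta=\beta^+$ for all $\beta\geq\aleph_1$. Algebraically, the paper then splits into cases by cardinal: every $\beta\geq\aleph_2$ is PSL in that model, so Corollary \ref{thm2}(3) supplies a polynomial ring of cardinality $\beta$ and strong cardinal Krull dimension $2^\beta$, while $\beta\in\{\aleph_0,\aleph_1\}$ is handled separately via $F[X_i\mid i<\omega]$ and the chain of $2^{\aleph_0}=\aleph_2=2^\beta$ primes from the proof of Proposition \ref{prop1}. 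You instead isolate the exact continuum-function condition needed --- that $2^\lambda=2^\kappa$ where $\lambda$ is least with $2^\lambda>\kappa$ --- and feed it uniformly into the valuation-ring machine (Proposition \ref{prop4'}, Facts \ref{fact2} and \ref{fact3}, with Observation \ref{obs1} pinning the chain length at exactly $2^\kappa$); this avoids any case split, produces commutative domains, and makes transparent why the result is really about the shape of the continuum function rather than about GCH. What you lose is economy: the Easton apparatus, and with it your worry about singular cardinals, is unnecessary, because your own ``concrete target'' is exactly the paper's Cohen model, and it already satisfies your uniformity condition ($\lambda=\aleph_0$ with $2^{\aleph_0}=2^{\aleph_1}=\aleph_2$ for $\kappa\in\{\aleph_0,\aleph_1\}$, and $\lambda=\kappa$ for all PSL cardinals $\kappa\geq\aleph_2$, GCH holding there including at singulars since the forcing is a set-sized ccc poset over a GCH ground model). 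If you commit to that single model and cite the relevant exercise in \cite{KK} --- as the paper does --- your deferred lemma becomes a one-line verification and the delicate singular case dissolves; given that the paper's own argument is only a sketch at the same point, your level of rigor is otherwise comparable.
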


\begin{proof} [Sketch of Proof] Begin with a countable transitive model of ZFC + GCH, and add $\aleph_2$ many Cohen reals. A straightforward (Cohen) forcing argument shows that GCH fails only at $\omega$ (this is an immediate consequence of \cite{KK}, exercise G1, for example). In other words, we obtain a model of ZFC which satisfies
\begin{enumerate}
\item $2^{\aleph_0}=\aleph_2$, and
\item for all cardinals $\beta\geq\aleph_1$, $2^\beta=\beta^+$ (the least cardinal larger than $\beta$). 
\end{enumerate}

\noindent We claim that every infinite cardinal $\beta\geq\aleph_2$ is PSL in this model. It follows immediately from (1) and (2) above that $\aleph_2$ is PSL. Suppose that $\beta>\aleph_2$, and assume that $\alpha<\beta$. We must show that $2^\alpha\leq\beta$. If $\alpha\geq\aleph_1$, then by (2), $2^\alpha=\alpha^+\leq\beta$. Otherwise, $\alpha \leq \aleph_0$. In that case, (1) yields $2^\alpha \leq \aleph_2<\beta$. Therefore $\beta$ is PSL, and hence Corollary \ref{thm2}(3) implies that for every cardinal $\beta\geq\aleph_2$, there is a commutative ring of cardinality $\beta$ and strong cardinal Krull dimension $2^\beta$. Suppose now that $\beta=\aleph_0$ or $\beta=\aleph_1$, and let $F$ be a field of cardinality $\beta$. Observe that the polynomial ring $F[X_i\mid i<\omega]$ has cardinality $\beta$. Moreover, the proof of Proposition \ref{prop1} along with (1) and (2) above shows that the strong cardinal Krull dimension of $F[X_i\mid i<\omega]$ exists and is equal to $2^{\aleph_0}=\aleph_2=2^\beta$. The proof is now concluded. \end{proof}

\section{Leavitt Path Algebras}

With the help of Leavitt path algebras, we give another construction of a class of (mostly noncommutative) rings whose strong cardinal Krull dimensions exceed their cardinalities. For the reader who is not familiar with Leavitt path algebras, we begin with a very brief introduction to these rings. 

\vspace{.1 in}

\begin{center}\emph{Throughout this section, rings will be assumed only to be associative, not necessarily unital.}\end{center}

\subsection{More on Partially Ordered Sets} 

Suppose $\mathbf{P}:=(P, \leq)$ is a partially ordered set. We call $\mathbf{P}$ \emph{downward directed} if $P$ is nonempty, and for all $p,q \in P$ there exists $r \in P$ such that $p \geq r$ and $q\geq r$. An element $x\in P$ is called a \emph{least}, respectively \emph{greatest}, element of $P$ if $x \leq y$, respectively $y \leq x$, for all $y \in P$. Let $(P_1, \leq_1)$ and $(P_2, \leq_2)$ be two partially ordered sets, and let $f:P_1 \to P_2$ be a function. Then $f$ is \emph{order-preserving} if $x\leq_1 y$ implies that $f(x) \leq_2 f(y)$ for all $x,y \in P_1$. Also, $f$ is \emph{order-reflecting} if $f(x) \leq_2 f(y)$ implies that $x \leq_1 y$ for all $x, y \in P_1$. In the latter case, $f$ is necessarily injective, since $f(x)=f(y)$ implies that $x \leq_1 y$ and $y \leq_1 x$. If $f$ is both order-preserving and order-reflecting, then it is an \emph{order-embedding}. If $f$ is an order-embedding and bijective, then $f$ is an \emph{order-isomorphism}. We write $(P_1, \leq_1) \cong (P_2, \leq_2)$ if there is an order-isomorphism between the two partially ordered sets.

\subsection{The Leavitt Path Algebra Construction}

A \emph{directed graph} $\mathbf{E}:=(E^0,E^1,\so, \ra)$ consists of two disjoint sets $E^0,E^1$ (the elements of which are called \emph{vertices} and \emph{edges}, respectively), together with functions $\so,\ra\colon E^1 \to E^0$, called \emph{source} and \emph{range}, respectively. We shall refer to directed graphs as simply ``graphs" from now on. A \emph{path} $p$ of \emph{length} $n$ in $\mathbf{E}$ is a sequence $e_1\cdots e_n$ of edges $e_1,\dots, e_n \in E^1$ such that $\ra(e_i)=\so(e_{i+1})$ for $i\in \{1,\dots,n-1\}$. We view the elements of $E^0$ as paths of length $0$, and denote by $\pth(\mathbf{E})$ the set of all paths in $\mathbf{E}$. Given a vertex $v \in E^0$, if the set $\so^{-1}(v) = \{e\in E^1\mid \so(e)=v\}$ is finite but nonempty, then $v$ is said to be a \emph{regular vertex}. 

Next, let $K$ be a field and $\mathbf{E}$ be a nonempty graph\footnote{That is, $E^0$ is nonempty.}. Augment $E^1$ by adjoining a new edge $e^*$ for every $e\in E_1$ such that

\begin{enumerate}
\item for $e_1\neq e_2$, $e_1^*\neq e_2^*$, and 
\item $(E^1)^*:=\{e^*\mid e\in E_1\}$ is disjoint from $E^1$ and $E^0$.
\end{enumerate}

\noindent Extend $\so$ and $\ra$ to $E^1\cup(E^1)^*$ by setting $\so(e^*):=\ra(e)$ and $\ra(e^*):=\so(e)$. The \emph{Leavitt path algebra} $L_K(\mathbf{E})$ \emph{of $\mathbf{E}$} is the $K$-algebra freely generated over $K$ by $E^0\cup E^1\cup(E^1)^*$, modulo the following relations (where $\delta$ denotes the Kronecker delta):

\smallskip

{(V)} \ \ \ \  $vw = \delta_{v,w}v$ for all $v,w\in E^0$,

{(E1)} \ \ \ $\so(e)e=e\ra(e)=e$ for all $e\in E^1$,

{(E2)} \ \ \ $\ra(e)e^*=e^*\so(e)=e^*$ for all $e\in E^1$,

{(CK1)}  \hspace{.01in}  $e^*f=\delta _{e,f}\ra(e)$ for all $e,f\in E^1$, and

{(CK2)} \  $v=\sum_{e\in \so^{-1}(v)} ee^*$ for every regular vertex $v\in E^0$.   

\smallskip

For all $v \in E^0$ we define $v^*:=v$, and for all paths $p:=e_1 \cdots e_n$ ($e_1, \dots, e_n \in E^1$) we set $p^*:=e_n^* \cdots e_1^*$. With this notation, every element of $L_K(\mathbf{E})$ can be expressed (though not necessarily uniquely) in the form $\sum_{i=1}^n k_ip_iq_i^*$ for some $k_i \in K$ and $p_i,q_i \in \pth (\mathbf{E})$. It is also easy to establish that $L_K(\mathbf{E})$ is unital (with multiplicative identity $\sum_{v\in E^0}v$) if and only if $E^0$ is a  finite set.    

For additional information and background on Leavitt path algebras see, e.g., \cite{ASurvey}.  

\subsection{Associating Leavitt Path Algebras to Posets}

We begin with several definitions and a theorem from~\cite{AAMS}, which we record next.

\begin{definition} \label{mainconstr}
Given a partially ordered set $\mathbf{P}:=(P, \leq)$ we define the graph $\mathbf{E}_\mathbf{P}$ as follows:
$$E^0_\mathbf{P}:=\{v_p \mid p \in P\} \ \ \ \ \text{ and }  \ \ \ \ \ E^1_\mathbf{P}:=\{e_{p,q}^i \mid i \in \N,  \text{ and } p,q \in P \text{ satisfy } p>q\},$$ where $\so(e_{p,q}^i):=v_p$ and $\ra(e_{p,q}^i):=v_q$ for all $i \in \N$.
\end{definition}

Intuitively, we build $\mathbf{E}_\mathbf{P}$ from $\mathbf{P}$ by viewing the elements of $P$ as vertices and putting countably-infinitely many edges between them whenever the corresponding elements are related in $\mathbf{P}$. It turns out that the prime spectrum of a Leavitt path algebra constructed from a graph of this form has a structure closely related to that of $\mathbf{P}$. The next two definitions will enable us to state this fact precisely.

\begin{definition} \label{precdef}
Let $(P, \leq)$ be a partially ordered set. Define a binary relation $\preceq$ on the collection of nonempty subsets of $P$ as follows. Given $S_1, S_2 \in \mathcal{P}(P) \setminus \{\varnothing\}$, write $S_1 \preceq S_2$ if for every $s_2 \in S_2$ there exists $s_1 \in S_1$ such that $s_1 \leq s_2$. If $S_1 \preceq S_2$ and $S_2 \preceq S_1$, then we write $S_1 \approx S_2$. 
\end{definition}

It is routine to verify that $(\mathcal{P}(P) \setminus\{\varnothing\}, \preceq)$ is a preordered set,  that $\approx$ is an equivalence relation on $\mathcal{P}(P) \setminus\{\varnothing\}$, and that  $\preceq$ induces a partial order on the set $(\mathcal{P}(P) \setminus\{\varnothing\})/\approx$ of $\approx$-equivalence classes. For $S \in \mathcal{P}(P) \setminus\{\varnothing\}$, we shall denote the $\approx$-equivalence class of $S$ by $[S]$.

\begin{definition} \label{leqAdef}
Given a partially ordered set $(P, \leq)$, let $$\AT(P):=P \cup \{x_{[S]} \mid S \subseteq P \text{ is downward directed with no least element}\}.$$ Further, we extend $\leq$ to a binary relation $\leq_\AT$ on $\AT(P)$, as follows.  For all $p,q \in \AT(P)$ let $p \leq_\AT q$ if one of the following holds:
\begin{enumerate}
\item[$(1)$] $p,q \in P$ and $p \leq q$,
\item[$(2)$] $p \in P$, $q = x_{[S]} \in \AT(P)\setminus P$, and $p \leq s$ for all $s \in S$,
\item[$(3)$] $p = x_{[S]} \in \AT(P)\setminus P$, $q \in P$, and $s \leq q$ for some $s \in S$, or
\item[$(4)$] $p,q \in \AT(P)\setminus P$, $p = x_{[S]}$, $q = x_{[T]}$, and $S \preceq T$. 
\end{enumerate}
\end{definition}

We conclude this subsection by recalling a useful result from \cite{AAMS} and proving a lemma.

\begin{theorem}[\cite{AAMS}, Theorem 18] \label{EPspec}
Let $(P, \leq)$ be a partially ordered set and let $K$ be a field. Then $(\AT(P),  \leq_\AT)$ is a partially ordered set, and $$(\Spec(L_K(\mathbf{E}_\mathbf{P})), \subseteq) \ \cong \  (\AT(P), \leq_\AT),$$ where $\Spec(L_K(\mathbf{E}_\mathbf{P}))$ denotes the set of prime ideals of $L_K(\mathbf{E}_\mathbf{P})$.
\end{theorem}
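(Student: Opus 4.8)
\textbf{Plan of proof for Theorem~\ref{EPspec}.}

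The plan is to prove the two assertions in turn: first that $(\AT(P), \leq_\AT)$ is genuinely a partially ordered set, and second that it is order-isomorphic to $\Spec(L_K(\mathbf{E}_\mathbf{P}))$ under inclusion. For the first part I would verify reflexivity, antisymmetry, and transitivity directly from Definition~\ref{leqAdef}. Reflexivity is immediate on $P$ via case~(1), and on the added points $x_{[S]}$ via case~(4) using $S \preceq S$ (which holds since $\preceq$ is a preorder). Antisymmetry requires a short case analysis: the genuinely new content is that when $p = x_{[S]}$ and $q = x_{[T]}$ with $S \preceq T$ and $T \preceq S$, we have $S \approx T$, hence $[S] = [T]$, hence $p = q$ because the added points are indexed by $\approx$-classes; and one must also check that no $p \in P$ can be $\leq_\AT$-equivalent to some $x_{[S]} \notin P$, which follows because case~(2) together with case~(3) would force some $s \in S$ to satisfy $p \le s$ for all $s$ and $s' \le p$ for some $s'$, making $p$ a least element of a set cofinal with $S$ below it, contradicting that $S$ has no least element. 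Transitivity is the most tedious piece, since it branches into many combinations of the four defining cases; I would organize it by the ``type'' (in $P$ or not) of the three elements and repeatedly use transitivity of $\le$ on $P$, transitivity of $\preceq$, and the downward-directedness and least-element hypotheses on the indexing sets. I expect this bookkeeping, while routine, to be where the most care is needed.

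For the second and main assertion, the plan is to cite Theorem~18 of~\cite{AAMS} essentially verbatim: the construction $\mathbf{E}_\mathbf{P}$ and the poset $\AT(P)$ are precisely engineered there so that $\Spec(L_K(\mathbf{E}_\mathbf{P}))$ ordered by inclusion is order-isomorphic to $(\AT(P), \leq_\AT)$. Since this is quoted as a known result, I would not reprove it from scratch; instead I would indicate the structural reason it holds, namely that the prime ideals of a Leavitt path algebra are governed by the hereditary saturated subsets of the graph (together with certain ``breaking'' data), and that for a graph of the form $\mathbf{E}_\mathbf{P}$ these correspond on the one hand to the ``principal'' primes coming from elements $p \in P$ and on the other hand to ``limit'' primes coming from downward-directed subsets with no least element, which is exactly what the adjoined points $x_{[S]}$ encode. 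The inclusion order on primes translates into the relation $\leq_\AT$ under this correspondence.

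\textbf{Main obstacle.} The substantive difficulty is entirely in the verification that $(\AT(P), \leq_\AT)$ is a poset, and within that, in confirming antisymmetry for the mixed cases and transitivity across the case split; the isomorphism with $\Spec(L_K(\mathbf{E}_\mathbf{P}))$ is imported wholesale from~\cite{AAMS}. The only genuinely delicate point in the poset verification is ensuring that the hypothesis ``$S$ is downward directed with no least element'' is used correctly to rule out an added point $x_{[S]}$ collapsing onto an element of $P$: if such a collapse could occur, antisymmetry would fail, so I would make explicit that the ``no least element'' condition is exactly what prevents an $x_{[S]}$ from being $\leq_\AT$-equivalent to any $p \in P$.
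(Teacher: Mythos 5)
Your proposal matches the paper's treatment exactly: the paper states this result as a direct quotation of \cite{AAMS}, Theorem 18, and offers no proof of its own, so importing the isomorphism (and its structural explanation via hereditary saturated subsets) wholesale is precisely what the paper does. Your supplementary direct verification that $(\AT(P), \leq_\AT)$ is a poset is correct as sketched --- in particular, your use of the no-least-element hypothesis to rule out an $x_{[S]}$ being $\leq_\AT$-equivalent to some $p \in P$ is the right key point (one gets $p = s'$ for some $s' \in S$ by antisymmetry in $P$, forcing $p$ to be a least element of $S$) --- but the paper subsumes even that claim in the citation, so your extra work, while sound, goes beyond what the paper records.
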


\begin{lemma} \label{chain-lemma}
If $(P, \leq)$ is linearly ordered, then so is $(\AT(P), \leq_\AT)$.
\end{lemma}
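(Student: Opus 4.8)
The plan is to show that $(\AT(P), \leq_\AT)$ is linearly ordered by proving that any two elements are $\leq_\AT$-comparable, assuming $(P,\leq)$ is linear. Since Theorem~\ref{EPspec} already guarantees that $(\AT(P),\leq_\AT)$ is a partial order, I only need to verify totality. I would split into the four cases corresponding to the defining clauses (1)--(4) of Definition~\ref{leqAdef}, according to whether each of the two elements lies in $P$ or in $\AT(P)\setminus P$.

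First I would dispose of the easy case: if $p,q\in P$, then comparability is immediate from the linearity of $(P,\leq)$ via clause~(1). Next, consider $p\in P$ and $q=x_{[S]}\in\AT(P)\setminus P$, where $S$ is downward directed with no least element. I would argue by a dichotomy on whether $p$ lies below every element of $S$. If $p\leq s$ for all $s\in S$, then $p\leq_\AT q$ by clause~(2). Otherwise there is some $s\in S$ with $p\not\leq s$; by linearity of $P$ this gives $s<p$, so $s\leq p$ for some $s\in S$, whence $q\leq_\AT p$ by clause~(3). The symmetric case ($p=x_{[S]}$, $q\in P$) is handled identically with the roles reversed.

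The remaining and most delicate case is $p=x_{[S]}$ and $q=x_{[T]}$, both in $\AT(P)\setminus P$. Here comparability via clause~(4) amounts to showing that for linear $P$ one always has $S\preceq T$ or $T\preceq S$. The hard part will be this step, because $\preceq$ is defined by the mixed quantifier ``for every $t\in T$ there exists $s\in S$ with $s\leq t$'', and I must extract a comparison purely from the linearity of the ground order together with the hypothesis that $S$ and $T$ are downward directed with no least elements. My plan is to argue by contradiction: suppose neither $S\preceq T$ nor $T\preceq S$. Then there is some $t_0\in T$ with $s\not\leq t_0$ for all $s\in S$, and some $s_0\in S$ with $t\not\leq s_0$ for all $t\in T$. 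By linearity of $P$, the first condition gives $t_0<s$ for every $s\in S$, so $t_0$ is a lower bound for $S$; likewise $s_0<t$ for every $t\in T$, so $s_0$ is a lower bound for $T$. In particular $t_0<s_0$ (taking $s=s_0$) and $s_0<t_0$ (taking $t=t_0$), which is the desired contradiction. Hence $S\preceq T$ or $T\preceq S$, giving comparability of $p$ and $q$ via clause~(4).

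Having shown that every pair of elements is comparable in all four cases, I conclude that $\leq_\AT$ is a total order, so $(\AT(P),\leq_\AT)$ is linearly ordered. I would note that the only properties of $S$ and $T$ actually used in the final case are the translation of incomparability into strict inequalities afforded by linearity of $P$; the downward-directedness and absence of a least element, which are built into membership in $\AT(P)\setminus P$, are not strictly needed for comparability itself, though they are what make these symbols present in $\AT(P)$ in the first place.
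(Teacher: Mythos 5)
Your proof is correct and follows essentially the same route as the paper's: invoke Theorem~\ref{EPspec} for posethood, then verify comparability case-by-case according to the four clauses of Definition~\ref{leqAdef}. The only difference is that where the paper simply asserts in the fourth case that linearity of $P$ forces $S \preceq T$ or $T \preceq S$, you supply the (correct) contradiction argument producing $t_0 < s_0 < t_0$, which is a welcome filling-in of an omitted detail rather than a different approach.
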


\begin{proof}
By Theorem~\ref{EPspec}, $(\AT(P), \leq_\AT)$ is a poset. Now assuming that $(P, \leq)$ is linearly ordered, it suffices to take two arbitrary elements $p, q \in \AT(P)$ and prove that either $p \leq_\AT q$ or $q \leq_\AT p$. In light of Definition~\ref{leqAdef}, there are four cases to consider. 

First, suppose that $p,q \in P$. Then either $p \leq q$ or $q \leq p$, since $P$ is a chain. Hence either $p \leq_\AT q$ or $q \leq_\AT p$, by Definition~\ref{leqAdef}(1).

Second, suppose that $p \in P$ and $q = x_{[S]} \in \AT(P)\setminus P$, for some $S \subseteq P$. Since $P$ is linearly ordered, either $p \leq s$ for all $s \in S$, or $t \leq p$ for some $t \in S$. In the first case, $p \leq_\AT q$, by Definition~\ref{leqAdef}(2), while in the second case, $q \leq_\AT p$, by Definition~\ref{leqAdef}(3).

The case where $q \in P$ and $p \in \AT(P)\setminus P$ is analogous to the previous one.

Finally, suppose that $p,q \in \AT(P)\setminus P$, $p = x_{[S]}$, and $q = x_{[T]}$, for some $S, T \subseteq P$. Since $P$ is linearly ordered, we must have either $S \preceq T$ or $T \preceq S$. In the first case, $p \leq_\AT q$, while in the second case, $q \leq_\AT p$, by Definition~\ref{leqAdef}(4).
\end{proof}

\subsection{Cardinal Krull Dimension of Leavitt Path Algebras}

Our next goal is to compute the cardinality and strong cardinal Krull dimension of a Leavitt path algebra of the form $L_K(E_\mathbf{P})$.

\begin{lemma} \label{card-lemma}
Let $K$ be a field and $\mathbf{E}$ a graph. Also set $|K|:= \kappa$ and $|\mathbf{E}|:=|E^0 \cup E^1|:= \epsilon$. Then the following hold:
\begin{enumerate}
\item[$(1)$] $|L_K(\mathbf{E})| \leq \aleph_0 \cdot \kappa \cdot \epsilon^2$.
\item[$(2)$] If $\kappa \leq \epsilon$ and $\, \aleph_0 \leq \epsilon$, then $|L_K(\mathbf{E})| = \epsilon$.
\end{enumerate}
\end{lemma}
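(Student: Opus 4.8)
The plan is to establish (1) by bounding, in turn, the number of paths in $\mathbf{E}$, then the number of monomials $pq^*$, and finally the number of finite $K$-linear combinations of such monomials; part (2) then follows by combining the upper bound from (1) with a matching lower bound obtained from the nonvanishing and distinctness of the vertices and edges inside $L_K(\mathbf{E})$.

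First I would count paths. For each $n \geq 1$ there are at most $|E^1|^n$ paths of length $n$, and there are $|E^0|$ paths of length $0$; summing over all $n$ and invoking Fact~\ref{cardfact} gives
\[
|\pth(\mathbf{E})| \;\le\; |E^0| + \aleph_0 \cdot |E^1| \;\le\; \aleph_0 \cdot \epsilon,
\]
where the $\aleph_0$ absorbs the (at most countable) sum over lengths, and where this bound correctly reads $\aleph_0$ when $\epsilon$ is finite (a finite graph with a cycle has countably many paths). Write $P := |\pth(\mathbf{E})|$, so $P \le \aleph_0 \cdot \epsilon$. Since every element of $L_K(\mathbf{E})$ is a finite $K$-linear combination of monomials $pq^*$ with $p,q \in \pth(\mathbf{E})$ (as recalled just after the definition of $L_K(\mathbf{E})$), there is a surjection from the set of finite sequences over $K \times \pth(\mathbf{E}) \times \pth(\mathbf{E})$ onto $L_K(\mathbf{E})$. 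This index set has cardinality at most $\kappa \cdot P^2$, and the number of finite sequences over a set of that cardinality is at most $\aleph_0 \cdot \kappa \cdot P^2$ by Fact~\ref{cardfact}. Hence
\[
|L_K(\mathbf{E})| \;\le\; \aleph_0 \cdot \kappa \cdot P^2 \;\le\; \aleph_0 \cdot \kappa \cdot (\aleph_0 \cdot \epsilon)^2 \;=\; \aleph_0 \cdot \kappa \cdot \epsilon^2,
\]
the last equality again by Fact~\ref{cardfact} (in particular $\aleph_0^2 = \aleph_0$). This proves (1).

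For (2), assume $\kappa \le \epsilon$ and $\aleph_0 \le \epsilon$. The upper bound from (1) together with $\epsilon^2 = \epsilon$ and Fact~\ref{cardfact} yields $|L_K(\mathbf{E})| \le \aleph_0 \cdot \kappa \cdot \epsilon = \epsilon$. For the reverse inequality it suffices to exhibit $\epsilon$ distinct elements of $L_K(\mathbf{E})$. Since $\epsilon$ is infinite and $\epsilon = |E^0| + |E^1| = \max(|E^0|,|E^1|)$, at least one of $E^0, E^1$ has cardinality $\epsilon$, and I would argue that the corresponding family of generators consists of pairwise distinct nonzero elements. For vertices, relation (V) shows they are pairwise orthogonal idempotents, so if $v,w \in E^0$ are distinct but equal as elements then $v = vw = 0$; thus distinct nonzero vertices are distinct elements. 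For edges, relation (CK1) gives $e^*e = \ra(e)$ and $e^*f = 0$ for $e \ne f$, so if $e,f \in E^1$ are distinct but equal as elements then $\ra(e) = e^*e = e^*f = 0$; thus distinct edges with nonzero range vertex are distinct elements. Both arguments reduce to the nonvanishing of the vertices of $L_K(\mathbf{E})$.

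That nonvanishing is the one genuinely nontrivial input, and the step I expect to be the main obstacle: it does not follow formally from the defining relations, but rests on the structure theory of Leavitt path algebras (for instance, the existence of a faithful representation, or the $\mathbb{Z}$-grading together with the $K$-linear independence of the vertices). I would invoke it from the standard references, e.g. \cite{ASurvey}. Granting it, whichever of $E^0, E^1$ has cardinality $\epsilon$ supplies $\epsilon$ distinct elements, so $|L_K(\mathbf{E})| \ge \epsilon$; combined with the upper bound this gives $|L_K(\mathbf{E})| = \epsilon$.
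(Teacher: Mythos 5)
Your proof is correct and follows essentially the same route as the paper: the same path count $|\pth(\mathbf{E})| \le \aleph_0 \cdot \epsilon$ and finite-linear-combination bound for (1), and for (2) the matching lower bound from the generators. The only difference is that where the paper simply asserts $E \subseteq L_K(\mathbf{E})$, you correctly unwind this to the nonvanishing of the vertices via (V) and (CK1) and import that fact from the standard literature --- a legitimate filling-in of a detail the paper takes for granted, not a different argument.
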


\begin{proof}
(1) First, note that $$|\pth (\mathbf{E})| \leq |E^0| + \sum_{n=1}^\infty |E^1|^n \leq |E^0| + \aleph_0\cdot |E^1| \leq \aleph_0 \cdot (|E^0| + |E^1|) = \aleph_0 \cdot \epsilon.$$ As mentioned above, every element of $L_K(\mathbf{E})$ can be expressed in the form $\sum_{i=1}^n k_ip_iq_i^*$ for some $k_i \in K$ and $p_i,q_i \in \pth (\mathbf{E})$. Hence $|L_K(\mathbf{E})| \leq \aleph_0 \cdot \kappa \cdot \epsilon^2$.

(2) Since $E \subseteq L_K(\mathbf{E})$, necessarily $\epsilon \leq |L_K(\mathbf{E})|$. If $\kappa \leq \epsilon$ and $\aleph_0 \leq \epsilon$, then $\aleph_0 \cdot \kappa \cdot \epsilon^2 = \epsilon,$ and hence $|L_K(\mathbf{E})| = \epsilon$, by (1).
\end{proof}

\begin{corollary} \label{card-cor}
Let $K$ be a field and $(P, \leq)$ a poset. If $|K| \leq |P|$ and $\aleph_0 \leq |P|$, then $|L_K(\mathbf{E}_\mathbf{P})| =  |P|$.
\end{corollary}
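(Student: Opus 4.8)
The plan is to apply Lemma~\ref{card-lemma}(2) to the specific graph $\mathbf{E}_\mathbf{P}$ associated to the poset $(P,\leq)$ via Definition~\ref{mainconstr}. The entire corollary is a matter of computing the parameter $\epsilon := |E^0_\mathbf{P} \cup E^1_\mathbf{P}|$ in terms of $|P|$, verifying that the two hypotheses of Lemma~\ref{card-lemma}(2) hold, and reading off the conclusion.

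First I would recall from Definition~\ref{mainconstr} that $E^0_\mathbf{P} = \{v_p \mid p \in P\}$, so that $|E^0_\mathbf{P}| = |P|$, and that $E^1_\mathbf{P} = \{e^i_{p,q} \mid i \in \N, \ p,q \in P, \ p > q\}$. The edge set is thus indexed by $\N$ together with the set of strictly comparable ordered pairs from $P$. Since the set of such pairs is a subset of $P \times P$, it has cardinality at most $|P|^2 = |P|$ (as $|P| \geq \aleph_0$ by hypothesis, using Fact~\ref{cardfact}(1)). Hence $|E^1_\mathbf{P}| \leq \aleph_0 \cdot |P| = |P|$, again by Fact~\ref{cardfact}(1). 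Combining these, I would conclude
\begin{equation*}
\epsilon = |E^0_\mathbf{P} \cup E^1_\mathbf{P}| = \max(|E^0_\mathbf{P}|, |E^1_\mathbf{P}|) = |P|,
\end{equation*}
the first equality since one of the two sets is infinite. (Note $|E^0_\mathbf{P}| = |P| \geq \aleph_0$ already forces $\epsilon \geq |P|$, giving the reverse inequality directly.)

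Having established $\epsilon = |P|$, I would verify the two hypotheses of Lemma~\ref{card-lemma}(2) with $\kappa := |K|$. The assumption $|K| \leq |P|$ gives $\kappa \leq \epsilon$, and the assumption $\aleph_0 \leq |P|$ gives $\aleph_0 \leq \epsilon$. Lemma~\ref{card-lemma}(2) then yields $|L_K(\mathbf{E}_\mathbf{P})| = \epsilon = |P|$, which is exactly the desired conclusion.

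I do not anticipate a genuine obstacle here: this corollary is a routine specialization of Lemma~\ref{card-lemma}(2), and all the cardinal arithmetic is covered by Fact~\ref{cardfact}(1). The only point requiring a moment's care is the bound on $|E^1_\mathbf{P}|$, where one must observe that the index set folds in an extra factor of $\N$ (from the superscript $i$) on top of the pairs from $P$; but since $|P|$ is infinite, this factor of $\aleph_0$ is absorbed and does not change the cardinality. Everything else is a direct substitution into the hypotheses of the preceding lemma.
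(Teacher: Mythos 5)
Your proposal is correct and follows essentially the same route as the paper: compute $|\mathbf{E}_\mathbf{P}| = |P| + |E^1_\mathbf{P}| \leq |P| + \aleph_0\cdot|P|^2 = |P|$ and then invoke Lemma~\ref{card-lemma}(2). The only difference is that you spell out the cardinal arithmetic (and the verification of the lemma's hypotheses) in slightly more detail than the paper does.
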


\begin{proof}
We observe that $$|\mathbf{E}_\mathbf{P}| = |E_\mathbf{P}^0 \cup E_\mathbf{P}^1| = |P| + |E_\mathbf{P}^1| \leq |P| + \aleph_0 \cdot |P|^2.$$ Hence if $\aleph_0 \leq |P|$, then $|\mathbf{E}_\mathbf{P}| = |P|$. Therefore $|L_K(\mathbf{E}_\mathbf{P})| =  |P|$, by Lemma~\ref{card-lemma}(2).
\end{proof}

\begin{proposition} \label{A-Krull-prop}
Let $K$ be a countable field, $(P, \leq)$ an infinite linearly ordered structure, and $R:= L_K(\mathbf{E}_\mathbf{P})$. Then $|R| = |P|$ and $\mathrm{sc.dim}(R) = |\AT(P)|$.
\end{proposition}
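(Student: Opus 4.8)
The claim has two parts: first, that $|R| = |P|$, and second, that $\mathrm{sc.dim}(R) = |\AT(P)|$. The first part should follow immediately from the cardinality results just established. Since $K$ is countable we have $|K| \leq \aleph_0 \leq |P|$ (as $P$ is infinite), so Corollary~\ref{card-cor} applies directly to give $|L_K(\mathbf{E}_\mathbf{P})| = |P|$. That disposes of the cardinality assertion with essentially no work.

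For the dimension, the key structural input is Theorem~\ref{EPspec}, which gives an order-isomorphism $(\Spec(R), \subseteq) \cong (\AT(P), \leq_\AT)$. The cardinal Krull dimension is defined as the supremum of lengths of chains of prime ideals, and strong cardinal Krull dimension additionally requires that this supremum be attained by an actual chain. Because order-isomorphisms preserve chains and their lengths, computing $\mathrm{c.dim}(R)$ amounts to finding the supremum of the cardinalities of chains (i.e.\ linearly ordered subsets) inside the poset $(\AT(P), \leq_\AT)$, and establishing strong cardinal Krull dimension amounts to exhibiting a single chain realizing that supremum.

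The crucial simplification is that $P$ is linearly ordered. By Lemma~\ref{chain-lemma}, this forces $(\AT(P), \leq_\AT)$ to be linearly ordered as well. Hence $\AT(P)$ is itself a chain, so the longest chain of prime ideals of $R$ is just all of $\Spec(R)$, which under the isomorphism has cardinality $|\AT(P)|$. This gives both that $\mathrm{c.dim}(R) = |\AT(P)|$ and that this value is attained (by the chain $\Spec(R)$ itself), so in fact $\mathrm{sc.dim}(R)$ exists and equals $|\AT(P)|$. I would phrase this by invoking part~(3) of Observation~\ref{obs0}: since the prime ideals of $R$ form a chain under inclusion, $\mathrm{sc.dim}(R)$ automatically exists, and its value is precisely the number of prime ideals, namely $|\Spec(R)| = |\AT(P)|$.

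The proof should therefore be short, and I do not anticipate a serious obstacle; the real content has been pushed into Theorem~\ref{EPspec} and Lemma~\ref{chain-lemma}. The only point requiring a small amount of care is the bookkeeping between \emph{length} of a chain (a cardinal, per the definition in the introduction) and \emph{cardinality} of the index set: once $\AT(P)$ is known to be totally ordered, the length of the maximal chain is unambiguously $|\AT(P)|$, and there is no distinction between ``supremum of chain lengths'' and ``length of the single maximal chain.'' I would make explicit that the order-isomorphism of Theorem~\ref{EPspec} transports the chain $\AT(P)$ to a chain of prime ideals of the same cardinality, and conclude.
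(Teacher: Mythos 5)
Your proposal is correct and follows the paper's own proof essentially verbatim: the paper likewise derives $|R|=|P|$ from Corollary~\ref{card-cor}, gets linearity of $(\AT(P),\leq_\AT)$ from Lemma~\ref{chain-lemma}, and concludes $\mathrm{sc.dim}(R)=|\AT(P)|$ via the order-isomorphism of Theorem~\ref{EPspec}. Your extra remarks --- invoking Observation~\ref{obs0}(3) to justify that $\mathrm{sc.dim}(R)$ exists, and noting that for the infinite chain $\Spec(R)$ length equals cardinality --- merely make explicit bookkeeping that the paper leaves tacit.
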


\begin{proof}
The equality $|R| = |P|$ follows from Corollary~\ref{card-cor}. Since $(P, \leq)$ is linearly ordered, so is $(\AT(P), \leq_\AT)$, by Lemma~\ref{chain-lemma}. Hence $\mathrm{sc.dim}(R) = |\AT(P)|$, by Theorem~\ref{EPspec}.
\end{proof}


\begin{corollary} \label{dense-cor}
Let $K$ be a countable field and $\kappa$ an infinite cardinal. Suppose there exists a linearly ordered structure $(B, \leq)$ and a dense set $\mathfrak{D} \subseteq B$ such that $|B|=\lambda$, $|\mathfrak{D}|=\kappa$, and $\lambda \geq \kappa$. Setting $R = L_K(\mathbf{E}_\mathfrak{D})$, we have $|R| = \kappa$ and $\mathrm{sc.dim}(R) \geq \lambda$.
\end{corollary}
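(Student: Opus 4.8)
The plan is to pass from the Leavitt path algebra back to the underlying order-theoretic data via the results already established, and then to realize the full size of $B$ inside $\AT(\mathfrak{D})$ by means of ``cuts.'' First I would observe that $\mathfrak{D}$, equipped with the order it inherits from $B$, is an infinite linearly ordered structure (it has cardinality $\kappa \geq \aleph_0$), and that $K$ is countable. Hence Proposition~\ref{A-Krull-prop} applies verbatim with $P := \mathfrak{D}$, yielding both $|R| = |\mathfrak{D}| = \kappa$ (which settles the cardinality assertion) and $\mathrm{sc.dim}(R) = |\AT(\mathfrak{D})|$. Everything therefore reduces to the cardinal inequality $|\AT(\mathfrak{D})| \geq \lambda = |B|$.

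To prove this, I would exhibit an injection of $B$ (minus at most one point) into $\AT(\mathfrak{D}) \setminus \mathfrak{D}$. For each $b \in B$, set $S_b := \{d \in \mathfrak{D} \mid b < d\}$. In a linearly ordered set every nonempty subset is downward directed, so the only thing to verify before $x_{[S_b]}$ can be declared an element of $\AT(\mathfrak{D})$ is that $S_b$ has no least element. This is exactly where density enters: were $d_0$ a least element of $S_b$, applying the density of $\mathfrak{D}$ in $B$ to the pair $b < d_0$ would produce $d \in \mathfrak{D}$ with $b < d < d_0$, that is, $d \in S_b$ with $d < d_0$, contradicting minimality. The $b$ for which $S_b = \varnothing$ are precisely the upper bounds of $\mathfrak{D}$ in $B$, and density again forces there to be at most one such (two distinct comparable upper bounds would sandwich a point of $\mathfrak{D}$ strictly between them). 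Thus $b \mapsto x_{[S_b]}$ is defined for all but at most one $b \in B$, with values in $\AT(\mathfrak{D}) \setminus \mathfrak{D}$.

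For injectivity, take $b_1 < b_2$ in $B$ with $S_{b_1}, S_{b_2}$ nonempty; I claim $S_{b_1} \not\approx S_{b_2}$, so that $x_{[S_{b_1}]} \neq x_{[S_{b_2}]}$. By density choose $d \in \mathfrak{D}$ with $b_1 < d < b_2$, so $d \in S_{b_1}$. If $S_{b_2} \preceq S_{b_1}$ held, then applying Definition~\ref{precdef} to this $d \in S_{b_1}$ would yield some $s \in S_{b_2}$ with $s \leq d$; but every $s \in S_{b_2}$ satisfies $s > b_2 > d$, a contradiction. Hence $S_{b_2} \not\preceq S_{b_1}$, so $S_{b_1} \not\approx S_{b_2}$, and the assignment is injective. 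Since $\lambda$ is infinite, discarding at most one point of $B$ leaves its cardinality unchanged, so $|\AT(\mathfrak{D})| \geq |B| = \lambda$; together with the first paragraph this gives $\mathrm{sc.dim}(R) \geq \lambda$.

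The only real subtlety is the ``no least element'' clause in the definition of $\AT$ (Definition~\ref{leqAdef}): it is simultaneously what makes the cuts $S_b$ legitimate indices for elements of $\AT(\mathfrak{D})$ and what makes distinct $b$ produce $\approx$-inequivalent sets, and both facts rest squarely on the density of $\mathfrak{D}$ in $B$. The boundary phenomenon (at most one upper bound of $\mathfrak{D}$) is harmless precisely because $\lambda \geq \aleph_0$.
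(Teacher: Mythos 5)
Your proof is correct and follows essentially the same route as the paper's: reduce via Proposition~\ref{A-Krull-prop} to showing $|\AT(\mathfrak{D})| \geq \lambda$, then inject points of $B$ into $\AT(\mathfrak{D}) \setminus \mathfrak{D}$ via the cuts $S_b = \{d \in \mathfrak{D} \mid b < d\}$, using density both to rule out a least element of $S_b$ and to separate $S_{b_1} \not\approx S_{b_2}$. The only cosmetic difference is that the paper injects just $B \setminus \mathfrak{D}$ (handling a possible greatest element by a without-loss-of-generality assumption) and counts $\mathfrak{D} \subseteq \AT(\mathfrak{D})$ separately, whereas you map all of $B$ minus at most one upper bound of $\mathfrak{D}$; both yield $|\AT(\mathfrak{D})| \geq \lambda$.
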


\begin{proof}
By Proposition~\ref{A-Krull-prop}, it is enough to show that $|\AT(\mathfrak{D})| \geq \lambda$. To prove this inequality, we shall construct an injective function $\phi : B\setminus \mathfrak{D} \to \AT(\mathfrak{D}) \setminus \mathfrak{D}$. Without loss of generality, we may assume that if $B$ has a greatest element, then it belongs to $\mathfrak{D}$.

Given $r \in B \setminus \mathfrak{D}$, let $S_r:=\{p \in \mathfrak{D} \mid r < p\}$. Since $r$ is not a greatest element, by hypothesis, $S_r \neq \varnothing$. Thus $S_r$ is downward directed (a chain, actually, since it is a subset of a chain) and has no least element, since $\mathfrak{D}$ is dense in $B$. Therefore $x_{[S_r]} \in \AT(\mathfrak{D}) \setminus \mathfrak{D}$, and hence $r\mapsto x_{[S_r]}$ defines a function $\phi : B\setminus \mathfrak{D} \to \AT(\mathfrak{D}) \setminus \mathfrak{D}$. To show that $\phi$ is injective, let $r,p \in B \setminus \mathfrak{D}$ be distinct. Without loss of generality, we may assume that $r < p$. Then there exists $q \in \mathfrak{D}$ such that $r < q < p$, since $\mathfrak{D}$ is dense in $B$. Therefore $q \in S_r \setminus S_p$, which implies that $S_r \prec S_p$. Thus $x_{[S_r]} <_\AT x_{[S_p]}$, and in particular, $x_{[S_r]} \neq x_{[S_p]}$.
\end{proof}

Our next corollary is a Leavitt path algebra analogue of Proposition \ref{prop1}(2).

\begin{corollary}
Let $K$ be a countable field and $R:=L_K(\mathbf{E}_{\Q})$, where $\Q$ is the set of the rational numbers with the usual ordering $\leq$. Then $|R| = \aleph_0$ and $\mathrm{sc.dim}(R) = 2^{\aleph_0}$.
\end{corollary}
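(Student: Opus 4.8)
The plan is to apply Corollary~\ref{dense-cor} with $\mathfrak{D} = \Q$, so the main task reduces to exhibiting a linearly ordered set $(B,\leq)$ that contains $\Q$ as a dense subset and has the right cardinalities. First I would take $B := \R$ with its usual ordering, and $\mathfrak{D} := \Q$. It is a standard fact that $\Q$ is dense in $\R$: for any two reals $x < y$ there is a rational strictly between them. Moreover $|\mathfrak{D}| = |\Q| = \aleph_0$ and $|B| = |\R| = 2^{\aleph_0}$, so setting $\lambda := 2^{\aleph_0}$ and $\kappa := \aleph_0$ we have $\lambda \geq \kappa$, and all the hypotheses of Corollary~\ref{dense-cor} are met.

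Applying Corollary~\ref{dense-cor} to $R = L_K(\mathbf{E}_{\mathfrak{D}}) = L_K(\mathbf{E}_{\Q})$ then yields immediately that $|R| = \kappa = \aleph_0$ and $\mathrm{sc.dim}(R) \geq \lambda = 2^{\aleph_0}$. To finish I would produce the reverse inequality $\mathrm{sc.dim}(R) \leq 2^{\aleph_0}$. Since $|R| = \aleph_0$, Observation~\ref{obs1} gives $\mathrm{c.dim}(R) \leq 2^{|R|} = 2^{\aleph_0}$, and as $\mathrm{sc.dim}(R) = \mathrm{c.dim}(R)$ whenever the former exists (by Definition~\ref{def00}), the two bounds combine to give $\mathrm{sc.dim}(R) = 2^{\aleph_0}$ exactly. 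Here I am using that Corollary~\ref{dense-cor} already asserts the \emph{strong} cardinal Krull dimension exists (it produces a genuine chain of prime ideals, via Proposition~\ref{A-Krull-prop} and Lemma~\ref{chain-lemma}), so the upper bound on $\mathrm{c.dim}$ transfers to $\mathrm{sc.dim}$.

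I expect no serious obstacle in this argument: the density of $\Q$ in $\R$ and the cardinality computations are entirely routine, and the heavy lifting has been done in Corollary~\ref{dense-cor} and the machinery preceding it. The only point requiring a little care is confirming that the one-sided conclusion $\mathrm{sc.dim}(R) \geq \lambda$ of Corollary~\ref{dense-cor} can be promoted to an equality; this is precisely where Observation~\ref{obs1} supplies the matching upper bound, so the two results dovetail cleanly. In short, this corollary is a direct specialization of the general dense-embedding result to the archetypal dense pair $(\Q, \R)$.
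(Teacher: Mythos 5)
Your proposal is correct and follows essentially the same route as the paper: both deduce $|R|=\aleph_0$ and the lower bound $\mathrm{sc.dim}(R)\geq 2^{\aleph_0}$ by applying Corollary~\ref{dense-cor} to the dense pair $\Q\subseteq\R$. The only (immaterial) difference is the matching upper bound, which the paper gets from Proposition~\ref{A-Krull-prop} via $\mathrm{sc.dim}(R)=|\AT(\Q)|\leq|\Q|+2^{|\Q|}=2^{\aleph_0}$, whereas you invoke Observation~\ref{obs1} together with the---correctly justified---existence of $\mathrm{sc.dim}(R)$ (which holds since $\mathrm{Spec}$ is a chain here, by Lemma~\ref{chain-lemma} and Theorem~\ref{EPspec}); both are one-line arguments.
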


\begin{proof}
Setting $B:=\R$ and $P:=\Q$, Corollary~\ref{dense-cor} implies that $|R| = \aleph_0$ and $\mathrm{sc.dim}(R) \geq 2^{\aleph_0}$. The desired conclusion now follows from the fact that, by Proposition~\ref{A-Krull-prop}, $|\mathrm{sc.dim}(R)| = |\AT(\Q)| \leq |\Q| + 2^{|\Q|} = 2^{\aleph_0}$. 
\end{proof}

Combining Corollary~\ref{dense-cor} with Lemma~\ref{1001}, we obtain our final theorem.

\begin{theorem}\label{thm7}
Let $K$ be a countable field and $\kappa$ an infinite cardinal. Suppose there exists a set of cardinality $\kappa$ whose power set contains a linearly ordered subset of cardinality $\lambda \geq \kappa$. Then there exists a graph $\mathbf{E}$ such that $R:=L_K(\mathbf{E})$ satisfies $|R| = \kappa$ and $\mathrm{sc.dim}(R) \geq \lambda$.
\end{theorem}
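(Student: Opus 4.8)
The plan is to reduce this statement to the already-established Corollary~\ref{dense-cor} by converting the hypothesis about a linearly ordered subset of a power set into the hypothesis about a dense subset of a linear order that Corollary~\ref{dense-cor} requires. The hypothesis gives us a set $T$ of cardinality $\kappa$ together with a chain $\mathcal{C} \subseteq \mathcal{P}(T)$ of cardinality $\lambda \geq \kappa$. This is precisely the right-hand side of the biconditional in Lemma~\ref{1001}.

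First I would invoke Lemma~\ref{1001} with the roles of $\kappa$ and $\lambda$ as in the present statement: since $\kappa$ is infinite and $\lambda \geq \kappa$, and since we have a set $T$ of cardinality $\kappa$ carrying a chain $\mathcal{C} \subseteq \mathcal{P}(T)$ of cardinality $\lambda$, the lemma furnishes a linearly ordered set $\mathbf{B} := (B, \leq)$ together with a dense subset $\mathfrak{D} \subseteq B$ satisfying $|B| = \lambda$ and $|\mathfrak{D}| = \kappa$. This is exactly the data required as input to Corollary~\ref{dense-cor}.

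Next I would apply Corollary~\ref{dense-cor} to this $(B, \leq)$ and $\mathfrak{D}$, taking the graph to be $\mathbf{E} := \mathbf{E}_\mathfrak{D}$ and $R := L_K(\mathbf{E}_\mathfrak{D})$. Since $\lambda \geq \kappa$, the hypotheses of Corollary~\ref{dense-cor} are met, and it yields directly that $|R| = \kappa$ and $\mathrm{sc.dim}(R) \geq \lambda$. This completes the argument.

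I do not expect any serious obstacle here, since both Lemma~\ref{1001} and Corollary~\ref{dense-cor} have already been proved in the excerpt and the theorem is essentially their composition; the only point requiring minor care is verifying that the cardinal inequality $\lambda \geq \kappa$ is transported correctly between the two results (both state their hypotheses in the form $\lambda \geq \kappa$, so they align without any rearrangement), and that the countability of the field $K$ — needed by Corollary~\ref{dense-cor} via Proposition~\ref{A-Krull-prop} — is assumed in the theorem statement as well.
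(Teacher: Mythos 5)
Your proposal is correct and matches the paper's own argument exactly: the paper introduces Theorem~\ref{thm7} with the remark that it follows by ``combining Corollary~\ref{dense-cor} with Lemma~\ref{1001},'' which is precisely your reduction (the converse direction of Lemma~\ref{1001} turns the chain in $\mathcal{P}(T)$ into a linear order $B$ of cardinality $\lambda$ with dense subset $\mathfrak{D}$ of cardinality $\kappa$, and Corollary~\ref{dense-cor} applied to $R:=L_K(\mathbf{E}_\mathfrak{D})$ finishes). Your side remarks about the alignment of the inequality $\lambda\geq\kappa$ and the countability of $K$ are also the right points of care, and both check out.
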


An application of Corollary \ref{condensed} and Observation \ref{obs1} yields the following corollary:

\begin{corollary}
Let $K$ be a countable field and $\kappa$ an infinite cardinal. 
\begin{enumerate}
\item There exists a graph $\mathbf{E}$ such that $R:=L_K(\mathbf{E})$ satisfies $|R| = \kappa$ and $\mathrm{sc.dim}(R) \geq \kappa^+$.
\item Assuming GCH, there exists a graph $\mathbf{E}$ such that $R:=L_K(\mathbf{E})$ satisfies $|R| = \kappa$ and $\mathrm{sc.dim}(R) = 2^\kappa$.
\item If $\kappa$ is a PSL cardinal, then there exists a graph $\mathbf{E}$ such that $R:=L_K(\mathbf{E})$ satisfies $|R| = \kappa$ and $\mathrm{sc.dim}(R) = 2^\kappa$.
\end{enumerate}
\end{corollary}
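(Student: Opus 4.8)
The plan is to derive each of the three parts as a direct application of Theorem~\ref{thm7}, which requires only that we exhibit, for the relevant $\lambda$, a set of cardinality $\kappa$ whose power set contains a linearly ordered (under $\subseteq$) subset of cardinality $\lambda \geq \kappa$. The bridge to set theory is Corollary~\ref{condensed}, which already produces exactly such chains of subsets; the only extra ingredients are the upper bound $\mathrm{c.dim}(R) \leq 2^{|R|}$ from Observation~\ref{obs1} and the trivial fact that a chain of $\lambda$ subsets of a $\kappa$-set witnesses $\lambda \geq \kappa$ when $\lambda \geq \kappa$ is assumed.

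First I would prove (1). By Corollary~\ref{condensed}(1), there is a set $X$ of cardinality $\kappa$ together with a chain of $\kappa^+$ subsets of $X$. Since $\kappa^+ \geq \kappa$, the hypothesis of Theorem~\ref{thm7} is met with $\lambda = \kappa^+$, so Theorem~\ref{thm7} yields a graph $\mathbf{E}$ with $R := L_K(\mathbf{E})$ satisfying $|R| = \kappa$ and $\mathrm{sc.dim}(R) \geq \kappa^+$. That is precisely the claim, and no upper bound is needed here. For (2), I would assume GCH and invoke Corollary~\ref{condensed}(2) to obtain a set $X$ of cardinality $\kappa$ with a chain of $2^\kappa$ subsets; applying Theorem~\ref{thm7} with $\lambda = 2^\kappa \geq \kappa$ gives a graph $\mathbf{E}$ with $|R| = \kappa$ and $\mathrm{sc.dim}(R) \geq 2^\kappa$. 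To upgrade the inequality to equality, I would apply Observation~\ref{obs1}: $\mathrm{c.dim}(R) \leq 2^{|R|} = 2^\kappa$, so in fact $\mathrm{sc.dim}(R) = 2^\kappa$. Part (3) is identical in structure, replacing the GCH hypothesis with ``$\kappa$ is PSL'' and Corollary~\ref{condensed}(2) with Corollary~\ref{condensed}(3), which again furnishes a chain of $2^\kappa$ subsets of a $\kappa$-set; Theorem~\ref{thm7} gives $\mathrm{sc.dim}(R) \geq 2^\kappa$, and Observation~\ref{obs1} pins it down to equality.

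There is essentially no obstacle here: the corollary is a packaging result in which all the genuine mathematical content has been pushed into Theorem~\ref{thm7} (the Leavitt path algebra realization) and into Corollary~\ref{condensed} (the set-theoretic construction of long chains of subsets). The one point requiring a moment's care is the passage from the ``$\geq$'' delivered by Theorem~\ref{thm7} to the ``$=$'' asserted in parts (2) and (3); this is handled uniformly by the cheap upper bound $2^{|R|} = 2^\kappa$ of Observation~\ref{obs1}, and it is worth noting that this upper bound applies to $\mathrm{c.dim}$ (hence to $\mathrm{sc.dim}$, which coincides with $\mathrm{c.dim}$ whenever it exists) precisely because $\mathrm{sc.dim}(R)$ is guaranteed to exist by Theorem~\ref{thm7}'s construction. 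Part (1) needs no such bound, since $\kappa^+$ is only claimed as a lower bound for the dimension.
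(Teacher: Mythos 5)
Your proof is correct and follows exactly the route the paper takes: the paper offers no written proof beyond the remark that the corollary is ``an application of Corollary \ref{condensed} and Observation \ref{obs1}'' to Theorem \ref{thm7}, which is precisely your argument (parts of Corollary \ref{condensed} supplying the chains of subsets, Theorem \ref{thm7} realizing them in a Leavitt path algebra, and Observation \ref{obs1} upgrading $\geq 2^\kappa$ to equality in parts (2) and (3)). Your side remark that $\mathrm{sc.dim}(R)$ is guaranteed to exist by the construction behind Theorem \ref{thm7} (via Proposition \ref{A-Krull-prop} and Lemma \ref{chain-lemma}) is the right justification for applying the $\mathrm{c.dim}$ bound to $\mathrm{sc.dim}$.
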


Finally, we use Leavitt path algebras to construct rings with cardinal Krull dimension of every limit cardinality but without strong cardinal Krull dimension.

\begin{proposition}\label{berry1}
Let $K$ be a field and $\kappa$ a nonzero limit cardinal. Then there exists a graph $\mathbf{E}$ such that $R:=L_K(\mathbf{E})$ has cardinal Krull dimension $\kappa$ but does not have strong cardinal Krull dimension.
\end{proposition}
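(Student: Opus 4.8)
The plan is to exploit the machinery built up in this section, specifically Proposition~\ref{A-Krull-prop}, which reduces the problem to finding a suitable linearly ordered set $(P,\leq)$. By that proposition, if $K$ is a countable field and $(P,\leq)$ is an infinite chain, then $R:=L_K(\mathbf{E}_\mathbf{P})$ has a \emph{strong} cardinal Krull dimension equal to $|\AT(P)|$. Since we instead want a ring \emph{without} strong cardinal Krull dimension, I would first observe that Proposition~\ref{A-Krull-prop} requires $K$ countable only to pin down $|R|$; the crucial structural fact, that $(\Spec(L_K(\mathbf{E}_\mathbf{P})),\subseteq)\cong(\AT(P),\leq_\AT)$ with $\AT(P)$ a chain (Theorem~\ref{EPspec} and Lemma~\ref{chain-lemma}), holds for any field $K$. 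The whole problem therefore transfers to order theory: I must find a linearly ordered set $(P,\leq)$ such that the associated chain $(\AT(P),\leq_\AT)$ has supremum of its subchain-lengths equal to $\kappa$, yet contains no subchain of cardinality exactly $\kappa$.

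The key idea is to build $P$ as a concatenation of chains whose sizes approach $\kappa$ from below. Since $\kappa$ is a limit cardinal, I can fix a strictly increasing sequence $\langle \kappa_i \mid i<\mathrm{cf}(\kappa)\rangle$ of cardinals cofinal in $\kappa$, each $\kappa_i<\kappa$, and for each $i$ take a chain $L_i$ of size $\kappa_i$ (for instance a well-ordered set of that cardinality). Then I would let $P:=\star_{i<\mathrm{cf}(\kappa)}L_i$ be their concatenation, following the notation introduced earlier in the paper, so that $P$ is itself a chain of cardinality $\sup_i\kappa_i=\kappa$. The point of using a concatenation indexed by $\mathrm{cf}(\kappa)$, rather than a single chain of size $\kappa$, is that since each block $L_i$ has size $\kappa_i<\kappa$, any subchain of $P$ (and, I will need to check, of $\AT(P)$) that meets only finitely or boundedly many blocks has size $<\kappa$, while the full chain has size exactly $\kappa$. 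Because $\kappa$ is a limit cardinal, $\mathrm{cf}(\kappa)<\kappa$ as well, so the index set is small enough not to inflate the cardinalities unexpectedly. I must verify that $\mathrm{c.dim}(R)=\kappa$ by checking both that every chain of primes has length $<\kappa$ (so no chain realizes $\kappa$, giving the failure of strong cardinal Krull dimension) and that chains of length $\kappa_i$ exist for each $i$ (so the supremum is exactly $\kappa$).

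The hardest part, and the step I would scrutinize most carefully, is controlling the passage from $P$ to $\AT(P)$: the adjunction of the ideal points $x_{[S]}$ could in principle create a chain of size $\kappa$ even though $P$ itself has none. I would handle this by bounding $|\AT(P)|$ and, more delicately, by analyzing where the new points $x_{[S]}$ sit. Each $x_{[S]}$ arises from a downward-directed $S\subseteq P$ with no least element, which (since $P$ is a chain) is essentially a cut; I would argue that such cuts correspond to ``gaps'' in the concatenation structure and that incorporating them does not produce a single totally ordered subset of cardinality $\kappa$. Concretely, I expect every subchain of $\AT(P)$ to be contained in the union of $P$ with a controlled set of gap-points, each block contributing at most $\kappa_i$ elements plus a bounded number of cuts, so that any subchain avoiding $\kappa$-many blocks stays below $\kappa$ in size, and any subchain meeting cofinally many blocks still fails to have a single cardinality-$\kappa$ realization because the blocks are strictly increasing and there is no top block. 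An alternative, possibly cleaner, route would be to choose $P$ so that $\AT(P)$ is order-isomorphic to a chain built directly to have cofinality-$\mathrm{cf}(\kappa)$ structure and no subset of order type realizing $\kappa$; in either approach, the crux is the interaction between the gap-completion $\AT(-)$ and the cardinality $\kappa$, and verifying that the completion does not secretly supply the missing length-$\kappa$ chain is where the real work lies.
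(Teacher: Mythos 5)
Your construction cannot work, and the obstruction is visible already in the results you cite. You take $P$ to be the concatenation $\star_{i<\mathrm{cf}(\kappa)}L_i$, which, as you yourself note, is a chain of cardinality $\kappa$. But $P$ embeds into $\AT(P)$, so $\AT(P)$ contains a totally ordered subset of size $\kappa$; by Theorem~\ref{EPspec} the spectrum of $L_K(\mathbf{E}_\mathbf{P})$ then contains a chain of $\kappa$ prime ideals, which is exactly the strong cardinal Krull dimension you are trying to avoid. More globally, your framing --- find a linearly ordered $(P,\leq)$ such that $\AT(P)$ has supremum of subchain-lengths $\kappa$ but no subchain of cardinality $\kappa$ --- is self-defeating: when $P$ is linear, Lemma~\ref{chain-lemma} makes $\AT(P)$ itself a chain, so the supremum is always attained by $\AT(P)$ as a whole, and indeed Proposition~\ref{A-Krull-prop} says outright that $\mathrm{sc.dim}(R)=|\AT(P)|$ \emph{exists} for every infinite chain $P$. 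No amount of care with the gap-points $x_{[S]}$, which you correctly identify as delicate but which is not actually where the problem lies, can rescue a linearly ordered $P$. (A secondary error: your claim that $\kappa$ a limit cardinal implies $\mathrm{cf}(\kappa)<\kappa$ is not a theorem of ZFC; it fails for weakly inaccessible cardinals, whose nonexistence ZFC cannot prove.)

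The missing idea is to abandon linearity of $P$ altogether. The paper takes $P:=\bigsqcup_{\lambda<\kappa}P_\lambda$, a disjoint union of well-ordered sets with $|P_\lambda|=\lambda$ for each cardinal $\lambda<\kappa$, ordered so that elements in \emph{different} blocks are incomparable. Two things then happen at once: (i) every downward directed subset of $P$ lies inside a single block (by incomparability across blocks) and, being a subset of a well-order, has a least element --- so no points $x_{[S]}$ are adjoined and $\AT(P)=P$, which eliminates precisely the gap-completion problem you flagged as the hardest step; and (ii) every chain in $P\cong\Spec(L_K(\mathbf{E}_\mathbf{P}))$ is contained in some $P_\lambda$ and hence has size at most $\lambda<\kappa$, while the blocks themselves realize every cardinal $\lambda<\kappa$, giving $\mathrm{c.dim}(R)=\kappa$ with no chain of length $\kappa$. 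Note that this argument works for an arbitrary field $K$ and requires no cofinality analysis at all.
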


\begin{proof}
For each cardinal $\lambda < \kappa$ let $(P_\lambda, \leq_\lambda)$ be a well-ordered set such that $|P_\lambda| = \lambda$. Also let $P:=\bigsqcup_{\lambda < \kappa} P_\lambda$ (the disjoint union of the $P_\lambda$), and define a partial order $\leq$ on $P$ by letting $s \leq t$, for all $s,t \in P$, whenever $s,t \in P_\lambda$ and $s \leq_\lambda t$ for some $\lambda < \kappa$. Then every downward directed subset of $P$ is a subset of some $P_\lambda$, and hence has a least element, since $\leq_\lambda$ is a well-order. It follows that $(P, \leq) =  (\AT(P), \leq_\AT)$, and hence $(\Spec(L_K(\mathbf{E}_\mathbf{P})), \subseteq) \cong (P, \leq)$, by Theorem~\ref{EPspec}. Therefore, for each cardinal $\lambda < \kappa$, $R = L_K(\mathbf{E}_\mathbf{P})$ has a chain of prime ideals of cardinality $\lambda$ (corresponding to $P_\lambda$), but no chains of prime ideals of cardinality $\kappa$. Hence $\mathrm{c.dim} (R) = \mathrm{sup} \{\lambda \mid \lambda < \kappa\} = \kappa$, but $R$ does not have strong cardinal Krull dimension. \end{proof}

\begin{remark} Given an infinite field $F$ and a graph $\mathbf{E}$, it can be shown that the ideal extension $E(F, L_K(\mathbf{E}))$ of $F$ by $L_K(\mathbf{E})$ is a unital ring, having the same cardinality as $L_K(\mathbf{E})$, such that there is a one-to-one inclusion-preserving correspondence between the prime ideals of $L_K(\mathbf{E})$ and the prime ideals of $E(F, L_K(\mathbf{E}))$, except that $E(F,L_K(\mathbf{E}))$ has an additional maximal ideal. Thus the conclusions of the last three results hold with $E(F, L_K(\mathbf{E}))$ in place of $L_K(\mathbf{E})$. \end{remark}

\begin{center}\noindent\textbf{\textit{Acknowledgments}}\end{center}

The authors warmly thank many mathematicians who contributed to the results presented in this paper. First, we are grateful to John Griesmer and Noah Schweber for pointing us to the reference \cite{KK} which played a pivotal role in the proof of Proposition \ref{forceprop}. We also acknowledge Joel David Hamkins' comments on an older question posted on \texttt{http://www.mathoverflow.net} which were of utility to us in constructing the proof of Lemma \ref{1001}. Be'eri Greenfeld's feedback on an older version of this manuscript inspired the proofs of Theorem \ref{anothernew}(2) and Proposition \ref{berry1}. We thank ``n\'ombre" for ideas which were a big help to us in constructing the proof of Proposition \ref{prop4'}. Finally, we are grateful to the referee for pointing us to relevant literature and for comments which improved the readability of this article.

\bibliographystyle{plain}

\begin{thebibliography}{10}

\bibitem{ASurvey}  
G.~Abrams, 
{\it Leavitt path algebras: the first decade},  
Bull. Math. Sci. \textbf{5}(1) (2015) 59--120. 

\bibitem{AAMS} 
G.~Abrams, G.~Aranda Pino, Z.~Mesyan, C.~Smith, 
{\it Realizing posets as prime spectra of Leavitt path algebras}, 
J. Algebra, \textbf{476} (2017), 267--296.

\bibitem{AB}
A.~Bailey,
{\it Rings whose Krull dimensions are larger than their cardinalities},
Ph.D. Thesis, University of Idaho, 1999.

\bibitem{AC}
A.~Chernikov, I.~Kaplan, S.~Shelah,
{\it On non-forking spectra},
J. Eur. Math. Soc. (JEMS) \textbf{18} (2016), no. 12, 2821--2848. 

\bibitem{DD0}
D.~Dobbs,
{\it On chains of overrings of an integral domain},
Commutative rings, 95--101, Nova Sci. Publ., Hauppauge, NY, 2002.

\bibitem{DD}
D.~Dobbs,
{\it On the maximal cardinality of an infinite chain of vector subspaces},
Int. Electron. J. Algebra \textbf{13} (2013), 63--68.

\bibitem{DD2}
D.~Dobbs, R.~Heitmann,
{\it Realizing infinite cardinal numbers via maximal chains of intermediate fields},
Rocky Mountain J. Math. \textbf{44} (2014), no. 5, 1471--1503. 

\bibitem{DD3}
D.~Dobbs, B.~Mullins,
{\it On the lengths of maximal chains of intermediate fields in a field extension},
Comm. Algebra \textbf{29} (2001), no. 10, 4487--4507. 

\bibitem{DE}
D.~Eisenbud,
{\it Commutative algebra. With a view toward algebraic geometry}.
Graduate Texts in Mathematics, 150. Springer-Verlag, New York, 1995.

\bibitem{RF}
R.~Fra\"iss\'e,
{\it Theory of relations. Translated from the French}.
Studies in Logic and the Foundations of Mathematics, 118. North-Holland Publishing Co., Amsterdam, 1986.

\bibitem{RG}
R.~Gilmer,
{\it Multiplicative ideal theory. Corrected reprint of the 1972 edition}.
Queen's Papers in Pure and Applied Mathematics, 90. Queen's University, Kingston, ON, 1992.

\bibitem{Goodearl} 
K.~Goodearl, 
{\it Leavitt path algebras and direct limits}, 
Contemp. Math. \textbf{480} (2009) 165--187.

\bibitem{KG}
K.~Goodearl, R.~Warfield,
{\it An introduction to noncommutative Noetherian rings. Second edition}.
London Mathematical Society Student Texts, 61. Cambridge University Press, Cambridge, 2004.

\bibitem{JR}
R.~Gordon, J.~Robson,
{\it Krull dimension}.
Memoirs of the American Mathematical Society, No. 133. American Mathematical Society, Providence, R.I., 1973.

\bibitem{RH}
R.~Hartshorne,
{\it Algebraic geometry}.
Graduate Texts in Mathematics, 52. Springer-Verlag, New York-Heidelberg, 1977.

\bibitem{TJ}
T.~Jech,
{\it Set theory. The third millennium edition, revised and expanded}.
Springer Monographs in Mathematics. Springer-Verlag, Berlin, 2003.

\bibitem{BK}
B.~Kang, H.~Park,
{\it Krull-dimension of the power series ring over a nondiscrete valuation domain is uncountable},
J. Algebra \textbf{378} (2013), 12--21. 

\bibitem{JK}
H.J.~Keisler,
{\it Six classes of theories},
J. Austral. Math. Soc. Ser. A \textbf{21} (1976), no. 3, 257--266.

\bibitem{PK}
P.~Komj\'ath, V.~Totik,
{\it Problems and theorems in classical set theory}. 
Problem Books in Mathematics. Springer, New York, 2006.

\bibitem{Krause}
G.~Krause,
{\it On the Krull-dimension of left noetherian left Matlis-rings}.
Math. Z. \textbf{118} (1970), 207--214. 

\bibitem{MK}
M.~Kumar,
{\it Valuations and rank of ordered abelian groups}, 
Proc. Amer. Math. Soc. \textbf{133} (2004), no. 2, 343--348.

\bibitem{KK}
K.~Kunen,
{\it Set theory. An introduction to independence proofs}.
Studies in Logic and the Foundations of Mathematics, 102. North-Holland Publishing Co., Amsterdam-New York, 1980.

\bibitem{LAM}
T.Y.~Lam,
{\it A first course in concommutative rings. Second edition}.
Graduate Texts in Mathematics, 131. Springer-Verlag, New York, 2001.

\bibitem{BM}
B.~McDonald,
{\it Finite rings with identity}.
Pure and Applied Mathematics, Vol. 28. Marcel Dekker, Inc., New York, 1974.

\bibitem{WM}
W.~Mitchell,
{\it Aronszajn trees and the independence of the transfer property}, 
Ann. Math. Logic \textbf{5} (1972/73), 21--46.

\bibitem{MN}
M.~Nagata,
{\it Local rings}.
Interscience Tracts in Pure and Applied Mathematics, No. 13. Interscience Publishers (a division of John Wiley \& Sons),  New York-London, 1962.

\bibitem{GO}
G.~Oman,
{\it Small and large ideals of an associative ring}, 
J. Algebra Appl. \textbf{13} (2014), no. 5, 1350151, 20 pp.

\end{thebibliography}

\end{document}